\documentclass[a4paper,11pt]{amsart}

\usepackage[latin1]{inputenc}

\usepackage{latexsym}
\usepackage{amssymb}
\usepackage{amsmath}
\usepackage{amsthm}
\usepackage[all]{xy}
\usepackage[dvips]{graphicx}


\newtheorem*{teos}{Theorem}

\newtheorem{teo}{Theorem}[section]
\newtheorem{lem}[teo]{Lemma}

\newtheorem{cor}[teo]{Corollary}
\theoremstyle{definition}

\newtheorem{dhef}[teo]{Definition}
\newtheorem{exe}[teo]{Example}
\newtheorem{oss}[teo]{Remark}

\newcommand{\epsi}{\varepsilon}
\newcommand{\lrg}{\longrightarrow}

\newcommand{\C}{\mathbb{C}}

\newcommand{\cepsi}{\mathbb{C}[\varepsilon]}

\newcommand{\shA}{\mathcal{A}}

\newcommand{\shO}{\mathcal{O}}
\newcommand{\shI}{\mathcal{I}}

\newcommand{\de}{\partial}
\newcommand{\debar}{\overline{\partial}}

\newcommand{\Spec}{\operatorname{Spec}}

\newcommand{\MC}{\operatorname{MC}}
\newcommand{\Def}{\operatorname{Def}}

\newcommand{\Hom}{\operatorname{Hom}}

\newcommand{\image}{\operatorname{Im}}
\newcommand{\coker}{\operatorname{coker}}
\newcommand{\ad}{\operatorname{ad}}
\newcommand{\contr}{{\mspace{1mu}\lrcorner\mspace{1.5mu}}}
\newcommand{\bl}{\boldsymbol{l}}

\newcommand{\Art}{\mathbf{Art}}
\newcommand{\Set}{\mathbf{Set}}

\newcommand{\mc}{\operatorname{MC}_{(h,g)}}
\newcommand{\cil}{\operatorname{C}_{(h,g)}}
\newcommand{\hil}{\operatorname{H}_{(h,g)}}
\begin{document}

\title {$L_\infty$-algebras and deformations of holomorphic maps}

\author{Donatella Iacono}

\address{\newline
SISSA-ISAS,\hfill\newline Via Beirut, 2-4, 34014 Trieste\\ Italy.}
\email{iacono@sissa.it}

\begin{abstract}

We construct the deformation functor associated with a pair of
morphisms of differential graded Lie algebras, and use it to study
infinitesimal deformations of   holomorphic maps of compact
complex manifolds. In particular, using $L_\infty$ structures, we
give an explicit description of the differential graded Lie
algebra that controls this problem.
\end{abstract}

\maketitle

\bigskip

\section{Introduction}

The aim of this paper is to develop some algebraic tools to study
infinitesimal deformations of holomorphic maps.

The  modern approach to   deformation  theory is via differential
graded Lie algebras (DGLA for short) or, in general, via
$L_\infty$-algebras.  A DGLA is a differential graded vector space
with a structure of graded Lie algebra, plus some   compatibility
conditions between the differential and the bracket (of the Lie
structure).

Moreover, using the solutions of the Maurer-Cartan equation and
the gauge equivalence, we can associate with a DGLA $L$ a
deformation functor of Artin rings $\Def_L$,  i.e., a functor from
the category $\Art$ of local Artinian $\C$-algebras (with residue
field $\C$) to the category $\Set$ of sets, that satisfies
Schlessinger's conditions $(H_1)$ and $(H_2)$ of
\cite[Theorem~2.1]{bib Artin}.

The guiding principle is the idea  due to P.~Deligne, V.~Drinfeld,
D.~Quillen and M.~Kontsevich (see \cite{bib kontsevich}) that \lq
\lq in characteristic zero  every deformation problem is
controlled by a differential graded Lie algebra".

In other words, we can define a DGLA $L$ (up to quasi-isomorphism)
from the geometrical data of the problem, such that the
deformation functor $\Def_L$ is isomorphic to the deformation
functor of Artin rings that describes the formal deformations of
the geometric object \cite{bib Artin}, \cite{bib Grothendieck} and
\cite{bib Schlessinger}. We point out  that it is easier to study
a deformation functor associated with a DGLA but, in general,  it
is not an easy task to find the right DGLA (up to
quasi-isomorphism) associated with the problem \cite{bib
kontsevich NOTE}.

A first example, in which the associated DGLA is well understood,
is the case of deformations of  complex manifolds. If $X$ is a
complex compact manifold, then its Kodaira-Spencer algebra
controls the infinitesimal deformations of $X$ (Theorem~\ref{teo
def_k =Def_X}).

The next natural problem is to investigate the embedded
deformations of a submanifold in a fixed manifold. Very recently,
M.~Manetti  in \cite{bib ManettiPREPRINT} studies this problem
using the approach via DGLA. More precisely, given a morphism of
DGLAs $ h:L \lrg M$, he describes a general construction to define
a new deformation functor $\Def_h$ associated with $ h$
(Remark~\ref{oss DEf(hg) reduces to DEF_h e DEF L}). Then, by
suitably choosing $L,M$ and $h$ he proves the existence of an
isomorphism between the functor $\Def_h$  and the functor
associated with the infinitesimal deformations of a submanifold in
a fixed manifold.

\smallskip

In this paper, we extend these techniques to study not only the
deformations of an inclusion but, in general, the deformations of
holomorphic maps.

These deformations were first studied from the classical point of
view (no DGLA) by E.~Horikawa  \cite{bib HorikawaI} and \cite{bib
Horikawa III}, then by M.~Namba  \cite{bib Namba},  Z.~Ran
\cite{bib RAn mappe} and, more recently, by E.~Sernesi \cite{bib
Sernesi}.

Roughly speaking, we have a holomorphic map $f:X \lrg Y$ of
compact complex manifolds and we deform both the domain, the
codomain and the map itself. Equivalently, we deform  the graph of
$f$ in the product $X \times Y$,  such that the deformation of
$X\times Y$ is a product of deformations of $X$ and $Y$. Let
$\Def(f)$ be the functor associated with the infinitesimal
deformations of the holomorphic map $f$ (Definition~\ref{def
funore DEF(f)}).

To study these deformations, the key point is the definition  of
the deformation functor $ \Def_{(h,g)} $,  associated with a pair
of morphisms of differential graded Lie algebras $h :L \lrg M$ and
$g: N \lrg M$. In particular, the tangent and obstruction spaces
of $\Def_{(h,g)} $  are the first and second cohomology group of
the suspension of the mapping cone $C^\cdot_{(h,g)}$, associated
with the morphism $h-g:L \oplus N \lrg M$, such that
$(h-g)(l,n)=h(l)-g(n)$ (Section \ref{sez tangent e ostru MC(h,g)
DEF(h,g)}).

By a suitable choice of the morphisms   $h :L \lrg M$ and $g: N
\lrg M $, the functor
 $\Def_{(h,g)}$ encodes all the geometric data of the
problem of infinitesimal deformations of holomorphic maps
(Theorem~\ref{teo Def_(h,g)  Def (f)}).

\begin{teos}[A]
Let $f:X \lrg Y$ be a holomorphic map of compact complex
manifolds. There exist morphisms of DGLAs  $h :L \lrg M$ and $g: N
\lrg M $ such that
$$
\Def_{(h,g)}\cong \Def (f).
$$
\end{teos}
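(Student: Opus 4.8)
The plan is to identify the right DGLAs $L$, $M$, $N$ and morphisms $h:L\to M$, $g:N\to M$ directly from the geometry of the graph of $f$ inside $X\times Y$, and then show the associated functor $\Def_{(h,g)}$ is isomorphic to $\Def(f)$ by comparing Maurer-Cartan solutions and gauge equivalences on both sides. Concretely, I would take $L$ and $N$ to be the Kodaira-Spencer DGLAs $A_X^{0,\cdot}(T_X)$ and $A_Y^{0,\cdot}(T_Y)$ controlling the deformations of the domain and codomain (as in Theorem~\ref{teo def_k =Def_X}), and $M$ to be the DGLA controlling the pair (deformation of $X\times Y$ as a product, together with the deformation of the graph of $f$ in it); this is exactly the kind of object appearing in Manetti's construction of $\Def_h$ for embedded deformations (Remark~\ref{oss DEf(hg) reduces to DEF_h e DEF L}). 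The two morphisms $h$ and $g$ should be the natural maps sending a deformation of $X$, respectively of $Y$, to the induced deformation of $X\times Y$ restricted near the graph; the combined morphism $h-g:L\oplus N\to M$ then records the constraint that the ambient deformation is a \emph{product} of the two factor deformations.

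The key steps, in order, would be: (1) make the choice of $M$ precise — most naturally as a sub-DGLA (or a cone-type DGLA) built from $A_{X\times Y}^{0,\cdot}(T_{X\times Y})$ together with the sheaf of vector fields tangent to the graph $\Gamma_f$, so that solutions of Maurer-Cartan in $M$ correspond to deformations of $\Gamma_f\subset X\times Y$; (2) verify that $h$ and $g$ are genuine morphisms of DGLAs, i.e.\ that they commute with differentials and brackets — this is a direct check once the pullback/pushforward identifications along the projections $X\times Y\to X$ and $X\times Y\to Y$ are written down; (3) unwind the definition of $\Def_{(h,g)}$ from Section~\ref{sez tangent e ostru MC(h,g) DEF(h,g)}: an element over an Artin ring $A$ is a triple consisting of a Maurer-Cartan element of $L\otimes\mathfrak m_A$, one of $N\otimes\mathfrak m_A$, and a gauge equivalence in $M\otimes\mathfrak m_A$ between their images under $h$ and $g$; (4) match this data with the definition of $\Def(f)$ (Definition~\ref{def funore DEF(f)}): a deformation of $X$, a deformation of $Y$, and a holomorphic map between them lifting $f$ — here the gauge element in $M$ is precisely the datum of the deformed map, i.e.\ the deformation of the graph compatible with the product structure; (5) check that the equivalence relations correspond, so that the bijection is natural in $A$ and hence an isomorphism of functors. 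I would also record the compatibility with tangent and obstruction spaces as a sanity check, since both are computed by $H^1$ and $H^2$ of the suspended mapping cone of $h-g$, which should match the classical description via $H^1(X,T_X)\oplus H^1(Y,T_Y)$ corrected by the hypercohomology of $T_X\to f^*T_Y$.

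The main obstacle I expect is step (1) together with step (4): pinning down $M$ so that its Maurer-Cartan elements really are deformations of the \emph{graph} inside a \emph{product} deformation, and then showing that a gauge equivalence between $h(l)$ and $g(n)$ is the same thing as a deformed holomorphic map $X_A\to Y_A$ over $A$. The subtlety is that a holomorphic map is not literally a subvariety, so one must pass through the graph and carefully check that the product structure on the ambient deformation is automatically respected — equivalently, that the component of $M$ coming from $A^{0,\cdot}(T_X)\oplus A^{0,\cdot}(T_Y)$ inside $A^{0,\cdot}(T_{X\times Y})$ is exactly the image of $h\oplus g$, so that no spurious deformations of $X\times Y$ leak in. Once the dictionary at the level of DGLAs is set up correctly, verifying that it induces an isomorphism on the level of deformation functors, including obstruction theory, should be formal and follow the pattern already established by Manetti for the embedded case.
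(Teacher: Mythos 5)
Your geometric picture (deform the graph in the product, constrain the ambient deformation to be a product) is the right one, but the way you distribute the data among the three slots of the diagram does not produce the correct functor, and this is the heart of the matter rather than a detail. You propose $L=KS_X$, $N=KS_Y$, and $M$ a DGLA whose Maurer--Cartan elements are deformations of $\Gamma_f\subset X\times Y$. Two things go wrong. First, in the functor $\MC_{(h,g)}$ the algebra $M$ enters \emph{only} through the gauge group $\exp(M^0\otimes m_A)$ and the relation $g(y)=e^p*h(x)$; Maurer--Cartan elements of $M$ are never part of the data. So designing $M$ so that $\MC_M$ is ``deformations of the graph'' aims at a slot that does not exist. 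Second, with $L=KS_X$ and $N=KS_Y$ the constraint $g(y)=e^p*h(x)$ asserts that the images of the $X$-deformation and the $Y$-deformation in $M$ are gauge equivalent; taking $x=0$ and $y$ any nontrivial deformation of $Y$ would then force $g(y)$ to be gauge-trivial in $M$, which excludes most elements of $\Def_X(A)\times\Def_Y(A)$ instead of parametrizing all of them. The correct assignment (the one the paper uses) is asymmetric: one leg is $N=KS_X\times KS_Y$ with $g=(p^*,q^*)$ into $M=KS_{X\times Y}$, so that the image of $g$ consists exactly of the \emph{product} deformations of $X\times Y$; the other leg is $L=\Gamma(\mathcal L)$, the forms with values in vector fields tangent to $\Gamma_f$, with $h$ the inclusion into $M$. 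Then $g(n)=e^m*h(l)$ says the product deformation $g(n)$ becomes, after the twist $e^m$, a Maurer--Cartan element preserving the graph, and the deformed map is recovered as the ideal sheaf $\shO_{Z_A}(g(n))\cap e^m(\ker i^*\otimes A)$. Your version has no mechanism for recovering the deformed map from the triple $(x,y,e^p)$.

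A secondary but real gap: even with the correct $(L,N,M,h,g)$, the isomorphism $\Def_{(h,g)}\cong\Def(f)$ is not ``formal and following Manetti's pattern.'' One must prove that the intersection sheaf is $A$-flat (done by localizing to Stein opens, where $H^1(L)=0$ forces $\Def_L$ to be trivial, so the candidate sheaf is a gauge translate of $\shI\otimes A$), and injectivity of the comparison map requires an induction on small extensions showing that the gauge element $a$ with $e^a(e^{-m}\shO_{Z_A}(g(n))\cap(\ker i^*\otimes A))\subseteq\ker i^*\otimes A$ actually lies in $L^0\otimes m_A$. These steps, together with the identification of the tangent and obstruction maps with the quasi-isomorphism onto Horikawa's complex $B^\cdot$, are the substance of the proof and should not be deferred as routine.
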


\smallskip

Next, we look for a  DGLA that controls the deformations of
holomorphic maps and, for this purpose, we use $L_{\infty}$
structures.

First, using path objects,  we define a differential graded Lie
algebra $H_{(h,g)}$, for each choice of morphisms $h :L \lrg M$
and $g:N\lrg M$. Then, by transferring $L_{\infty}$ structures, we
explicitly describe an $L_{\infty}$ structure on the cone
$C^\cdot_{(h,g)}$ (Section~\ref{sezio l infinito cono}). In
particular, the functor $\Def_{\hil}$ is isomorphic to  the
deformation functor $\Def^\infty_{\cil^\cdot} $ associated with
this $L_{\infty}$ structure on   $C^\cdot_{(h,g)}$
(Corollary~\ref{cor DEF inf cono iso DEF hil}).

Finally, we prove that the  deformation functor
$\Def^\infty_{\cil^\cdot} $  coincides with the deformation
functor $\Def_{(h,g)}$ associated with the pair $(h,g)$
(Theorem~\ref{teo Def INFI cil  = Def(h,g)}) and so
$\Def_{\hil}\cong \Def_{(h,g)}$ (Corollary~\ref{cor L infini
Def_(h,g)iso Def_H(h,g)}).

Therefore, in particular, we give  an explicit description (more
than the existence) of a DGLA that controls the deformations of
holomorphic maps (Theorem~\ref{teo esiste hil governa DEF(f)}).

\begin{teos}[B]
Let $f:X \lrg Y$ be a holomorphic map of compact complex manifold.
Then, there exists an explicit description of a DGLA $H_{(h,g)}$
such that
$$
\Def_{H_{(h,g)}}\cong \Def (f).
$$
\end{teos}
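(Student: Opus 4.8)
The plan is to assemble Theorem~B by composing the results that have already been announced in the introduction, so the proof is essentially a bookkeeping argument that threads together three isomorphisms of functors. First I would invoke Theorem~A (Theorem~\ref{teo Def_(h,g) Def (f)}): for the given holomorphic map $f:X \lrg Y$ there exist DGLAs $L, N, M$ and morphisms $h:L \lrg M$, $g:N \lrg M$ with $\Def_{(h,g)} \cong \Def(f)$. The natural choice, as in Manetti's treatment of embedded deformations, is to take $L$ and $N$ to be Kodaira--Spencer-type algebras of $X$ and $Y$ (the $A^{0,*}(T_X)$ and $A^{0,*}(T_Y)$ Dolbeault complexes) and $M$ the Dolbeault complex $A^{0,*}(f^*T_Y)$, with $h$ and $g$ induced by $f_*$ and $f^*$ respectively; I would simply cite Theorem~\ref{teo Def_(h,g) Def (f)} for the existence and isomorphism, rather than re-deriving it.

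Next I would bring in the chain of comparisons from the $L_\infty$ side. From these same $h$ and $g$, Section~\ref{sezio l infinito cono} (via path objects) produces the DGLA $H_{(h,g)}$, and Corollary~\ref{cor DEF inf cono iso DEF hil} gives $\Def_{H_{(h,g)}} \cong \Def^\infty_{C^\cdot_{(h,g)}}$, where the right-hand side is the deformation functor of the transferred $L_\infty$ structure on the suspended mapping cone. Then Theorem~\ref{teo Def INFI cil = Def(h,g)} identifies $\Def^\infty_{C^\cdot_{(h,g)}}$ with $\Def_{(h,g)}$ (equivalently, this is packaged as Corollary~\ref{cor L infini Def_(h,g)iso Def_H(h,g)}, which states $\Def_{H_{(h,g)}} \cong \Def_{(h,g)}$ directly). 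Composing: $\Def_{H_{(h,g)}} \cong \Def_{(h,g)} \cong \Def(f)$, which is exactly the assertion of Theorem~B. I would write this as a short two-line chain of isomorphisms with the three citations attached.

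The only genuine content to verify at this stage is \emph{consistency of the data}: the pair $(h,g)$ chosen to make Theorem~A work must be the same pair fed into the $H_{(h,g)}$ construction, so that the two chains meet. Since Theorem~A's proof exhibits a specific $(h,g)$ and the $L_\infty$ machinery of Sections~\ref{sezio l infinito cono} onward is developed for an arbitrary pair of DGLA morphisms to a common target, there is no obstruction here — one just observes that the geometric pair is an admissible input to the abstract construction. I expect the main (and essentially only) subtlety to be making sure the isomorphisms are natural in the Artin ring, so that they genuinely compose as natural transformations of functors on $\Art$; but each of the cited results is already stated as an isomorphism of functors, so naturality is inherited automatically. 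Hence Theorem~B follows formally, and the "explicit description" claim is justified because $H_{(h,g)}$ is built by an explicit path-object construction from the explicit Dolbeault DGLAs attached to $X$, $Y$ and $f$.
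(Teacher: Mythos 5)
Your overall strategy is exactly the paper's: Theorem~B is proved by composing $\Def_{H_{(h,g)}}\cong\Def_{(h,g)}$ (Corollary~\ref{cor L infini Def_(h,g)iso Def_H(h,g)}, itself obtained from Corollary~\ref{cor DEF inf cono iso DEF hil} and Theorem~\ref{teo Def INFI cil = Def(h,g)}) with $\Def_{(h,g)}\cong\Def(f)$ (Theorem~\ref{teo Def_(h,g) Def (f)}), and your remarks about feeding the same pair $(h,g)$ into both chains and about naturality are the right (and only) things to check.

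There is, however, one concrete error in your description of the data, and it matters because the whole point of Theorem~B is the \emph{explicit} description of $H_{(h,g)}$, which requires knowing $L$, $N$, $M$, $h$, $g$ correctly. You propose $L=A_X^{0,*}(\Theta_X)$, $N=A_Y^{0,*}(\Theta_Y)$, $M=A_X^{0,*}(f^*\Theta_Y)$ with $h=f_*$ and $g=f^*$. This cannot work: $A_X^{0,*}(f^*\Theta_Y)$ is only a complex, not a DGLA (there is no bracket on sections of a pulled-back bundle compatible with everything), so $f_*$ and $f^*$ are not morphisms of DGLAs, and the constructions of Sections~\ref{sezio def funct of pair morfisms} and \ref{sezio l infinito cono} do not apply to this triple. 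This is precisely why the paper routes everything through the product $Z=X\times Y$: the correct choice is $N=KS_X\times KS_Y$, $M=KS_{X\times Y}$, $g=(p^*,q^*)$, and $L\subset KS_{X\times Y}$ the subalgebra of forms tangent to the graph $\Gamma$ of $f$ (the kernel of $\pi:KS_{X\times Y}\to A_X^{0,*}(f^*\Theta_Y)$), with $h$ the inclusion. The complex $A_X^{0,*}(f^*\Theta_Y)$ only appears as $\coker(h)$, which is why Horikawa's complex $B^\cdot$ computes the tangent and obstruction spaces via Lemmas~\ref{lem h inj then C_(h,g)=C_pi dopo g} and \ref{lem isomor complesso Namba con Cono}, but it is not itself one of the DGLAs entering the pair $(h,g)$. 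With this correction your two-line composition is exactly the paper's proof.
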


\medskip

When we developed the techniques of this paper, we had also in
mind some applications to the study of obstruction theory.
However, since the number of pages grew, we decided to split the
material, collecting here the general theory and leaving for the
sequel \cite{bib articolo2 SEMIRe} (in preparation) the study of
obstructions and of semi-regularity maps that annihilates
obstructions.

\smallskip

{ \bf Acknowledgments.} It is a pleasure for me to show my deep
gratitude to the advisor of my PhD thesis Prof. Marco Manetti, an
excellent   helpful professor who supports and encourages me every
time. I'm indebted with him for many useful discussions, advices
and suggestions. Several ideas of this work are grown under his
influence. This paper was written at the Mittag-Leffler Institute
(Djursholm, Sweden),  during  my participation in the \emph{Moduli
Spaces} Program, Spring 2007. I am very grateful for the
hospitality and support received. I am also grateful to the referee for improvements in the presentation of the paper.

\section{Notation}

We will work over the field  $\C$ of complex numbers, although
most of the algebraic results are valid over an arbitrary field of
characteristic zero. All vector spaces, linear maps, tensor
products etc. are intended over $\C$.

Unless otherwise specified,  any (complex) manifold is assumed compact and connected.

Given a manifold $X$, we denote by $\Theta_X $ the holomorphic
tangent bundle, by $\shA_X^{p,q}$ the sheaf of differentiable
$(p,q)$-forms on $X$ and by $A_X^{p,q}=\Gamma(X,\shA_X^{p,q})$ the
vector space of   global sections of  $\shA_X^{p,q}$. More
generally, $\shA_X^{p,q}(\Theta_X)$ is the sheaf of differentiable
$(p,q)$-forms on $X$ with values in  $\Theta_X $ and
$A_X^{p,q}(\Theta_X)=\Gamma(X,\shA_X^{p,q}(\Theta_X))$ is the
vector space of  its global sections.

Let $f:X \lrg Y$ be a  holomorphic map of 
manifolds. We denote by  $f^*$ and $f_*$   the map induced by $f$,
i.e.,
$$
f^*: A_Y^{p,q}(\Theta_Y) \lrg A_X^{p,q}(f^*\Theta_Y) \quad \mbox{
and} \quad f_*: A_X^{p,q}(\Theta_X) \lrg A_X^{p,q}(f^*\Theta_Y).
$$

\section{Background}

Let $L=(\oplus _i L_i,d,[\, ,\,])$ be a DGLA and $(A,m_A) \in \Art
$, where $m_A$ denotes the maximal ideal of $A$. The set of
Maurer-Cartan elements with coefficients in $A$ is defined as
follows
$$
\MC_L(A)=\{ x \in L^1\otimes m_A \ |\ dx+ \displaystyle\frac{1}{2}
[x,x]=0 \},
$$
where the DGLA structure on $L \otimes m_A$ is the natural
extension of the DGLA structure on $L$. For each $a \in L^0\otimes
m_A$, we define the gauge action $*:\exp(L^0 \otimes m_A)\times
\MC_L(A)\lrg\MC_L(A)$ by the formula
$$
e^a * x:=x+\sum_{n\geq 0}  \frac{ [a,-]^n}{(n+1)!}([a,x]-da).
$$
Given $x \in \MC_L (A) $,  the \emph{irrelevant stabilizer} $
Stab_A(x)$ of $x$ is by definition
$$
Stab_A(x)=\{e^{dh+[x,h]}|\, h \in L^{-1}\otimes m_A\}.
$$
The set $ Stab_A(x)$ is a subgroup of $\exp(L^0\otimes A )$, that
is contained in the stabilizer of $x$ and it satisfies the
following property:
$$
\forall\  a \in L^0\otimes A  \qquad e^a
Stab_A(x)e^{-a}=Stab_A(y), \quad \mbox{ with } \quad y=e^a*x.
$$
The deformation functor $ \Def_L:\Art \lrg \Set$ associated with a
DGLA $L$ is:
$$
\Def_L(A)=\frac{\{ x \in L^1\otimes m_A \ |\ dx+
\displaystyle\frac{1}{2} [x,x]=0 \}}{  \exp(L^0\otimes A )  }.
$$

\begin{dhef}
A functor of Artin rings $F:\Art \lrg \Set $ is  \emph{controlled}
by a DGLA $L$ if $F $ is isomorphic to $\Def_L$.
\end{dhef}

\begin{exe}
Let $X$ be a manifold. The  \emph{Kodaira-Spencer} (differential
graded Lie) algebra of $X$ is
$$
KS_X=\bigoplus_i \Gamma(X,\shA_X^{0,i}(\Theta_X))=\bigoplus_i
A_X^{0,i}(\Theta_X).
$$
The differential $\tilde{d}$   is the opposite of the Dolbeault
differential, whereas the bracket $[\ , \ ]$ is defined in local
coordinates as the $\overline{\Omega}^*$-bilinear extension of the
standard bracket on $\shA^{0,0}_X(\Theta_X)$
($\overline{\Omega}^*=\ker (\de : \shA_X^{0,*} \lrg \shA_X^{1,*})$
is the sheaf of anti-holomorphic differential forms). Explicitly,
if $z_1, \ldots,z_n$ are local holomorphic coordinates on $X$, we
have
$$
\tilde {d}(f d\overline{z}_I \frac{\de}{\de z_i})=-\debar
(f)\wedge d\overline{z}_I \frac{\de}{\de z_i},
$$
$$
[f\frac{\de}{\de z_i}\, d\overline{z}_I,g\frac{\de}{\de z_j} \,
d\overline{z}_J]=(f\frac{\de g}{\de z_i}\frac{\de}{\de z_j}-
g\frac{\de f}{\de z_j}\frac{\de}{\de z_i})\, d\overline{z}_I
\wedge d \overline{z}_J ,\qquad \forall  \ f,g \in \shA_X^{0,0}.
$$
Then, $\shA_X^{0,*}(\Theta_X)$ is a sheaf of DGLAs.

\medskip

Define the \emph{holomorphic Lie derivative}
$$
\bl: \shA^{0,*}_X(\Theta_X)\lrg Der^*(\shA_X^{*,*}),
$$
$$
\bl_a(\omega)=\de(a \contr \omega)+ (-1)^{\deg(a)} a \contr \de
\omega,
$$
for each $a \in \shA^{0,*}_X(\Theta_X)$ and $ \omega \in
\shA_X^{*,*}$.

The DGLA sheaf morphism   $ \bl$ is    injective; moreover, using
$ \bl$, we define, for any object $(A,m_A) \in \Art$ and   $a \in
\shA_X^{0,0}(\Theta_X) \otimes m_A$, the automorphism $e^a$ of
$\shA _X^{0,*} \otimes A$:
\begin{equation}\label{equa def di e^a su shA^(0,*)tensor A}
e^a : \shA _X^{0,*} \otimes A \lrg \shA _X^{0,*} \otimes A, \ \ \
\ \ f \longmapsto e^a(f)=\sum^\infty_{n=0}\frac{\bl_a^n}{n!}(f).
\end{equation}

\begin{lem}\label{lemma e^a(d+l_x)e^-a = d+e^a *x}
For every local Artinian $\C$-algebra $(A,m_A)$,   $a\in
\shA_X^{0,0}(\Theta_X)\otimes m_A$ and  $x
 \in \MC_{KS_X}(A)$  we have
\begin{equation}\label{equa COMM e^a o d +x o e^-a=e^a*x}
e^a\circ (\debar +\bl_x) \circ e^{-a}=\debar + e^a*\bl_x\ :\
\shA_X^{0,0} \otimes A \lrg \shA_X^{0,1} \otimes A,
\end{equation}
where $*$ is the gauge action. In particular,
$$
\ker(\debar + e^a * \bl_x: \shA_X^{0,0}\otimes A \lrg
\shA_X^{0,1}\otimes A)=e^a(\ker(\debar +  \bl_x:
\shA_X^{0,0}\otimes A \lrg \shA_X^{0,1}\otimes A)).
$$

\end{lem}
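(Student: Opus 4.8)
The plan is to verify the operator identity \eqref{equa COMM e^a o d +x o e^-a=e^a*x} directly, by a formal computation with the exponential $e^a = \sum \bl_a^n/n!$, and then deduce the statement about kernels as an immediate corollary. The key point is that $\bl$ is a DGLA morphism from $\shA^{0,*}_X(\Theta_X)$ (with the Dolbeault-type differential and the bracket described above) into the DGLA $Der^*(\shA^{*,*}_X)$ of derivations of the sheaf of forms, where the differential on $Der^*$ is the adjoint action $[\debar, -]$ and the bracket is the graded commutator. Concretely, $\debar \circ \bl_a - (-1)^{\deg a}\bl_a \circ \debar = \bl_{\tilde d a}$ as operators (up to the sign convention for $\tilde d$ vs.\ $\debar$, which I would fix carefully at the start) and $[\bl_a, \bl_b] = \bl_{[a,b]}$, where on the left $[\,,\,]$ is the graded commutator of operators. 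These are standard Cartan-type formulas; I would either cite them or check them in local coordinates on $\shA^{0,0}_X$, where everything reduces to the identities already written out in the Kodaira--Spencer example.

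Granting this, the proof is a purely formal manipulation inside the completed DGLA $Der^*(\shA^{*,*}_X)\otimes A$. Writing $D = \debar + \bl_x$, conjugation by $e^a$ gives $e^a D e^{-a} = e^{\ad_a}(D)$ where $\ad_a = [\bl_a, -]$ is the adjoint action in the operator DGLA. Expanding, $e^{\ad_a}(\debar) = \debar + \sum_{n\geq 0}\frac{[\bl_a,-]^n}{(n+1)!}[\bl_a,\debar] = \debar - \sum_{n\geq 0}\frac{\ad_a^{\,n}}{(n+1)!}(\bl_{\tilde d a})$ (again modulo the sign bookkeeping between $\tilde d$ and $\debar$), and $e^{\ad_a}(\bl_x) = \bl_x + \sum_{n\geq 0}\frac{\ad_a^{\,n}}{(n+1)!}[\bl_a,\bl_x] = \bl_x + \sum_{n\geq 0}\frac{\ad_a^{\,n}}{(n+1)!}\bl_{[a,x]}$. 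Adding the two and using that $\bl$ is linear and injective, the right-hand side is $\debar + \bl_y$ with $y = x + \sum_{n\geq 0}\frac{[a,-]^n}{(n+1)!}([a,x]-\tilde d a)$, which is exactly the gauge-action formula $e^a * x$ (here $\tilde d$ is the KS-differential, i.e.\ $-\debar$ on the nose, so the sign matches the definition of $*$ in the background section). Since $x\in\MC_{KS_X}(A)$, $y = e^a*x$ is again Maurer--Cartan, so $\debar + \bl_y$ is again a well-defined operator of the stated type; this is what makes the two sides comparable as operators $\shA^{0,0}_X\otimes A \to \shA^{0,1}_X\otimes A$.

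For the ``in particular'' clause: since $e^a$ is an automorphism of $\shA^{0,*}_X\otimes A$ (a unipotent operator, as $a$ has coefficients in the nilpotent ideal $m_A$), and \eqref{equa COMM e^a o d +x o e^-a=e^a*x} reads $(\debar + e^a*\bl_x)\circ e^a = e^a \circ(\debar + \bl_x)$, a vector $f$ lies in $\ker(\debar + e^a*\bl_x)$ if and only if $e^{-a}f \in \ker(\debar+\bl_x)$, i.e.\ $f \in e^a(\ker(\debar+\bl_x))$. This is immediate once the operator identity is in hand.

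The main obstacle is bookkeeping rather than conceptual: keeping the sign conventions straight between the Kodaira--Spencer differential $\tilde d$ (defined as the \emph{opposite} of Dolbeault) and $\debar$ itself, between $\bl_a(\omega) = \de(a\contr\omega) + (-1)^{\deg a} a\contr\de\omega$ and the corresponding commutator with $\debar$ versus $\de$, and the Koszul signs in $e^{\ad_a}$ when $a$ has positive degree. The cleanest route is to first establish once and for all the two ``transgression'' identities $[\bl_a,\debar] = -\bl_{\tilde d a}$ (equivalently $= \bl_{\debar a}$) and $[\bl_a,\bl_b] = \bl_{[a,b]}$ as identities in $Der^*(\shA^{*,*}_X)$ — using the injectivity of $\bl$ and a local-coordinate check on $\shA^{0,0}_X$, which suffices because a derivation of $\shA^{*,*}_X$ is determined by its values on functions and $\debar$ of functions — and only then run the formal exponential computation, which at that point is identical to the proof that the gauge action on $\MC$ is intertwined by $\bl$ in any DGLA. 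I would also remark that it is enough to prove the operator identity on $\shA^{0,0}_X\otimes A$, as stated, so no compatibility on higher-degree forms is needed.
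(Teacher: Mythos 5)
Your argument is correct and is essentially the proof the paper relies on: the paper simply cites \cite[Lemma~5.1]{bib ManettiPREPRINT} (and the author's thesis), where the identity is obtained exactly as you do, from the conjugation formula $e^{a}De^{-a}=e^{\operatorname{ad}_{\bl_a}}(D)$ in the operator DGLA together with the two Cartan-type identities $[\bl_a,\debar]=\bl_{\debar a}$ and $[\bl_a,\bl_b]=\bl_{[a,b]}$ and the injectivity of $\bl$, the kernel statement then being immediate from invertibility of $e^a$. The only point needing care is the sign bookkeeping between $\tilde d=-\debar$ and the gauge-action formula, which you correctly flag and resolve.
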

\begin{proof}
See \cite[Lemma~5.1]{bib ManettiPREPRINT} or
\cite[Lemma~II.5.5]{bib tesidottorato}.

\end{proof}

\noindent Let  $\Def_X\colon\mathbf{Art}\to\mathbf{Set}$ be the
functor of infinitesimal deformations of $X$, i.e.,
$$
\Def_X(A)=\frac{\{\text{deformations $X_A$  of $X$ over
$\Spec(A)$}\}}{\sim}.
$$
Recall that a deformation  $X_A$  of $X$ over $\Spec(A)$ is
nothing else that a morphism $\mathcal{O}_{X_A}\to \mathcal{O}_X$
of sheaves of $A$-algebras such that $\mathcal{O}_{X_A}$ is flat
over $A$ and the induced map
$\mathcal{O}_{X_A}\otimes_A\mathbb{C}\to \mathcal{O}_X$ is an
isomorphism. Moreover,  $\Def_X$ has $H^1(X,\Theta_X)$ and
$H^2(X,\Theta_X)$ as   tangent  and obstruction space,
respectively.

The following theorem is well known and a proof based on the
theorem of Newlander-Nirenberg can be found in \cite{bib
Catanesecime}, \cite{bib GoMil III}  or more recently in \cite{bib
manRENDICONTi}. For a proof that avoid this theorem see
\cite[Theorem~II.7.3]{bib tesidottorato}.

\begin{teo} \label{teo def_k =Def_X}
Let $X$ be a  manifold and ${KS_X}$ its
Kodaira-Spencer algebra. Then there exists an isomorphism of
functors
$$
 \gamma': \Def_{KS_X} \lrg \Def_X,
$$
defined in the following way: given a local Artinian $\C$-algebra
$(A,m_A)$ and a solution of the Maurer-Cartan equation $x \in
A^{0,1}_X(\Theta_X) \otimes m_A$, we set
$$
\shO_{X_A}(x)=\ker(\shA^{0,0}_X\otimes
A\xrightarrow{\debar+\bl_x}\shA^{0,1}_X\otimes A),
$$
and the map $\shO_{X_A}(x) \lrg \shO_X$ is induced by the
projection $\shA^{0,0}_X \otimes A \lrg \shA^{0,0}_X \otimes
\C=\shA^{0,0}_X$.
\end{teo}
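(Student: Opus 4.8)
The plan is to verify directly that the assignment $x \mapsto \shO_{X_A}(x)$ gives a well-defined natural transformation $\gamma'\colon \Def_{KS_X}\to\Def_X$, and then to prove that it is an isomorphism of functors by checking bijectivity on each $(A,m_A)\in\Art$. I would organize the argument into four steps.

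First, I would show that for a Maurer-Cartan element $x\in A^{0,1}_X(\Theta_X)\otimes m_A$ the sheaf $\shO_{X_A}(x)=\ker(\debar+\bl_x)$ really is a flat deformation of $\shO_X$ over $A$. The key points here are: (i) the operator $\debar+\bl_x$ is a derivation of $\shA^{0,0}_X\otimes A$ (this is the reason for introducing the holomorphic Lie derivative $\bl$), so $\shO_{X_A}(x)$ is a sheaf of $A$-subalgebras; (ii) the integrability condition $\debar x+\tfrac12[x,x]=0$, together with $\bl$ being a morphism of sheaves of DGLAs, is exactly what makes $\debar+\bl_x$ square to zero on the Dolbeault complex, hence locally one can apply the holomorphic Poincar\'e/Dolbeault lemma in families to trivialize the deformation; (iii) local triviality then gives flatness over $A$ and the identification $\shO_{X_A}(x)\otimes_A\C\cong\shO_X$ via the projection $\shA^{0,0}_X\otimes A\to\shA^{0,0}_X$. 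This is where the theorem of Newlander--Nirenberg (or its substitute) enters, so I expect this first step to be the main obstacle: one needs that the almost-complex-type structure encoded by $\debar+\bl_x$ is integrable and induced by an honest complex-analytic structure on $X$ over $\Spec(A)$.

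Second, I would check that $\gamma'$ is well-defined on gauge-equivalence classes, i.e. that if $y=e^a*x$ with $a\in\shA^{0,0}_X(\Theta_X)\otimes m_A$ then $\shO_{X_A}(x)\cong\shO_{X_A}(y)$ as deformations. This is immediate from Lemma~\ref{lemma e^a(d+l_x)e^-a = d+e^a *x}: the identity $e^a\circ(\debar+\bl_x)\circ e^{-a}=\debar+e^a*\bl_x$ shows that $e^a$ carries $\ker(\debar+\bl_x)$ isomorphically onto $\ker(\debar+\bl_y)$, and $e^a$ reduces to the identity modulo $m_A$, so the isomorphism is compatible with the maps to $\shO_X$. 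Naturality of $\gamma'$ in $A$ is then routine, since all the constructions commute with morphisms of Artin rings.

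Third and fourth, I would prove surjectivity and injectivity of $\gamma'(A)$. For surjectivity: given any deformation $X_A$, choose a $C^\infty$ trivialization $X_A\cong X\times\Spec(A)$ as differentiable families; the holomorphic structure on $X_A$ is then recorded by a perturbed $\debar$-operator, which (again invoking the local Dolbeault lemma with parameters and the fact that $\bl$ is injective) can be written uniquely as $\debar+\bl_x$ for some $x\in A^{0,1}_X(\Theta_X)\otimes m_A$; the integrability of the complex structure forces $x$ to satisfy Maurer--Cartan, and by construction $\gamma'(A)([x])=X_A$. For injectivity: if $\shO_{X_A}(x)\cong\shO_{X_A}(y)$ compatibly with the reductions to $\shO_X$, the isomorphism is given by an automorphism of $\shA^{0,0}_X\otimes A$ of the form $e^a$ (using that $\bl$ is injective and that such an automorphism, being the identity mod $m_A$, is a well-defined exponential) conjugating $\debar+\bl_x$ into $\debar+\bl_y$; by Lemma~\ref{lemma e^a(d+l_x)e^-a = d+e^a *x} this says precisely $y=e^a*x$, so $[x]=[y]$ in $\Def_{KS_X}(A)$. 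Combining the four steps yields that $\gamma'$ is an isomorphism of functors.
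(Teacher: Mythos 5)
The paper does not actually prove this theorem: it records it as well known and refers to Catanese, Goldman--Millson and Manetti's lecture notes for proofs based on the Newlander--Nirenberg theorem, and to the author's thesis for a proof that avoids it. Your outline is precisely the Newlander--Nirenberg route of the first group of references, so there is no internal proof to compare it against; judged on its own terms it is the standard and correct strategy, and your step 2 (gauge invariance via Lemma~\ref{lemma e^a(d+l_x)e^-a = d+e^a *x}) is exactly right. Two points carry essentially all of the real content and are only gestured at in your sketch. First, flatness (step 1) and surjectivity (step 3) both reduce to the same local statement: on a polydisk every Maurer--Cartan element is gauge-trivial, i.e.\ $x=e^a*0$ for some $a\in\shA^{0,0}_X(\Theta_X)\otimes m_A$, so that locally $\shO_{X_A}(x)=e^a(\shO_X\otimes A)$ is free over $A$; for Artinian $A$ this is not a genuine Newlander--Nirenberg integration but an induction on the length of $A$ using the vanishing of $H^1(\Theta_X)$ on Stein opens, and the converse statement (every deformation of a polydisk over $\Spec(A)$ is trivial) is what lets you present an arbitrary $X_A$ as $\ker(\debar+\bl_x)$ after gluing --- your appeal to a ``$C^\infty$ trivialization'' should be replaced by this algebraic local-triviality argument, since over an Artinian base there is no underlying differentiable family to trivialize. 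Second, injectivity (step 4) requires knowing that any isomorphism $\shO_{X_A}(x)\to\shO_{X_A}(y)$ of deformations extends to an automorphism of $\shA^{0,0}_X\otimes A$ of the form $e^a$ with $a\in A^{0,0}_X(\Theta_X)\otimes m_A$; this does not follow from injectivity of $\bl$ alone but is again proved by lifting the identity step by step along a composition series of $A$. With these two lemmas supplied, your four steps assemble into the proof given in the cited sources.
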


\end{exe}

\section{Deformation functor of a pair of morphisms of
DGLAs}\label{sezio def funct of pair morfisms}

Let $h:L \lrg M$ be a morphism of DGLAs. \emph{The suspension of
the mapping cone of $h$} is   the complex $(C^\cdot_h,\delta)$,
where $ C_h^i=L^i   \oplus M^{i-1} $ and $\delta (l, m)= ( d l,
h(l) -d m ) $.

Let $h:(L,d ) \lrg (M,d )$ and $g:(N,d ) \lrg (M,d )$ be morphisms
of DGLAs:
\begin{center}
$\xymatrix{  & L \ar[d]^h\\
N\ar[r]^g & M. \\
}$
\end{center}

The \emph{suspension of the mapping cone of the pair $(h,g)$ } is
the differential graded vector space $(\cil^\cdot,D)$, where
$$
\cil^i=L^i \oplus N^i \oplus M^{i-1}
$$
and the differential $D $ is defined as follows
$$
L^i \oplus N^i \oplus M^{i-1} \ni (l,n,m) \stackrel{D}{\lrg} ( d
l, d n,-d m-g(n)+h(l)) \in L^{i+1} \oplus N^{i+1} \oplus M^i.
$$

The projection $\cil^\cdot \lrg L^\cdot \oplus N^\cdot$ is a
morphism of complexes and so there exists the following exact
sequence
\begin{center}
$ \xymatrix{0 \ar[r] & (M^{\cdot-1},-d ) \ar[r]   &
(C^\cdot_{(h,g)},D) \ar[r]   & (L^\cdot \oplus N^\cdot,d) \ar[r] &
0
\\ }$
\end{center}
that induces
\begin{equation}\label{success esatta lunga omolog cil(fg)}
\cdots \lrg H^i(\cil^\cdot) \lrg H^i(L^\cdot\oplus N^\cdot) \lrg
H^i(M^\cdot) \lrg H^{i+1}(\cil^\cdot) \lrg \cdots .
\end{equation}

Note that, in general, we can not define any bracket on the cone
$\cil^\cdot$, such that $\cil^\cdot$ is a DGLA and the projection
$\cil^\cdot\lrg L \oplus N$ is a morphism of DGLAs. In Section
\ref{sezio l infinito cono}, we will define an $L_\infty$
structure on $\cil^\cdot$.

\begin{lem}\label{lem h inj then C_(h,g)=C_pi dopo g}
Let $g:N \lrg M$ and $h:L \lrg M$ be morphisms of complexes with
$h $ injective, i.e., there exists the exact sequence of complexes
\begin{center}
$\xymatrix{0 \ar[r] &L  \ar[r]^h &  M \ar[r]^{\pi\ } & \coker
(h)\ar[r] & 0 \\
& & N.\ar[u]_g & & \\
}$
\end{center}
Then, $(\cil^\cdot,D)$ is quasi-isomorphic to $(C^\cdot_{\pi \circ
g},\delta)$.

\end{lem}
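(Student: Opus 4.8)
The plan is to write down an explicit quasi-isomorphism between the two complexes and check it commutes with the differentials. Since $h$ is injective, the short exact sequence of complexes $0 \to L \xrightarrow{h} M \xrightarrow{\pi} \coker(h) \to 0$ shows that $\pi$ is a surjective quasi-isomorphism onto $\coker(h)$ shifted appropriately—more precisely, the mapping cone $C_h^\cdot$ is quasi-isomorphic to $\coker(h)[-1]$ because the long exact cohomology sequence of the cone degenerates. I would make this precise by recalling that for $h$ injective, the map $C_h^\cdot = L^\cdot \oplus M^{\cdot-1} \to M^{\cdot-1}$ given by $(l,m)\mapsto -\pi(m)$ (or rather, identifying via the snake lemma) induces an isomorphism on cohomology with $\coker(h)^{\cdot-1}$; equivalently, $C_h^\cdot$ is acyclic in the quotient sense. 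The key computation is then to relate $\cil^\cdot$, which has underlying space $L^i \oplus N^i \oplus M^{i-1}$, to $C^\cdot_{\pi\circ g}$, which has underlying space $N^i \oplus \coker(h)^{i-1}$.

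First I would define the map $\Phi : \cil^\cdot \to C^\cdot_{\pi\circ g}$ by $\Phi(l,n,m) = (n, -\pi(m))$, and check that it is a morphism of complexes: applying $D$ gives $(dl, dn, -dm - g(n) + h(l))$, and then $\Phi$ sends this to $(dn, -\pi(-dm-g(n)+h(l))) = (dn, \pi(dm) + \pi g(n))$ since $\pi h = 0$; on the other side, $\delta(n,-\pi(m)) = (dn, (\pi\circ g)(n) - d(-\pi m)) = (dn, \pi g(n) + d\pi(m))$, and these agree because $\pi$ is a chain map so $\pi(dm) = d\pi(m)$. Hence $\Phi$ is a morphism of complexes. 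The second step is to show $\Phi$ is a quasi-isomorphism. I would do this via the long exact sequence \eqref{success esatta lunga omolog cil(fg)}, or more directly by exhibiting a filtration/kernel argument: the kernel of $\Phi$ consists of triples $(l, 0, m)$ with $\pi(m)=0$, i.e. $m = h(l')$ for some $l'$, so $\ker\Phi$ is the subcomplex $\{(l,0,h(l')) : l\in L^i, l'\in L^{i-1}\}$ with differential induced by $D$, namely $(l,0,h(l')) \mapsto (dl, 0, -d h(l') + h(l)) = (dl, 0, h(l - dl'))$. This subcomplex is isomorphic (as a complex) to the suspended cone $C_{\mathrm{id}_L}^\cdot$ of the identity on $L$, which is acyclic. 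Thus $\Phi$ has acyclic kernel, and a parallel check—using surjectivity of $\pi$ to lift elements of $\coker(h)$—shows $\Phi$ is surjective with acyclic kernel, hence a quasi-isomorphism by the long exact sequence of $0 \to \ker\Phi \to \cil^\cdot \to C^\cdot_{\pi\circ g} \to 0$.

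The main obstacle I anticipate is the bookkeeping with signs and degree shifts: the suspension conventions (the $M^{i-1}$ summand, the sign $-dm$ in $D$ versus $-dm$ in $\delta$, the $-d$ differential on the $M^{\cdot-1}$ strand in the earlier exact sequence) must be tracked carefully so that $\Phi$ genuinely commutes with $D$ and $\delta$ on the nose, and so that the identification of $\ker\Phi$ with an acyclic cone is correct including signs. A cleaner alternative, which I would fall back on if the direct argument gets messy, is to invoke the earlier exact sequence $0 \to M^{\cdot-1} \to \cil^\cdot \to L^\cdot\oplus N^\cdot \to 0$ together with its analogue $0 \to \coker(h)^{\cdot-1} \to C^\cdot_{\pi\circ g} \to N^\cdot \to 0$, map the first short exact sequence to the second via $(\mathrm{id}, \pi)$ on the sub, $\Phi$ in the middle, and $(\pi_L \text{ killed}, \mathrm{id}_N)$ composed appropriately on the quotient, and then apply the five lemma to the resulting map of long exact cohomology sequences—using that $\pi : M \to \coker(h)$ restricted to the relevant relative cohomology is an isomorphism precisely because $h$ is injective. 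Either route yields the claimed quasi-isomorphism.
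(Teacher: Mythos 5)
Your map $\Phi(l,n,m)=(n,-\pi(m))$ is, up to an overall sign, exactly the map $\gamma(l,n,m)=(-n,\pi(m))$ that the paper uses, and your verification---chain-map check, surjectivity via surjectivity of $\pi$, and identification of $\ker\Phi$ with the acyclic cone $C^\cdot_{\mathrm{id}_L}$ using injectivity of $h$---correctly supplies the ``straightforward computation'' the paper leaves implicit. The proposal is correct and takes essentially the same approach as the paper.
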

\begin{proof}

Let $\gamma:\cil^\cdot \lrg C^\cdot_{ \pi \circ g }$ be defined as
$$
\cil^i \ni (l,n,m) \stackrel{\gamma}{\longmapsto} (-n,\pi(m)) \in
C^i_{ \pi \circ g }.
$$
Then, a straightforward computation shows that $\gamma$ is a
quasi-isomorphism.

\end{proof}

\subsection{The functors $\MC_{(h,g)}$ and $\Def_{(h,g)}$ }
\label{sezi def MC_(h,g) and DEF(h,g) no ext}

The \emph{Maurer-Cartan functor associated with the pair $(h,g)$ }
is defined as follows
$$
\MC_{(h,g)}: \Art \lrg \Set,
$$
$$
\MC_{(h,g)}(A)=\{(x,y,e^p) \in (L^1 \otimes m_ A)\times  (N^1
\otimes m_A ) \times \operatorname{exp}(M^0 \otimes m_A)  |
$$
$$
dx +\frac{1}{2}[x,x]=0,\   dy +\frac{1}{2}[y,y]=0,\  g(y)=e^p*h(x)
\}.
$$

We note that $\MC_{(h,g)} $ is  a \emph{homogeneous} functor,
i.e., for each $B \lrg A$ and $C \lrg A$ in $\Art$, $\MC_{(h,g)}(B
\times _A C) \cong \MC_{(h,g)}(B)\times
_{\MC_{(h,g)}(A)}\MC_{(h,g)}(C)$.

\begin{oss}
In \cite[Section~2]{bib ManettiPREPRINT}, M.~Manetti introduced
the functor $\MC_h:\Art \lrg \Set$, associated with a morphism
$h:L \lrg M$ of DGLAs, by defining, for each $(A,m_A) \in \Art$,
$$
\MC_{h}(A)=
$$
$$
\{(x, e^p) \in (L^1 \otimes m_ A)  \times \operatorname{exp}(M^0
\otimes m_A)  |\  dx +\frac{1}{2}[x,x]=0,\, e^p*h(x)=0 \}.
$$
Therefore, if we take $N=0$ and $g=0$, the new functor
$\MC_{(h,g)}$ reduces to the old one $\MC_h$. By choosing $M=N=0$
and $h=g=0$, $\MC_{(h,g)}$ reduces to the Maurer-Cartan functor
$\MC_L$ associated with the DGLA $L$.
\end{oss}

Next, we consider on $\MC_{(h,g)}(A)$ the following relation $
\sim$:
$$
(x_1,y_1,e^{p_1})\, \sim\,(x_2,y_2,e^{p_2})
$$
if and only if there exist $a \in L^0 \otimes m_A$, $b \in N^0
\otimes m_A $ and $c \in M^{-1}\otimes m_A $ such that
$$
x_2=e^a *x_1, \qquad y_2=e^b*y_1
$$
and
$$
e^{p_2}=e^{g(b)}e^T e^{p_1} e^{-h(a)}, \quad \mbox{with }\quad
T=dc+[g(y_1),c].
$$
By definition of the irrelevant stabilizer, we note  that $e^T\in
Stab_A(g(y_1))$. An easy computation shows that   $ \sim$ is a
well defined equivalence relation \cite[Lemma~III.2.23]{bib
tesidottorato}. Then, it makes sense to consider the following
functor.

\begin{dhef}\label{defin funtor Def_(h,g) non esteso}
The \emph{deformation functor  associated with a  pair $(h,g)$} of
morphisms of  differential graded Lie algebras is:
$$
\Def_{(h,g)}:\Art \lrg \Set,
$$
$$
\Def_{(h,g)}(A)= \frac{\MC_{(h,g)}(A)}{ \sim }.
$$
\end{dhef}

\begin{oss}\label{oss DEf(hg) reduces to DEF_h e DEF L}
In \cite[Section~2]{bib ManettiPREPRINT}, M.~Manetti defined the
functor $\Def_h$ associated with a morphism $h:L \lrg M$ of DGLAs:
$$
\Def_h: \Art \lrg \Set,
$$
$$
\Def_h(A)= \frac{\MC_{h}(A)}{ \exp(L^0 \otimes m_A) \times
\exp(dM^{-1} \otimes m_A)},
$$
where  the gauge action of $\exp(L^0 \otimes m_A) \times
\exp(dM^{-1} \otimes m_A)$ is given by the   formula
$$
(e^a,e^{dm})*(x,e^p)= (e^a*x,e^{dm} e^pe^{-h(a)}),\qquad \forall \
a \in  L^0 \otimes m_A, m \in  M^{-1} \otimes m_A.
$$

Therefore, if we take $N=0$ and $g=0$, the new functor
$\Def_{(h,g)} $ reduces to the old one $\Def_h$.

By choosing $N=M=0$ and $h=g=0$, $\Def_{(h,g)}$ reduces to the
Maurer-Cartan functor  $\Def_L$ associated with the DGLA $L$.
\end{oss}

\begin{oss}\label{oss proj varrho induce no EXT Def(h,g)->Def_N}
Consider the functor $ \Def_{(h,g)}$. Then the projection
$\varrho$ on the second factor:
$$
\varrho: \Def_{(h,g)} \lrg \Def_N,
$$
$$
\Def_{(h,g)}(A) \ni (x,y,e^p)\stackrel{\varrho}{\lrg} y \in
\Def_N(A)
$$
is a morphism of deformation functors.
\end{oss}
\begin{oss}\label{oss DEF_(h,g) con h iniettivo}
If the morphism \emph{$h$ is injective}, then  for each $(A,m_A)
\in \Art$  the functor $\MC_{(h,g)}$ has the following form:
$$
\MC_{(h,g)}(A)=\{(x,e^p) \in  (N^1 \otimes m_A)\times
\operatorname{exp}(M^0 \otimes m_A)  |
$$
$$
dx +\frac{1}{2}[x,x]=0,\  e^{-p}*g(x) \in L^1 \otimes m_A \}.
$$
If \emph{ $M$ is also concentrated in non negative degrees}, then
the gauge equivalence   is given by
$$
(x,e^p)\sim (e^b*x,e^{g(b)} e^p e^{a}), \qquad \mbox{ with } a \in
L^0 \otimes m_A \mbox{ and } b \in N^0 \otimes m_A.
$$

\end{oss}

\subsection{Tangent and obstruction spaces of $\MC_{(h,g)}$
and $\Def_{(h,g)}$}\label{sez tangent e ostru MC(h,g) DEF(h,g)}

By definition, the tangent space of a functor of Artin rings $F$
is $F(\C[\epsi])$, where $\epsi^2=0$. Therefore,
$$
\MC_{(h,g)}(\cepsi)=
$$
$$
=\{(x,y,e^p) \in (L^1 \otimes \C\epsi) \times  (N^1 \otimes
\C\epsi) \times  \operatorname{exp}(M^0 \otimes \C\epsi)|
$$
$$
dx=dy=0, h(x)-g(y)-dp=0 \}
$$
$$
\cong \{(x,y, p) \in L^1 \times  N^1  \times
 M^0  |\ dx=dy=0, -dp -g(y) +h(x)=0\}=
$$
$$
\ker (D:\cil^1 \lrg \cil^2),
$$
and
$$
\Def_{(h,g)}(\cepsi)\cong
$$
$$
\frac{\{(x,y, p) \in L^1 \times  N^1  \times
 M^0  |\ dx=dy=0, g(y)=h(x)-dp \} }{
\{ (-da,-db,dc+g(b)-h(a))|\ a \in L^0  , b \in N^0
 , c  \in M^{-1}   \}}
$$
$$
\cong H^1(\cil^\cdot ).
$$

The obstruction space of $\Def_{(h,g)}$, is naturally contained in
$H^2 (\cil^\cdot)$. Indeed, let
$$
0 \lrg J \lrg {\tilde{A}} \stackrel{\alpha}{\lrg} A\lrg 0
$$
be a small extension and $(x,y,e^p) \in \MC_{(h,g)}(A)$.

Since $\alpha$ is surjective, there exist  $\tilde{x} \in L^1
\otimes m_{\tilde{A}} $ that lifts $x$, $\tilde{y}\in N^1 \otimes
m_{\tilde{A}} $ that lifts $y$, and  $q \in M^0 \otimes
m_{\tilde{A}}$ that lifts $p$. Let
$$
l=d\tilde{x}+ \displaystyle\frac{1}{2}[\tilde{x},\tilde{x}] \in
L^2 \otimes m_{\tilde{A}}
$$
and
$$
k=d\tilde{y}+ \displaystyle\frac{1}{2}[\tilde{y},\tilde{y}] \in
N^2 \otimes m_{\tilde{A}}.
$$
It is easy to see that $\alpha(l)=\alpha(k)=dl=dk=0$; then $l \in
H^2(L) \otimes J$ and $k \in H^2(N) \otimes J$.

Let $r =-g(\tilde{y})+e^q * h(\tilde{x})\in M^1\otimes
m_{\tilde{A}}$; thus, $\alpha(r)=0$ or, equivalently,  $r \in
M^1\otimes J$. It can be proved that $-dr -g(k)+h(l)=0$ and so
$(l,k,r) \in Z^2(\cil ^\cdot)\otimes J$. Let $[(l,k,r)] $ be the
class in $H^2(\cil ^\cdot)\otimes J$. This class does not depend
on the choice of the liftings and  it vanishes  if and only if
there exists a lifting of $(x,y,e^p) \in \MC_{(h,g)}(A)$ in $
\MC_{(h,g)}({\tilde{A}})$  (\cite[Lemma~III.1.19]{bib
tesidottorato}).

\section{Deformations  of holomorphic maps}

\begin{dhef}
Let $f:X \lrg Y$ be a  holomorphic map of 
manifolds and $A \in \Art$. An \emph{infinitesimal deformation of
f  over $\Spec(A)$} is a commutative diagram of complex spaces
\begin{center}
$\xymatrix{X_A\ar[rr]^{\mathcal{F}} \ar[dr]_\pi &  & Y_A
\ar[ld]^\mu \\
            & \Spec(A), &  \\ }$
\end{center}
where   $(X_A,\pi,\Spec(A))$ and $(Y_A,\mu,\Spec(A))$ are
infinitesimal deformations of $X$ and $Y$, respectively, and
$\mathcal{F}$ is a holomorphic map that restricted to the fibers
over the closed point of $\Spec(A)$ coincides with $f$. If
$A=\C[\epsi]$ we have a \emph{first order deformation} of $f$.
\end{dhef}

\begin{dhef}

Let
\begin{center}
$\xymatrix{X_A\ar[rr]^{\mathcal{F}} \ar[dr]_\pi &  & Y_A
\ar[ld]^\mu &   \mbox{ and }    &X'_A\ar[rr]^{\mathcal{F}'}
\ar[dr]_{\pi'} & &
Y'_A \ar[ld]^{\mu'}\\
            & \Spec(A) & &        &  &\Spec(A) \\ }$
\end{center}
be two infinitesimal deformations of $f$ over $\Spec(A)$. They are
\emph{isomorphic} if there exist bi-holomorphic maps $\phi : X_A
\lrg X'_A$ and $\psi: Y_A\lrg Y'_A$ (that are equivalences of
infinitesimal deformations of $X$ and $Y$, respectively) such that
the following diagram is commutative:
\begin{center}
$\xymatrix{X_A\ar[rr]^{\mathcal{F}} \ar[d]_\phi &  & Y_A
\ar[d]^\psi \\
X'_A\ar[rr]^{\mathcal{F}'} &  & Y'_A. \\ }$
\end{center}
\end{dhef}

\begin{dhef}\label{def funore DEF(f)}
The \emph{functor of infinitesimal deformations} of a  holomorphic
map $f:X \lrg Y$ is
$$
\Def(f): \Art \lrg \Set,
$$
$$
\ \ \ \ \ \  \ \ \ \  \ \ A\longmapsto \Def(f)
(A)=\left\{\begin{array}{c} \mbox{ isomorphism  classes  of}\\
 \mbox{ infinitesimal   deformations     }\\
 \mbox {of } f  \mbox { over }  \Spec(A) \\
\end{array} \right\}.
$$
\end{dhef}

\begin{oss}\label{oss defo mapp=Def grafo prod defX.defY}
Let $\Gamma$ be the graph of $f$ in the product $X \times Y$. The
infinitesimal deformations of $f$ can be interpreted as
infinitesimal deformations $ \Gamma_A $ of $\Gamma$  in the
product $X \times Y$, such that the induced deformations $ (X
\times Y)_A$ of $X \times Y$ are products of infinitesimal
deformations of $X$ and of $Y$. Since not all the deformations of
a product are products of deformations (\cite[pag.~436]{bib
Kodaira SpencerII}), we are not just considering the deformations
of the graph in the product. Moreover, with this interpretation,
two infinitesimal deformations $\Gamma_A \subset (X \times Y)_A $
and $\Gamma_A' \subset (X \times Y)_A'$ are equivalent if there
exists an isomorphism $\phi:(X \times Y)_A\lrg (X \times Y)_A'$ of
infinitesimal deformations of $X\times Y$ such that
$\phi(\Gamma_A)=\Gamma_A'$.

\end{oss}

Let $(B^\cdot,D_{\debar})$  be the complex with
$$
B^p=A_X^{(0,p)}( \Theta_X) \oplus A_Y^{(0,p)}(\Theta_Y) \oplus
  A_X^{(0,p-1)}( f^*\Theta_Y)
$$
and
$$
D_{\debar}: B^p \lrg B^{p+1}, \qquad (x,y,z)\longmapsto(\debar
x,\debar y, \debar z+ (-1)^p(f_*x-f^*y)).
$$

\begin{teo}[E.~Horikawa]\label{teo TANG e OSTR DEF(f) horikawa}
$H^1 (B^\cdot)$ is in one-to-one  correspondence with the first
order deformations of $f:X \lrg Y$.

The obstruction space of the functor $\Def(f)$ is naturally
contained in $H^2 (B^\cdot)$.
\end{teo}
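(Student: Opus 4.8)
The plan is to follow Horikawa's classical computation, reorganized around the \v{C}ech--Dolbeault dictionary. First I would recall that a first order deformation of $f$ consists of a first order deformation of $X$, one of $Y$, and a compatible first order deformation of the map $\mathcal{F}$. By Theorem~\ref{teo def_k =Def_X} (or rather its first-order shadow), a first order deformation of $X$ is classified by $H^1(X,\Theta_X)=H^1(A_X^{0,\cdot}(\Theta_X))$, and similarly for $Y$; an element is represented by a $\debar$-closed form $x\in A_X^{0,1}(\Theta_X)$, resp.\ $y\in A_Y^{0,1}(\Theta_Y)$. So the content of the theorem is to show that the extra datum of deforming the map, compatibly with given deformations of source and target, is governed exactly by the mapping-cone-type complex $(B^\cdot,D_{\debar})$ built from $f_*$ and $f^*$.

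The key steps, in order: (i) Describe a first order deformation of $f$ concretely. Working in the Dolbeault model, the deformation $X_\epsi$ is $\shO_{X_\epsi}(x)=\ker(\shA_X^{0,0}\otimes\cepsi\xrightarrow{\debar+\bl_x}\shA_X^{0,1}\otimes\cepsi)$ as in Theorem~\ref{teo def_k =Def_X}, and likewise $Y_\epsi$ via $y$; the map $\mathcal{F}$ is then an $\cepsi$-algebra homomorphism $\shO_{Y_\epsi}(y)\to f_*\shO_{X_\epsi}(x)$ reducing to $f^\sharp$ modulo $\epsi$. (ii) Write such an $\mathcal{F}$ as $f^\sharp + \epsi\,\xi$ where $\xi$ measures the infinitesimal perturbation of the map; unwinding the condition that $\mathcal{F}$ be a well-defined homomorphism $\shO_{Y_\epsi}(y)\to f_*\shO_{X_\epsi}(x)$ forces $\xi$ to be (represented by) an element $z\in A_X^{0,0}(f^*\Theta_Y)$ subject to the compatibility $\debar z = f_*x - f^*y$ — this is precisely the cocycle condition coming from $D_{\debar}(x,y,z)=0$ in $B^1$ (the sign $(-1)^p$ being harmless in degree $p=1$). (iii) Analyze when two such triples give isomorphic deformations of $f$: an equivalence is a pair of automorphisms $e^a$ of $X_\epsi$, $e^b$ of $Y_\epsi$ (with $a\in A_X^{0,0}(\Theta_X)$, $b\in A_Y^{0,0}(\Theta_Y)$) intertwining the two maps; tracing through, this changes $(x,y,z)$ by $(\debar a,\debar b,\debar c + f_* a - f^* b)$-type coboundaries, i.e.\ by $D_{\debar}$ of a degree-$0$ element of $B^\cdot$. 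Hence isomorphism classes of first order deformations of $f$ are in bijection with $H^1(B^\cdot)$.

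For the obstruction statement I would run the standard lifting argument along a small extension $0\to J\to\tilde A\to A\to 0$, exactly parallel to the computation in Section~\ref{sez tangent e ostru MC(h,g) DEF(h,g)}: choose liftings $\tilde x,\tilde y$ of the Maurer--Cartan-type data for $X$ and $Y$ and a lifting $\tilde z$ of the map-datum, form the three obstruction terms (the Maurer--Cartan defects of $\tilde x$ and $\tilde y$, valued in $H^2(X,\Theta_X)\otimes J$ and $H^2(Y,\Theta_Y)\otimes J$, together with the failure $\tilde z$ has in intertwining the liftings, valued in $H^1(X,f^*\Theta_Y)\otimes J$), and check that together they assemble into a $D_{\debar}$-cocycle in $B^2\otimes J$ whose class is independent of the liftings and vanishes iff a lift of the deformation of $f$ exists. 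The main obstacle I expect is step (ii)--(iii): making the identification between the analytic/sheaf-theoretic description of $\mathcal{F}$ and $z\in A_X^{0,0}(f^*\Theta_Y)$ genuinely precise, in particular getting the compatibility equation $\debar z = f_* x - f^* y$ and the coboundary relations with the correct $f_*$ versus $f^*$ placement and signs — essentially because $f^*\Theta_Y$ enters through $f^*$ on forms pulled back from $Y$ but through $f_*$ on the source directions, and keeping that bookkeeping straight is where Horikawa's original computation does its real work. One can either reproduce this directly or, more economically, invoke Lemma~\ref{lem h inj then C_(h,g)=C_pi dopo g} together with Theorem~\ref{teo Def_(h,g) Def (f)} to reduce the cohomology computation to the already-known tangent/obstruction spaces of $\Def_{(h,g)}$ from Section~\ref{sez tangent e ostru MC(h,g) DEF(h,g)}.
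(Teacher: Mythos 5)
The paper does not actually prove this statement: it attributes the result to Horikawa and simply cites \cite[Section~3.6]{bib Namba}, so your proposal is being compared against a reference rather than an argument. Your primary route is the standard one and is correct in outline: identify a first order deformation of $f$ with a triple $(x,y,z)$, derive the compatibility $\debar z = f_*x - f^*y$ from the requirement that $\mathcal{F}^\sharp = f^\sharp + \epsi\,\xi$ be a homomorphism $\shO_{Y_\epsi}(y)\to f_*\shO_{X_\epsi}(x)$, check that equivalences change $(x,y,z)$ by $D_{\debar}$-coboundaries $(\debar a,\debar b, f_*a - f^*b)$ (note $c\in A_X^{0,-1}(f^*\Theta_Y)=0$, so there is no $\debar c$ term in degree $0$), and run the usual small-extension argument for obstructions. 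This is exactly the computation Namba carries out, and steps (ii)--(iii), which you correctly identify as the substantive part, are where the actual work lies; your sketch leaves them as bookkeeping to be done but does not misstate anything.

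One genuine caution: the ``more economical'' alternative you offer at the end --- invoking Lemma~\ref{lem h inj then C_(h,g)=C_pi dopo g} and Theorem~\ref{teo Def_(h,g)  Def (f)} to read off the tangent and obstruction spaces of $\Def(f)$ from those of $\Def_{(h,g)}$ --- is circular in the logical architecture of this paper. The proof of Theorem~\ref{teo Def_(h,g)  Def (f)} establishes that $\gamma$ is an isomorphism precisely by checking conditions (ii) and (iii), namely that $\gamma$ induces a bijection on tangent spaces and an injection on obstruction spaces, and that check presupposes knowing that the tangent and obstruction spaces of $\Def(f)$ are $H^1(B^\cdot)$ and (a subspace of) $H^2(B^\cdot)$, i.e.\ it presupposes the Horikawa theorem together with the quasi-isomorphism $\beta$ of Lemma~\ref{lem isomor complesso Namba con Cono}. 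So you must carry out the direct computation (or cite Horikawa/Namba as the paper does); you cannot derive this theorem from the later comparison results.
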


\begin{proof}
See \cite[Section~3.6]{bib Namba}.

\end{proof}

\begin{oss}\label{oss class H^2(TX,TY,f*TY) def by  Xe,Ye}
Consider a first order deformation $f_\epsi$ of $f$: in
particular, we are considering   first order deformations
$X_\epsi$ and $Y_\epsi$, of $X$ and  $Y$, respectively.

Then, we associate with $X_\epsi$ a class $x \in H^1(X,\Theta_X)$
and with $Y_\epsi$ a class $y \in H^1(Y,\Theta_Y)$. Therefore, the
class in $H^1 (B^\cdot)$ associated with $f_\epsi$ is $[(x,y,z)]$,
with $z \in  A_X^{(0,0)} ( f^*\Theta_Y))$ such that $\debar z =
f_*x-f^*y$.

Analogously, let  $0\lrg J \lrg {\tilde{A}} \lrg A\lrg 0$ be a
small extension and $\mathcal{F}_A$ an infinitesimal deformation
of $f$ over $\Spec (A)$. If $h \in H^2(X,\Theta_X)$ and $k \in
H^2(Y, \Theta_Y)$ are the obstruction classes associated with
$X_A$ and $Y_A$, respectively, then the obstruction class in $H^2
(B^\cdot)$ associated with $\mathcal{F}_A$ is $[(h,k,r)]$, with $r
\in A_X^{(0,1)} ( f^*\Theta_Y))$ such that $\debar r =
-(f_*h-f^*k)$.
\end{oss}

\bigskip

Let $Z:=X \times Y $ be the product of $X$ and $Y$ and $p:Z\lrg X$
and $q:Z \lrg Y$  the natural projections. Defining the morphism
$$
F:X \lrg \Gamma \subseteq Z,
$$
$$
\ \ \ x \longmapsto (x,f(x)),
$$
we get the following commutative diagram:
\begin{center}
$\xymatrix{X \ar[rrrr]^{F} \ar[ddrrr]_{id} \ar[ddrrrrr]^f & &
& & Z\ar[ddl]_{ p} \ar[ddr]_{ \ \ q} & \\
& & &  & &  \\
& & & X & & Y. \\ }$
\end{center}
In particular, $F^*\circ p^*=id$ and $F^* \circ q^*=f^*$. Since
$\Theta_Z=p^*\Theta_X \oplus q^*\Theta_Y$, it follows that
$F^*(\Theta_{Z})=\Theta_X \oplus f^*\Theta_Y$. Define the morphism
$\gamma:\Theta_Z \lrg f^*\Theta_Y$ as the product
$$
\gamma: \Theta_Z \stackrel{F^*}{\lrg}\Theta_X \oplus f^* \Theta_Y
\stackrel{(f^*,-id)}{\lrg} f^*\Theta_Y;
$$
moreover, let $\pi$ be the following surjective morphism:
$$
A^{0,*}_{Z}(\Theta_{Z}) \stackrel{\pi}{\lrg}
A_X^{0,*}(f^*\Theta_Y) \lrg 0,
$$
$$
\pi(\omega\, u)= F^*(\omega)\gamma(u), \qquad \forall \ \omega \in
A^{0,*}_{Z}, \ u \in \Theta_{Z}.
$$
Since each $u \in \Theta_{Z}$ can be written as $u=p^*v_1+q^*v_2$,
for some $v_1 \in\Theta_X$ and $v_2 \in \Theta_Y$, we also have
$$
\pi (\omega u)=F^*(\omega)(f_*(v_1)-f^*(v_2)).
$$
Since $F^*\debar=\debar F^*$, $\pi$ is a morphism of complexes.

Let $\mathcal{L}$ be the kernel of $\pi$:
\begin{equation}\label{equa def L con f^*T_X}
0 \lrg \mathcal{L}\stackrel{h}{ \lrg} \shA^{0,*}_{Z}(\Theta_{Z})
\stackrel{\pi}{\lrg} \shA_X^{0,*}(f^*\Theta_Y) \lrg 0
\end{equation}
and $h:\mathcal{L}\lrg \shA^{0,*}_{Z}(\Theta_{Z}) $ the inclusion.

Since there is a canonical  isomorphism between  the normal bundle
$N_{\Gamma|Z}$ of $\Gamma$ in $Z$  and the pull-back $f^*T_Y$,
(\ref{equa def L con f^*T_X}) reduces to
$$
0 \lrg \mathcal{L}\stackrel{h}{ \lrg} \shA^{0,*}_{Z}(\Theta_{Z})
\stackrel{\pi}{\lrg} \shA_\Gamma^{0,*}(N_{\Gamma|Z}) \lrg 0.
$$

Let $i:\Gamma \lrg Z$ be the inclusion and $i^*:\shA^{0,*}_{Z}
\lrg \shA^{0,*}_{\Gamma} $ the induced map. Suppose that
$z_1,\ldots,z_n$ are holomorphic coordinates on $Z$ such that
$Z\supset \Gamma=\{z_{t+1}=\cdots=z_n=0\}$. Then, $\displaystyle
\omega=\sum_{j=1}^n \omega_j \frac{\de}{\de z_j} \in
\shA^{0,*}_{Z}(\Theta_{Z})$ lies in $\mathcal{L }$   if and only
if  $\omega_j \in\ker i^*$ for $j \geq t+1$. In particular,
$\mathcal{L }^0$ is the sheaf of differentiable vector fields on
$Z$ that are tangent to $\Gamma$.

\begin{lem}\label{lemm L manetti is DGLA}
$\mathcal{L }$ is a sheaf of differential graded Lie subalgebras
of $\shA_Z^{0,*}(\Theta_Z)$ such that $\bl_a (\ker i^*)\subset
\ker i^*$ if and only if $a \in \mathcal{L }\subset
\shA_Z^{0,*}(\Theta_Z)$. Moreover,  consider the  automorphism
$e^a$ of $\shA _Z^{0,*} \otimes A$ defined in (\ref{equa def di
e^a su shA^(0,*)tensor A}): if $a \in  \mathcal{L }^0\otimes m_A$
then $e^a(\ker (i^*)\otimes A)=\ker (i^*) \otimes A$.
\end{lem}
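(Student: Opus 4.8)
First I would reduce everything to a local computation in holomorphic coordinates $z_1,\dots,z_n$ on $Z$ adapted to $\Gamma=\{z_{t+1}=\cdots=z_n=0\}$, exploiting the local description recalled just before the statement: a section $\omega=\sum_j\omega_j\frac{\de}{\de z_j}$ of $\shA^{0,*}_Z(\Theta_Z)$ lies in $\mathcal{L}$ if and only if $\omega_j\in\ker i^*$ for $j>t$. I will use two elementary remarks about $\ker i^*$ throughout: locally, as an ideal of $\shA^{0,0}_Z$ it is generated by the functions $z_l,\bar z_l$ with $l>t$, and as a subsheaf of $\shA^{0,*}_Z$ it moreover contains $d\bar z_l$ for $l>t$; consequently, for $i\le t$ the operator $\de/\de z_i$ maps $\ker i^*$ into itself, since it annihilates all of these generators. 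Note at the outset that $\mathcal{L}$ is automatically a graded subsheaf and a subcomplex of $\shA^{0,*}_Z(\Theta_Z)$, being the kernel of the morphism of complexes $\pi$; so the only DGLA axiom still needing an argument is closure under the bracket, which I shall deduce from the asserted equivalence.

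For the equivalence $\bl_a(\ker i^*)\subseteq\ker i^*\Longleftrightarrow a\in\mathcal{L}$, the point is that on $(0,*)$-forms $\bl_a$ reduces to $\bl_a(\psi)=(-1)^{\deg a}\,a\contr\de\psi$, because $a\contr\psi=0$. For the ``if'' direction I would write a local section $\phi\in\ker i^*$ as a finite sum of terms of the form $z_l\eta$, $\bar z_l\eta$ and $d\bar z_l\wedge\eta$ with $l>t$ and $\eta\in\shA^{0,*}_Z$, apply $\bl_a$, and use the Leibniz rule for $\de$ and for $\contr$ together with $a\contr dz_j=a_j$ (the $\de/\de z_j$-component of $a$) and $a\contr d\bar z_j=0$: one gets $\bl_a(\bar z_l\eta)=\pm\bar z_l\,\bl_a(\eta)$ and $\bl_a(d\bar z_l\wedge\eta)=\pm d\bar z_l\wedge\bl_a(\eta)$, which lie in $\ker i^*$ unconditionally, while $\bl_a(z_l\eta)=\pm a_l\wedge\eta+z_l\,\bl_a(\eta)$, whose last summand is in $\ker i^*$ because $z_l\in\ker i^*$ and whose first summand is in $\ker i^*$ precisely when $a\in\mathcal{L}$, i.e. when $a_l\in\ker i^*$ for $l>t$. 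For the ``only if'' direction I would test on $z_m\in\ker i^*$ with $m>t$: $\bl_a(z_m)=(-1)^{\deg a}a\contr dz_m=\pm a_m$, so if $\bl_a$ preserves $\ker i^*$ then $a_m\in\ker i^*$ for every $m>t$, that is $a\in\mathcal{L}$.

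Closure of $\mathcal{L}$ under the bracket then follows formally: $\bl$ is an injective morphism of sheaves of DGLAs, so $\bl_{[a,b]}=[\bl_a,\bl_b]$, and if $a,b\in\mathcal{L}$ then $\bl_a,\bl_b$ preserve $\ker i^*$ by the equivalence, hence so does their graded commutator, hence $[a,b]\in\mathcal{L}$, again by the equivalence. (Alternatively one can check $[\mathcal{L},\mathcal{L}]\subseteq\mathcal{L}$ straight from the coordinate formula for the Kodaira--Spencer bracket, the only input being once more that $\de/\de z_i$ with $i\le t$ preserves $\ker i^*$.) For the last assertion, I would fix $(A,m_A)\in\Art$ and $a\in\mathcal{L}^0\otimes m_A$; the equivalence extended $A$-linearly gives $\bl_a(\ker i^*\otimes A)\subseteq\ker i^*\otimes A$, whence $\bl_a^n(\ker i^*\otimes A)\subseteq\ker i^*\otimes A$ for all $n$ and therefore $e^a(\ker i^*\otimes A)\subseteq\ker i^*\otimes A$ by the definition $e^a=\sum_n\bl_a^n/n!$; since $e^a$ is invertible with inverse $e^{-a}$ and $-a$ again lies in $\mathcal{L}^0\otimes m_A$, the reverse inclusion holds as well, so $e^a(\ker i^*\otimes A)=\ker i^*\otimes A$.

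I do not expect a real obstacle: the statement is local and its substance is the bookkeeping in the ``if'' direction of the equivalence --- keeping track of the signs in $\bl_a(\psi)=(-1)^{\deg a}a\contr\de\psi$ and in the Leibniz rules, and handling the three types of local generators of $\ker i^*$ separately (the anti-holomorphic generators $\bar z_l$, $d\bar z_l$ in fact behaving better than $z_l$). The one point worth flagging is that the Kodaira--Spencer bracket is only the holomorphic part of the Lie bracket of vector fields, so the familiar fact that the vector fields tangent to a submanifold form a Lie subalgebra is not literally what is used; the argument through $\bl$ avoids this, and the direct computation goes through because only tangential holomorphic derivatives $\de/\de z_i$ with $i\le t$ appear.
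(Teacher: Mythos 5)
Your argument is correct and is precisely the ``easy calculation in local holomorphic coordinates'' that the paper defers to \cite[Section~5]{bib ManettiPREPRINT}: the characterization of $\ker i^*$ by the local generators $z_l,\bar z_l,d\bar z_l$ ($l>t$), the reduction of $\bl_a$ on $(0,*)$-forms to $(-1)^{\deg a}a\contr\de$, and the formal deduction of bracket-closure and of the statement about $e^a$ from the equivalence are exactly the intended route. No gaps; you have simply written out the computation the paper omits.
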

\begin{proof}
See \cite[Section~5]{bib ManettiPREPRINT}. It is an easy
calculation in local holomorphic coordinates.

\end{proof}

Let $L$ be the differential graded  Lie algebra of global sections
of $\mathcal{L}$.

\smallskip

Let $M$ be the Kodaira-Spencer algebra of the product $Z$, i.e.
$M=KS_Z$, and $h:L \lrg M$ be the inclusion.
\smallskip

Let $N=KS_X \times KS_Y$ be the product of the Kodaira-Spencer
algebras of $X$ and of $Y$  and  $g=p^*+q^*:KS_X \times KS_Y\lrg
KS_{Z}$, i.e., $g(n_1,n_2)=p^*n_1 +q^*n_2$ (for $n=(n_1,n_2)$ we
also use the notation $g(n)$).

Therefore, we get the diagram
\begin{center}
\begin{equation}\label{equa diagrL-> KS(XxY) <-KS(X)xKS(Y)}
\xymatrix{ & & L \ar@{^{(}->}[d]^h   \\
N=KS_X \times KS_Y \ar[rr]^{g=(p^*,q^*)} &  & M= KS_{Z}.
\\ }
\end{equation}
\end{center}

\begin{oss}

Given morphisms of DGLAs $h:L \lrg {KS} _{Z}$ and $g:KS_X\times
KS_Y \lrg {KS} _{Z}$, we can consider the complex
$(\cil^\cdot,D)$, with $C^i_{(h,g)}=L^i \oplus KS_X^i\oplus KS_Y^i
\oplus {KS}^{i-1}_{Z}$ and differential is given by $D(l,n_1,n_2,m)=(-\debar
l, -\debar n_1, -\debar n_2, \debar m -p^*n_1 -q^* n_2 + h(l))$.

Using the morphism $\pi : KS_{X\times Y} \lrg
A_X^{0,*}(f^*\Theta_Y)$, we can define a morphism
$$
\beta :(\cil^\cdot, D) \lrg (B^\cdot, D_{\debar}),
$$
$$
\beta (l,n_1,n_2,m)=((-1)^i n_1,(-1)^i n_2,-\pi (m)) \qquad
\forall \ (l,n_1,n_2,m)\in \cil^i.
$$

\begin{lem}\label{lem isomor complesso Namba con Cono}
$\beta:(\cil^\cdot, D) \lrg (B^\cdot, D_{\debar}) $ is a morphism
of complexes which is a quasi-isomorphism.

\end{lem}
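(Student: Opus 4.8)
The plan is to show that $\beta$ commutes with the differentials and then that it induces an isomorphism on cohomology, the latter by a five-lemma argument comparing two long exact sequences.

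\textbf{Compatibility with differentials.}
First I would verify that $\beta \circ D = D_{\debar} \circ \beta$ on each $\cil^i$. Writing $\beta(l,n_1,n_2,m) = ((-1)^i n_1, (-1)^i n_2, -\pi(m))$ and recalling $D(l,n_1,n_2,m) = (-\debar l, -\debar n_1, -\debar n_2, \debar m - p^*n_1 - q^*n_2 + h(l))$, one side gives $((-1)^{i+1}(-\debar n_1), (-1)^{i+1}(-\debar n_2), -\pi(\debar m - p^*n_1 - q^*n_2 + h(l)))$. For the other side, $D_{\debar}(x,y,z) = (\debar x, \debar y, \debar z + (-1)^p(f_*x - f^*y))$ applied to $\beta(l,n_1,n_2,m)$ gives $((-1)^i \debar n_1, (-1)^i \debar n_2, -\debar \pi(m) + (-1)^i(f_*((-1)^i n_1) - f^*((-1)^i n_2)))$. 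The first two components match since $(-1)^{i+1}(-1) = (-1)^i$. For the third, one uses that $\pi$ is a morphism of complexes (so $\pi \debar = \debar \pi$), that $\pi \circ h = 0$ (since $h$ is the inclusion of $\mathcal{L} = \ker \pi$, so the term $h(l)$ dies), and the defining identity $\pi(\omega u) = F^*(\omega)(f_*(v_1) - f^*(v_2))$ with $u = p^*v_1 + q^*v_2$, which gives $\pi(p^* n_1 + q^* n_2) = f_* n_1 - f^* n_2$. Collecting signs, both third components equal $-\debar\pi(m) + (-1)^i f_* n_1 - (-1)^i f^* n_2$, so $\beta$ is a morphism of complexes.

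\textbf{Quasi-isomorphism.}
The cleanest route is to compare the short exact sequence $0 \to (M^{\cdot-1}, -\debar) \to (\cil^\cdot, D) \to (L^\cdot \oplus N^\cdot, \debar) \to 0$ (here $M = KS_Z$, $N = KS_X \times KS_Y$) with an analogous short exact sequence for $B^\cdot$. Indeed, $B^\cdot$ fits into $0 \to (A_X^{0,\cdot-1}(f^*\Theta_Y), \debar) \to (B^\cdot, D_{\debar}) \to (KS_X \oplus KS_Y, \debar) \to 0$, the first map being inclusion into the third summand and the second being projection onto the first two. The morphism $\beta$ maps the first sequence to the second: on the sub-object it is $-\pi : M^{\cdot-1} = KS_Z^{\cdot-1} \to A_X^{0,\cdot-1}(f^*\Theta_Y)$ (twisted by $(-1)^i$), and on the quotient it is the projection $L^\cdot \oplus KS_X^\cdot \oplus KS_Y^\cdot \to KS_X^\cdot \oplus KS_Y^\cdot$ (again up to sign). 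By the five lemma applied to the resulting ladder of long exact sequences in cohomology, it suffices to know that $\beta$ induces isomorphisms on the cohomology of the sub- and quotient complexes. On the quotient side, the map $L^\cdot \oplus KS_X^\cdot \oplus KS_Y^\cdot \to KS_X^\cdot \oplus KS_Y^\cdot$ is \emph{not} a quasi-isomorphism by itself, so I would instead run the five lemma with the \emph{three-term} comparison using all of $L$: better, note that $-\pi : KS_Z \to A_X^{0,\cdot}(f^*\Theta_Y)$ is surjective with kernel $\mathcal{L}$'s global sections $L$, so $(M^{\cdot-1}, -\debar)$ maps onto $(A_X^{0,\cdot-1}(f^*\Theta_Y),\debar)$ with kernel $L^{\cdot-1}$; hence the right choice of diagram is to compare $0 \to L^{\cdot-1} \to M^{\cdot-1} \to A_X^{0,\cdot-1}(f^*\Theta_Y) \to 0$ sitting inside $\cil^\cdot$. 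Concretely, I would filter both $\cil^\cdot$ and $B^\cdot$ compatibly and check the induced map is a quasi-isomorphism on each graded piece — the pieces being (shifts of) $L$, $KS_X$, $KS_Y$, which appear identically on both sides up to sign — so the map on the spectral sequences (or on the pieces of a two-step filtration) is an isomorphism, forcing $\beta$ to be a quasi-isomorphism by the usual comparison argument.

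\textbf{Main obstacle.}
The routine part is the sign bookkeeping in the diagram-chase; the genuine content is organizing the comparison so that the five lemma applies, i.e.\ realizing that $\cil^\cdot$ and $B^\cdot$ carry filtrations whose associated graded complexes are literally the same (the complexes $L$, $KS_X[\,\cdot\,]$, $KS_Y[\,\cdot\,]$, with $KS_Z$ split off against $A_X^{0,\cdot}(f^*\Theta_Y)$ via the surjection $\pi$ with kernel $L$). Once that structural observation is in place, exactness of $0 \to L \to KS_Z \xrightarrow{\pi} A_X^{0,\cdot}(f^*\Theta_Y) \to 0$ (which is (\ref{equa def L con f^*T_X}) on global sections, using that $Z$ is compact so $A^{0,*}$-resolutions behave well) makes the matching piece-by-piece automatic, and the conclusion follows.
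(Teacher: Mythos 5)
Your argument is correct, and since the paper disposes of this lemma with ``it follows from an easy computation,'' your write-up supplies exactly the intended content. Two small remarks. In the check that $\beta$ commutes with the differentials, the two third components you display do agree, but the common value is $-\debar\pi(m)+f_*n_1-f^*n_2$ with no residual sign: the factor $(-1)^p$ in $D_{\debar}$ hits $(-1)^in_1$ and $(-1)^in_2$, so the signs cancel rather than survive in your ``collected'' expression. For the quasi-isomorphism step, your pivot away from the naive five-lemma comparison is the right move, and the diagram you end up with is precisely Lemma~\ref{lem h inj then C_(h,g)=C_pi dopo g} in disguise: the subcomplex $L^\cdot\oplus\{0\}\oplus\{0\}\oplus KS_{X\times Y}^{\cdot-1}\subset\cil^\cdot$ is the suspended mapping cone of the injection $h$, hence quasi-isomorphic via $-\pi$ to its cokernel $A_X^{0,\cdot-1}(f^*\Theta_Y)\subset B^\cdot$ --- here one needs (\ref{equa def L con f^*T_X}) to remain exact on global sections, which holds because the sheaves involved are fine ($\shA_Z^{0,0}$-modules admitting partitions of unity), not because $Z$ is compact --- while the induced map on the quotients is the isomorphism $(n_1,n_2)\mapsto((-1)^in_1,(-1)^in_2)$ between $(KS_X^\cdot\oplus KS_Y^\cdot,-\debar)$ and $(A_X^{0,\cdot}(\Theta_X)\oplus A_Y^{0,\cdot}(\Theta_Y),\debar)$; the five lemma then closes the argument. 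The only imprecise phrase is that the associated graded pieces are ``literally the same'': $L$ is not a graded piece of $B^\cdot$, it is absorbed against $KS_{X\times Y}$ on the $\cil^\cdot$ side, which is exactly what the cone-versus-cokernel quasi-isomorphism accounts for. With that rephrasing the proof is complete.
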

\begin{proof}
It follows from an easy computation.

\end{proof}
\end{oss}

Let us consider the functor $\Def_{(h,g)}$ associated with diagram
(\ref{equa diagrL-> KS(XxY) <-KS(X)xKS(Y)}).  Since $h$ is
injective and $M$ is concentrated in non negative degrees, by
Remark~\ref{oss DEF_(h,g) con h iniettivo}, for each $(A,m_A) \in
\Art$, we have
$$
\Def_{(h,g)}(A)=\{(n,e^m) \in  (N^1 \otimes m_A)\times
\operatorname{exp}(M^0 \otimes m_A)  |
$$
$$
dn +\frac{1}{2}[n,n]=0,\  e^{-m}*g(n) \in L^1 \otimes m_A \}/\sim,
$$
where $(x,e^p)\sim (e^b*x,e^{g(b)} e^p e^{a})$, with $ a \in L^0
\otimes m_A$ and $b \in N^0 \otimes m_A$.

\begin{oss}\label{oss (n,e^m) DEF(h,g)-> def T in def Xx defY}
Let $(n,e^m) \in \Def_{(h,g)}$. In particular, $n=(n_1,n_2)$
satisfies the Maurer-Cartan equation and so $n_1 \in \MC_{KS_X}$
and $n_2 \in \MC_{KS_Y}$. Therefore, there are associated with $n$
infinitesimals deformations $X_A$ of $X$ (induced by $n_1$) and
$Y_A$ of $Y$ (induced by $n_2$). Moreover, since $g(n)$ satisfies
the Maurer-Cartan equation in $M=KS_{Z}$, it defines an
infinitesimal deformation $Z _A$ of $ Z$. By construction, the
deformation $ Z _A$ is the product of the deformations $X_A$ and
$Y _A$.

\end{oss}

\medskip

Consider an infinitesimal deformation of the holomorphic map $f$
over $\Spec(A)$ as an infinitesimal deformation $ \Gamma_A$ of
$\Gamma$ over $\Spec(A)$ and $Z_A$ of $Z$ over $\Spec(A)$, with
$Z_A$ product of deformations of $X$ and  $Y$ over $\Spec(A)$.

By applying  Remark~\ref{oss (n,e^m) DEF(h,g)-> def T in def Xx
defY} and Theorem~\ref{teo def_k =Def_X}, the condition on the
deformation $Z_A$ is equivalent to requiring
$\shO_{Z_A}=\shO_{Z_A}(g(n))$, for some Maurer-Cartan element
$n\in  KS_X \times KS_Y$. Let $i^*:\shA_{Z}^{0,*} \lrg
\shA^{0,*}_\Gamma$ be the restriction morphism and let $\shI=\ker
i^* \cap \shO_{Z}$ be the holomorphic ideal sheaf of the graph
$\Gamma $ of $f$ in $Z$. The deformations $\Gamma_A$ of the graph
$\Gamma$ correspond to infinitesimal deformations $\shI_A\subset
\shO_{Z_A} $ of $\shI$ over $\Spec(A)$, with $\shI_A$ ideal
sheaves of $\shO_{Z_A}$, flat over $A$ and such that
$\shI_A\otimes _A \C\cong \shI$.

In conclusion, to give an infinitesimal deformation of $f$ over
$\Spec(A)$ (an element in $\Def(f)(A)$), it is sufficient to give
an ideal sheaf $\shI_A\subset \shO_{Z_A}(g(n))$ (for some $n\in
\MC_{KS_X \times KS_Y}$)  with $\shI_A$ $A$-flat and
$\shI_A\otimes _A \C\cong \shI$.

\begin{teo}\label{teo Def_(h,g)  Def (f)}
Let $h,g$ and $i^*$ be   as above. Then, there exists an
isomorphism of functors
$$
\gamma: \Def_{(h,g)}\lrg \Def (f).
$$
Given a local Artinian $\mathbb{C}$-algebra $A$ and  an element
$(n,e^m) \in \MC_{(h,g)}(A)$, we define a deformation of $f$ over
$\Spec(A)$ as a deformation $\shI_A(n,e^m)$ of the holomorphic
ideal sheaf of the graph  of $f$ in the following way
$$
\gamma(n,e^m)=\shI_A(n,e^m):=(\ker(\shA_{Z}^{0,0}\otimes A
\stackrel{\debar+\bl_{g(n)}}{\lrg} \shA^{0,1}_{Z}\otimes A))\cap
e^m (\ker i^* \otimes A) 
$$
$$
=\shO_{Z_A}(g(n))\cap e^m(\ker i^* \otimes A),
$$
where $\shO_{Z_A}(g(n))$ is the infinitesimal deformation of $Z$,
given by Theorem~\ref{teo def_k =Def_X},  that corresponds to
$g(n)\in MC_{KS_{X \times Y}}$.
\end{teo}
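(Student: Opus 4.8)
The plan is to construct the inverse map $\gamma^{-1}$ explicitly and check that $\gamma$ and $\gamma^{-1}$ are well-defined natural transformations that are mutually inverse. The starting point is the discussion preceding the theorem: an element of $\Def(f)(A)$ is (by Remark~\ref{oss defo mapp=Def grafo prod defX.defY} together with Theorem~\ref{teo def_k =Def_X}) the data of a deformation $Z_A$ of $Z$ that is a product of deformations of $X$ and $Y$, together with an $A$-flat ideal sheaf $\shI_A\subset\shO_{Z_A}$ with $\shI_A\otimes_A\C\cong\shI$. Writing $Z_A=\shO_{Z_A}(g(n))$ for some $n\in\MC_{KS_X\times KS_Y}(A)$ (possible by Theorem~\ref{teo def_k =Def_X}), I first need to show that $\shI_A(n,e^m)$ as defined in the statement really is such an ideal sheaf: it is a sheaf of ideals in $\shO_{Z_A}(g(n))$ because $\ker i^*$ is an ideal in $\shA_Z^{0,0}$ and $e^m$ is an automorphism compatible with the $\shO_{Z_A}(g(n))$-module structure (this uses Lemma~\ref{lemm L manetti is DGLA} and Lemma~\ref{lemma e^a(d+l_x)e^-a = d+e^a *x}), and $A$-flatness together with the reduction $\shI_A\otimes_A\C\cong\shI$ follow from the fact that $e^m$ reduces to the identity modulo $m_A$ and from the corresponding properties of $\shO_{Z_A}(g(n))$.

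Next I would verify that $\gamma$ is well-defined on equivalence classes, i.e.\ that if $(n,e^m)\sim(n',e^{m'})$ via $a\in L^0\otimes m_A$, $b\in N^0\otimes m_A$ with $n'=e^b*n$ and $e^{m'}=e^{g(b)}e^m e^{a}$, then $\shI_A(n',e^{m'})$ and $\shI_A(n,e^m)$ define isomorphic deformations of $f$. The isomorphism of the ambient deformations $\shO_{Z_A}(g(n))\cong\shO_{Z_A}(g(n'))$ is the one coming from $e^{g(b)}$ via Lemma~\ref{lemma e^a(d+l_x)e^-a = d+e^a *x} applied on $Z$; the point is that under this isomorphism $\shI_A(n,e^m)$ maps to $\shI_A(n',e^{m'})$, which is a direct computation using $e^{g(b)}\circ e^m = e^{m'}\circ e^{h(a)}$ and the fact that $e^{h(a)}=e^a$ preserves $\ker i^*\otimes A$ because $a\in L^0\otimes m_A=\mathcal{L}^0\otimes m_A$ (Lemma~\ref{lemm L manetti is DGLA}). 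One must also check naturality in $A$, which is immediate from the functoriality of all the constructions involved.

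For the inverse, given a deformation of $f$ presented as $\shI_A\subset\shO_{Z_A}(g(n))$ with $\shI_A\otimes_A\C\cong\shI$, I would produce $e^m\in\exp(M^0\otimes m_A)=\exp(A_Z^{0,0}(\Theta_Z)\otimes m_A)$ with $e^m(\ker i^*\otimes A)\supseteq\shI_A$ and such that $e^{-m}*g(n)\in L^1\otimes m_A$; the pair $(n,e^m)$ is then a Maurer-Cartan element for $(h,g)$ mapping to the given deformation, and the $\sim$-class is independent of the choices. Producing such an $e^m$ is essentially a lifting/flatness argument: $\shI_A$ and $e^0(\shI\otimes A)=\shI\otimes A$ agree modulo $m_A$, both are $A$-flat, and one builds the automorphism $e^m$ order by order along a chain of small extensions of $A$ using that $H^0(\mathcal{A}_Z^{0,0}(\Theta_Z))$ acts transitively (infinitesimally) on such deformations of $\shI$ — this is the Manetti-style argument from \cite[Section~5]{bib ManettiPREPRINT} for embedded deformations, now carried over the moving base $\shO_{Z_A}(g(n))$ rather than $\shO_Z\otimes A$.

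I expect the main obstacle to be exactly this last surjectivity step: showing that \emph{every} $A$-flat ideal sheaf $\shI_A\subset\shO_{Z_A}(g(n))$ restricting to $\shI$ is of the form $e^m(\ker i^*\otimes A)\cap\shO_{Z_A}(g(n))$ for a suitable $e^m$ automorphically equivalent to the trivial one, and that $e^{-m}*g(n)$ automatically lands in $L^1\otimes m_A$ (i.e.\ that the gauge-transformed Maurer-Cartan element of $KS_Z$ actually lies in the subalgebra $L$ of vector fields tangent to the deformed graph). The first part is the standard but delicate comparison between the analytic (Dolbeault) description and the sheaf-theoretic flatness description of embedded deformations; the second part follows once one knows that $\shI_A(n,e^m)$ is preserved by $\debar+\bl_{g(n)}$, which forces the derivation defining $e^{-m}*g(n)$ to be tangent to $\ker i^*$, hence (Lemma~\ref{lemm L manetti is DGLA}) to lie in $\mathcal{L}$, globally in $L$. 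Everything else — well-definedness, naturality, injectivity — reduces to bookkeeping with the gauge action and the two cited lemmas.
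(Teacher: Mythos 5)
Your overall architecture for the forward direction matches the paper's, but the plan either under-justifies or defers the real work in two places.

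First, flatness of $\shI_A(n,e^m)=\shO_{Z_A}(g(n))\cap e^m(\ker i^*\otimes A)$ does \emph{not} follow from ``$e^m$ reduces to the identity modulo $m_A$ and the corresponding properties of $\shO_{Z_A}(g(n))$'': an intersection of two $A$-flat submodules of a flat module need not be flat. The paper's actual argument is to pass to $e^{-m}\shI_A(n,e^m)=\ker(\debar+e^{-m}*g(n))\cap(\ker i^*\otimes A)$ and then localize to Stein opens, where the exact sequence relating $\mathcal L$, $\Theta_Z$ and $N_{\Gamma|Z}$ gives $H^1(L)=0$, so $\Def_L$ is the trivial functor; hence $e^{-m}*g(n)=e^{\nu}*0$ for some $\nu\in L^0\otimes m_A$ and the intersection becomes $e^{\nu}(\shI\otimes A)$, which is visibly flat and reduces to $\shI$. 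Without this local triviality of $\Def_L$ your flatness claim has no proof.

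Second, you propose to build an explicit inverse and you correctly flag the surjectivity step --- that every $A$-flat $\shI_A\subset\shO_{Z_A}(g(n))$ restricting to $\shI$ has the form $e^m(\ker i^*\otimes A)\cap\shO_{Z_A}(g(n))$ --- as the main obstacle, but you do not resolve it; the order-by-order transitivity argument you allude to is exactly the content that remains to be proved. The paper sidesteps this route entirely: it establishes (i) injectivity of $\gamma$, (ii) bijectivity on tangent spaces and (iii) injectivity on obstruction spaces, and invokes the standard smoothness criterion; items (ii) and (iii) come from the quasi-isomorphism $\beta$ of Lemma~\ref{lem isomor complesso Namba con Cono} with Horikawa's complex $B^\cdot$ together with Theorem~\ref{teo TANG e OSTR DEF(f) horikawa}, so no direct surjectivity argument is ever needed. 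Finally, the injectivity you dismiss as ``bookkeeping'' is not: after reducing to $\shO_{Z_A}(g(n))\cap e^m(\ker i^*\otimes A)=\shO_{Z_A}(g(n))\cap e^{m'}(\ker i^*\otimes A)$ one must show that $a$ with $e^a=e^{-m'}e^{m}$ lies in $L^0\otimes m_A$, and the paper does this by an induction over small extensions that uses flatness to lift the generators $z_i$ of the ideal to elements $z_i+\varphi_i$ of the deformed sheaf. As written, your plan leaves both the surjectivity and this induction as genuine gaps.
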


\begin{proof}
For each $(n,e^m) \in \MC_{(h,g)}(A)$ we have defined
$$
\shI_A(n,e^m) =\shO_{Z_A}(g(n))\cap e^m(\ker i^* \otimes A).
$$
First of all, we verify that this sheaf $\shI_A(n,e^m)\subset
\shO_{Z_A}(g(n))$ defines an infinitesimal  deformation of $f$;
therefore, we need to prove that it is flat over $A$ and
$\shI_A(n,e^m)\otimes _A \C\cong \shI$. It is equivalent to verify
these properties for $e^{-m}\shI_A(n,e^m)$. Applying
Lemma~\ref{lemma e^a(d+l_x)e^-a = d+e^a *x}, yields
$$
e^{-m}(\shO_{Z_A}(g(n)))=\ker(\debar + e^{-m} *g(n):
\shA_Z^{0,0}\otimes A \lrg \shA_Z^{0,1}\otimes A)
$$
and also
$$
e^{-m}\shI_A(n,e^m)=e^{-m}(\shO_{Z_A}(g(n)))\cap (\ker i^* \otimes
A)=
$$
$$
=\ker(\debar +e^{-m}*g(n))\cap(\ker i^* \otimes A).
$$
Since flatness is a local property, we can assume that $Z$ is a
Stein manifold. Then $H^1(Z,\Theta_Z)=0$ and $H^0(Z,\Theta_Z) \lrg
H^0(Z,N_{\Gamma|Z})$ is surjective. Since the following sequence
is exact
$$
\cdots \lrg H^0(Z,\Theta_Z) \lrg H^0(Z,N_{\Gamma|Z}) \lrg H^1(Z,L)
\lrg H^1(Z,\Theta_Z)\lrg \cdots,
$$
we conclude that  $H^1(L)=0$ or, equivalently, that the tangent
space of the functor $\Def_{L }$ is trivial. Therefore, $\Def_{L
}$ is the trivial functor.

This implies the existence of $\nu\in L^0 \otimes m_A$ such that
$e^{-m}*g(n)=e^\nu*0$ (by hypothesis, $e^{-m}*g(n)$ is a solution
of  the Maurer-Cartan equation  in $L$). Moreover, we recall that
$e^a(\ker  i^* \otimes A)=\ker i^* \otimes A$, for each $a \in
L^0\otimes m_A$.

Therefore,
$$
e^{-m}\shI_A(n,e^m)=\ker(\debar +e^{\nu}*0)\cap(\ker i^* \otimes
A)=\shO_{Z_A}(e^\nu *0)\cap (\ker i^* \otimes A)
$$
$$
=e^\nu(\shO_{Z_A}(0))\cap e^\nu(\ker i^* \otimes
A)=e^\nu(\shI\otimes A).
$$
Thus, $\shI_A(n,e^m)$  defines a deformation of $f$ and the
morphism
$$
\gamma: \MC_{(h,g)}\lrg \Def (f)
$$
is well defined, such that
$$
\gamma(A): \MC_{(h,g)}(A)\lrg \Def (f)(A)
$$
$$
(n,e^m) \longmapsto \gamma(n,e^m)=\shI_A(n,e^m).
$$

Moreover, $\gamma$ is  well defined on
$\Def_{(h,g)}(A)=\MC_{(h,g)}(A)/gauge$. Actually, for each $a \in
L^0 \otimes m_A$ and $b \in N^0\otimes m_A$,  we have
$$
\gamma(e^{b}*n,e^{g(b)}e^m e^{a})= \shO_{Z_A}(e^{g(b)}*g(n)) \cap
e^{g(b)}e^m e^{a}(\ker i^* \otimes A)=
$$
$$
e^{g(b)}\shO_{Z_A}(g(n)) \cap e^{g(b)}e^m (\ker i^* \otimes
A)=e^{g(b)}\gamma(n,e^m).
$$
This implies that the deformations $\gamma(n,e^m)$ and $\gamma(
e^b*n,e^{g(b)}e^m e^{a})$ are isomorphic (Remark~\ref{oss defo
mapp=Def grafo prod defX.defY}).

In conclusion, $\gamma: \Def_{(h,g)} \lrg \Def(f)$ is a well
defined natural transformation of functors.

In order to prove that $\gamma$ is an isomorphism   it is
sufficient to prove that
\begin{itemize}
  \item[i)] $\gamma$ is injective;
  \item[ii)] $\gamma$ induces a bijective map on the tangent
  spaces;
  \item[iii)] $\gamma$ induces an injective map on the
  obstruction spaces.
\end{itemize}

\medskip

i)  $\gamma\  is\  injective$. Suppose that $\gamma(n,e^m)=
\gamma(r,e^s)$,  then we want to prove that $(n,e^m)$ is gauge
equivalent to $ (r,e^s)$, i.e., there exist $a \in L^0 \otimes
m_A$ and $b \in N^0 \otimes m_A$ such that $e^b*r=n$ and
$e^{g(b)}e^se^a=e^m$.

By hypothesis, $\gamma(n,e^m)$ and $ \gamma(r,e^s)$ are isomorphic
deformations; then, in particular, the deformations induced on $Z$
are isomorphic. This implies that there exists $b \in N^0 \otimes
m_A$ such that $e^b*r=n$ and   $e^{g(b)}(\shO_{Z_A}(g(r)) )
=\shO_{Z_A}(g(n))$. Up to substituting $(r,e^s)$ with its
equivalent $(e^b*r,e^{g(b)}e^s)$, we can assume   to be  in the
following situation
$$
\shO_{Z_A}(g(n))\cap e^m(\ker i^* \otimes A)=\shO_{Z_A}(g(n))\cap
e^{m'} (\ker i^* \otimes A).
$$
Let $e^{a}=e^{-m'}e^{m}$, then
$$
e^{a}(e^{-m}(\shO_{Z_A}(g(n)))\cap (\ker i^* \otimes
A))=e^{-m'}(\shO_{Z_A}(g(n)))\cap (\ker i^* \otimes A).
$$
In particular, $e^a(e^{-m} (\shO_{Z_A}(g(n)))\cap (\ker i^*
\otimes A)) \subseteq \ker i^* \otimes A$.

Next, we prove, by induction, that $a \in L^0 \otimes m_A$ (thus
$e^m= e^{m'}e^a=e^{g(b)}e^se^a$).

Let $z_1,\ldots,z_n$ be holomorphic coordinates on $Z$ such that
$Z\supset \Gamma=\{z_{t+1}=\cdots=z_n=0\}$. Consider the
projection on the residue field
$$
e^{-m}(\shO_{Z_A}(g(n)))\cap (\ker i^* \otimes A )\lrg \shO_Z \cap
\ker i^*.
$$
Then, $z_i\in \ker i^*\cap \shO_Z$, for $i >t$. Since
$e^{-m}(\shO_{Z_A}(g(n)))\cap (\ker i^* \otimes A )$ is flat over
$A$, we can lift $z_i$ to $\tilde{z}_i=z_i + \varphi_i\in
e^{-m}(\shO_{Z_A}(g(n)))\cap (\ker i^* \otimes A ) $, with
$\varphi_i \in \ker i^* \otimes m_A$. By hypothesis,
\begin{equation}\label{equa e^a(ztilde)=e^a(z)+e^a(phi)}
e^a(\tilde{z}_i)=e^a(z_i)+e^a(\varphi_i) \in \ker i^*\otimes A.
\end{equation}
By Lemma~\ref{lemm L manetti is DGLA}, in order to prove that
$a\in L^0 \otimes m_A$ it is sufficient to verify that
$e^a(z_i)\in \ker i^*\otimes A $ and so, by (\ref{equa
e^a(ztilde)=e^a(z)+e^a(phi)}), that $e^a(\varphi_i) \in \ker i^*
\otimes A $.

If $A=\C[\epsi]$, then $\varphi_i \in \ker i^* \otimes \C\epsi$
and $ a \in \shA^{0,0}_Z \otimes \C\epsi$; this implies
$e^a(\varphi_i)=\varphi_i \in \ker i^* \otimes \C\epsi$.

Next, let $0 \lrg J \lrg {\tilde{A}} \stackrel{\alpha}{\lrg} A\lrg
0$ be a small extension. By hypothesis, $\alpha(a) \in L^0 \otimes
m_A$, that is, $\displaystyle \alpha(a)=\sum_{j=1}^n
\overline{a}_j \frac{\de}{\de z_j}$ with $\overline{a}_j \in \ker
i^* \otimes m_A$ for $j >t$. Let $a'_j$ be liftings of
$\overline{a}_j$. Then $a'_j \in \ker i^* \otimes m_{\tilde{A}}$,
for $j >t$ , $\displaystyle a'= \sum_{j=1}^n a'_j \frac{\de}{\de
z_j} \in L^0 \otimes m_{\tilde{A}} $ and $e^{a'}(\varphi_i)\in
\ker i^* \otimes m_{\tilde{A}}$. Since $\alpha(a)=\alpha(a')$,
then $a=a'+j$ with $j \in M^0 \otimes J$. This implies that
$e^a(\varphi_i) =e^{a'+j} (\varphi_i)= e^{a'}(\varphi_i) \in \ker
i^* \otimes m_{\tilde{A}}$.

\bigskip

As to $ii)$ and $iii)$, a straightforward computation shows that
the maps induced by $\gamma$ on tangent and obstruction spaces are
the isomorphisms induced by $\beta$ of Lemma~\ref{lem isomor
complesso Namba con Cono}.

\end{proof}

\begin{oss}
Consider the diagram
\begin{center}
$\xymatrix{  & & L \ar[d]^h   & \\
KS_{X} \times  KS_Y \ar[rr]^{g}  \ar@/^/[rrrd]_{\pi \circ g}& &
KS_{X \times Y}\ar[dr]^\pi &    \\
& & & A_X^{0,*}(f^*T_Y). &
 \\ }$
\end{center}

\noindent Since $h$ is injective, Lemma~\ref{lem h inj then C_(h,g)=C_pi
dopo g} implies the existence of    a quasi-isomorphism of
complexes $(\cil^\cdot,D)$ and $(C^\cdot_{\pi \circ
g},\check{\delta})$.

Then, we get the following exact sequence
\begin{equation}\label{succ esatta mia con ro uguale ziv}
\cdots \lrg H^1(C^\cdot_{\pi \circ g} )\stackrel{\varrho^1}{\lrg}
H^1(X,\Theta_X)\oplus H^1(Y,\Theta_Y)\lrg H^1(X,f^*\Theta_Y)\lrg
\end{equation}
$$
\qquad \lrg H^2(C^\cdot_{\pi \circ g} )\stackrel{\varrho^2}{\lrg}
H^2(X,\Theta_X)\oplus H^2(Y,\Theta_Y)\lrg H^2(X,f^*\Theta_Y)\lrg
\cdots ,
$$
where $\varrho^1$ and $\varrho^2$ are the projections on the
second factor and they are induced by the projection morphism
$\varrho:\Def(f) \lrg \Def_{KS_{X} \times  KS_Y }$ (see
Remark~\ref{oss proj varrho induce no EXT Def(h,g)->Def_N}).

In particular, $\varrho:\Def(f) \lrg \Def_{KS_{X} \times  KS_Y }$
associates with an infinitesimal deformation of $f$ the induced
infinitesimal deformations of $X$ and  $Y$.

Therefore, $\varrho^1$ associates with a first order deformation
of $f$ the induced first order deformations of $X$ and $Y$ and
$\varrho^2$ is a morphism of obstruction theories: the obstruction
to deform $f$ is mapped to the induced obstructions to deform $X$
and $Y$ (see also Remark~\ref{oss (n,e^m) DEF(h,g)-> def T in def
Xx defY}).

\end{oss}

\section{$L_{\infty}$ structure on the cone $\cil^\cdot$}
\label{sezio l infinito cono}

In this section we explicitly describe an $L_{\infty}$ structure
on the cone $\cil^\cdot$, associated with the pair of morphisms of
DGLAs $h:L \lrg M$ and $g:N \lrg M$. In particular, we prove  that
the deformation functor $\Def_{(h,g)}$ coincides with the functor
$\Def^\infty_{C_{(h,g)}^\cdot}$, associated with this $L_\infty$
structure on $\cil^\cdot$ (Theorem~\ref{teo Def INFI cil  =
Def(h,g)}). Finally, we show the existence of a DGLA $\hil$ that
controls the deformations of holomorphic maps (Corollary~\ref{cor
L infini Def_(h,g)iso Def_H(h,g)}).

\bigskip

First of all,  we briefly recall the definition of an $L_{\infty}$
structures on a graded vector space $V$. For a complete
description of such structures, see, for example, \cite{bib
fiorenz-Manet}, \cite{bib fukaya}, \cite{bib kontsevich} or
\cite[Chapter~IX]{bib manRENDICONTi}.

We denote by  $V[1]$  the complex $V$ with degrees shifted by $1$.
More precisely, for $\C[1]$ we have
$$
\C[1]^i=
  \begin{cases}
    \C & \text{if $i+1=0$}, \\
    0 & \text{otherwise}.
  \end{cases}
$$
Then, $V[1]=\C[1] \otimes V$, which implies $V[1]^i=V^{i+1}$.

Let  $\epsilon (\sigma; v_1,\ldots , v_n)$ be the Koszul sign. We
denote by $\bigodot^n V$ the space of co-invariant elements for
the action of $\Sigma_n$ on $\otimes^n V$ given by
$$
\sigma (v_1 \otimes \ldots \otimes  v_n)=\epsilon (\sigma;
v_1,\ldots , v_n)\ v_{\sigma(1)} \otimes \ldots \otimes
v_{\sigma(n)}.
$$
When $(v_1, \ldots , v_n)$ are clear by the context we simply
write $\epsilon (\sigma)$ instead of $\epsilon (\sigma; v_1,\ldots
, v_n)$.

\begin{dhef}
The set of {\it unshuffles } of type $(p,q)$ is the subset
$S(p,q)$ of $ \Sigma_n$ of permutations $\sigma$, such that
$\sigma(1)<\sigma(2) < \cdots < \sigma(p)$ and $\sigma(p+1)
<\sigma(p+2) < \cdots < \sigma(p+q)$.
\end{dhef}

\begin{dhef}
An $L_{\infty}$ structure  on a graded vector space $V$ is a
system $\{q_k\}_{k \geq 1}$ of linear maps $q_k \in Hom^1(\odot ^k
(V[1]),V[1])$ such that the map
$$
Q:\bigoplus_{n \geq 1} \bigodot^n V[1] \lrg \bigoplus_{n \geq 1}
\bigodot^n V[1],
$$
defined as
$$
Q(v_1 \odot \ldots \odot  v_n)=\sum_{k=1}^n \sum_{\sigma \in
S(k,n-k)} \epsi(\sigma)q_k( v_{\sigma(1)} \odot \ldots \odot
v_{\sigma(k)})\odot v_{\sigma(k+1)} \odot \ldots \odot
v_{\sigma(n)},
$$
is a co-derivation on the graded co-algebra $\bigoplus_{n \geq 1}
\bigodot^n V[1]$,  i.e.,  $Q \circ Q=0$.
\end{dhef}

\begin{oss}\label{oss DGLA is L infinito}
Let $(L,d,[\, , \, ])$ be a differential graded Lie algebra. Let
$q_1$ be the suspension of the differential $d$, i.e.,
$$
q_1:=d_{[1]}=Id_{\C[1]}\otimes d : V[1] \lrg V[1]
$$
$$
q_1(v_{[1]})=-(dv)_{[1]}.
$$
Then,  define   $q_2 \in Hom^1(\odot ^2 (V[1]),V[1])$  in the
following way
$$
q_2(v_{[1]}\odot w_{[1]})=(-1)^{deg_V v}[v,w]_{[1]}.
$$
Finally, defining $q_k=0$, for each $k \geq 3$, we endow the DGLA
$L$ with an $L_{\infty}$ structure, i.e., \emph{every DGLA is an
$L_{\infty}$-algebra with zero higher multiplications}.
\end{oss}

\begin{exe}\label{exe stru L infin su hil}
Consider the DGLA $M[t,dt]=M \otimes \C[t,dt]$, where $\C[t,dt]$
is the differential graded algebra of  polynomial differential
forms over the affine line. For every $a \in \C$, define the
\emph{evaluation morphism} in the following way
$$
e_a:M[t,dt] \lrg M,
$$
$$
e_a(\sum m_it^i +n_it^i dt)=\sum m_i a^i.
$$
The evaluation morphism is a morphism of DGLAs which is a left
inverse of the inclusion and   it is a surjective
quasi-isomorphism, for each $a$.

Next, define $K \subset L \times N \times M[t,dt] \times M[s,ds]$
as follows
$$
K=\{(l,n,m_1(t,dt),m_2(s,ds))|  \ h(l)=e_1(m_2(s,ds)),
g(n)=e_0(m_1(t,dt))  \}.
$$
$K$ is a DGLA with bracket and differential   defined as the
natural ones on  each component.

Consider the following morphisms of DGLAs:
$$
e_1: K \lrg M, \qquad (l,n,m_1(t,dt),m_2(s,ds))\longmapsto
e_1(m_1(t,dt))
$$
and
$$
e_0: K \lrg M, \qquad (l,n,m_1(t,dt),m_2(s,ds))\longmapsto
e_0(m_2(s,ds)).
$$

Let $H \subset K$  be defined as follow
$$
H=\{k \in K| \ e_1(k)= e_0(k) \},
$$
or written in  detail
$$
H=\{(l,n,m_1(t,dt),m_2(s,ds))\in L \times  N \times M[t,dt]\times
M[s,ds]\ |
$$
$$
h(l)=e_1(m_2(s,ds)), \ g(n)=e_0(m_1(t,dt)),\ e_1(m_1(t,dt))=
e_0(m_2(s,ds)) \}.
$$

\smallskip

\noindent For each $k=(l,n,m_1(t,dt),m_2(s,ds))\in K$, the pair $m_1(t,dt)$
and $m_2(s,ds)$ has fixed values at one of the extremes of the
unit interval. More precisely, the value of $ m_1(t,dt)$ is fixed
at the origin and $ m_2(s,ds)$ is fixed at 1. If $k$ also lies in
$H$, then there are conditions on the other extremes: the value of
$m_1(t,dt)$ at 1 has to coincide with the value of $m_2(s,ds)$ at
0.

Let
\begin{equation}\label{equa def H breve f=e_0 g=e_1}
\hil=
\end{equation}
$$
\{ (l,n,m(t,dt)) \in L\times N \times M[t,dt]\ | \,
h(l)=e_1(m(t,dt)), \ g(n)=e_0(m(t,dt)) \}.
$$
Since $e_i$ are morphisms of DGLAs, it is clear that $\hil$ is a
DGLA.

Moreover, considering the barycentric subdivision, we  get an
injective quasi-isomorphism
$$
\hil \hookrightarrow H,
$$
$$
(l,n,m(t,dt))\longmapsto (l,n,m\,(\, \displaystyle\frac{1}{2}\,
t,dt),m\,(\,\frac{s+1}{2},ds)).
$$

\smallskip

$\hil$ is the \emph{differential graded Lie algebra associated
with the pair $(h,g)$}.

\smallskip

Then, by Remark~\ref{oss DGLA is L infinito}, an $L_{\infty}$
structure on $\hil$ is defined by the following system  of linear
maps $q_k \in Hom^1(\odot ^k (\hil[1],\hil[1])$:
\begin{itemize}
  \item[-] $q_1(l,n,m(t,dt))=(-dl,-dn,-dm(t,dt))$;
  \item[-] $ q_2((l_1,n_1,m_1(t,dt))\odot (l_2,n_2,m_2(t,dt)))=$\\
  $(-1)^{deg_{\hil}(l_1,n_1,m_1(t,dt))}
  ([l_1,l_2],[n_1,n_2],[m_1(t,dt),m_2(t,dt)])$;
  \item[-] $q_k=0$, for every $k \geq 3$.
\end{itemize}
\end{exe}

A \emph{$L_{\infty}$-morphism} $f_{\infty}: (V,q_1,q_2,q_3, \ldots
)\lrg (W,p_1,p_2,p_3, \ldots )$ of $L_{\infty}$-algebras is a
sequence of degree zero linear maps
$$
f_n: \bigodot^n V[1] \lrg W[1], \qquad n\geq 1,
$$
such that the morphism of coalgebra
$$
F: \bigoplus_{n \geq 1} \bigodot^n V[1] \lrg \bigoplus_{n \geq 1}
\bigodot^n W[1],
$$
induced by $\sum_n f_n :\bigoplus_{n \geq 1} \bigodot^n V[1] \lrg
W[1]$, commutes with the codifferentials.

Sometimes this is the definition of a weak  $L_{\infty}$-morphism;
the strong  (or linear)  $L_{\infty}$-morphisms are the ones with
$f_n=0$, for each $n \geq 2$.

In particular, the \emph{linear part} $ f_1:V[1]\lrg W[1] $ of an
$L_{\infty}$-morphism $f_{\infty}: (V,q_1,q_2,q_3, \ldots )\lrg
(W,p_1,p_2,p_3, \ldots )$ satisfies the condition $f_1 \circ q_1
=p_1 \circ f_1$, i.e., $f_1$ is a map of differential complexes
$(V[1],q_1) \lrg (W[1], p_1)$.

A \emph{quasi-isomorphism} of $L_{\infty}$-algebra is an
$L_{\infty}$-morphism, whose linear part is a quasi-isomorphism of
differential complexes.

\medskip

The key result in this setting is the \emph{homotopical transfer
of   $L_\infty$ structures}.

\begin{teo}\label{teo trasfer strutt L infiniti}
Let $(V,q_1,q_2,q_3, \ldots)$ be an $L_\infty$-algebra and $(C,
\delta)$   a differential graded vector space. If there exist two
morphisms
$$
\pi:(V[1],q_1) \lrg (C[1],\delta_{[1]}), \qquad \imath:
(C[1],\delta_{[1]}) \lrg (V[1],q_1)
$$
which are homotopy inverses, then there exists an
$L_\infty$-algebra structure $(C,\langle  \ \rangle_1, \langle \
\rangle_2, \ldots )$ on $C$ extending its differential complex
structure, and making $ (V,q_1,q_2,q_3, \ldots)$ and $(C,\langle \
\rangle_1, \langle \ \rangle_2, \ldots )$ be quasi-isomorphic
$L_\infty$-algebra, via an $L_\infty$-quasi-isomorphism
$\imath_\infty$ extending $\imath$.
\end{teo}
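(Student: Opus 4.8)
The plan is to use the standard dictionary between $L_\infty$-structures and square-zero coderivations of cofree cocommutative coalgebras, and to transport the structure along the homotopy data by the homological perturbation lemma. Recall that an $L_\infty$-structure $\{q_k\}_{k\ge1}$ on $V$ is the same datum as a degree-one coderivation $Q$ with $Q\circ Q=0$ on the cofree conilpotent cocommutative coalgebra $\mathcal S(V[1]):=\bigoplus_{n\ge1}\bigodot^nV[1]$; that an $L_\infty$-morphism is the same as a morphism of such coalgebras commuting with the codifferentials; and that a coderivation, or a coalgebra morphism, with cofree target is freely determined by its components landing in the cogenerators. Thus $(V,q_1,q_2,q_3,\dots)$ is encoded by a codifferential $Q=\widehat{q_1}+\varrho$ on $\mathcal S(V[1])$, where $\widehat{q_1}$ is the coderivation extending $q_1$ and $\varrho$ the one extending $q_2+q_3+\cdots$; with respect to the word-length filtration of $\mathcal S(V[1])$, $\widehat{q_1}$ preserves word length while $\varrho$ strictly lowers it, so $\varrho$ is a locally nilpotent perturbation of $\widehat{q_1}$.

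First I would reduce to the situation of a contraction: by the standard adjustment lemma one may assume the side conditions $K^2=0$, $K\imath=0$, $\pi K=0$ for a homotopy $K\colon V[1]\to V[1]$ with $q_1K+Kq_1=\imath\pi-\operatorname{id}$, and (after replacing $\imath$ by a homotopic map if necessary — harmless, since the $L_\infty$-morphism produced can afterwards be corrected, by the usual homotopy of $L_\infty$-morphisms, to one with linear part $\imath$) that $\pi\circ\imath=\operatorname{id}_{C[1]}$. Applying symmetric powers and the standard symmetrization of the tensor contraction, this induces a contraction of graded coalgebras between $(\mathcal S(C[1]),\widehat{\delta_{[1]}})$ and $(\mathcal S(V[1]),\widehat{q_1})$, with structure maps $\pi_{\mathcal S},\imath_{\mathcal S}$ — the coalgebra morphisms extending $\pi,\imath$ — and a homotopy $K_{\mathcal S}$ which preserves word length. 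Since $\varrho=Q-\widehat{q_1}$ is locally nilpotent, the homological perturbation lemma applies and produces a perturbed contraction in which the differential on $\mathcal S(C[1])$ is
$$
D=\widehat{\delta_{[1]}}+\pi_{\mathcal S}\,\varrho\,(\operatorname{id}-K_{\mathcal S}\,\varrho)^{-1}\,\imath_{\mathcal S},
$$
together with $I_\infty=(\operatorname{id}-K_{\mathcal S}\,\varrho)^{-1}\imath_{\mathcal S}$, a perturbed projection and a perturbed homotopy; in particular $D\circ D=0$ and $I_\infty$ commutes with the codifferentials $D$ and $Q$.

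I would then carry out the one step that is not purely formal: verifying that $D$ is again a \emph{coderivation} and $I_\infty$ a \emph{morphism of coalgebras}. Granting this, the corestriction of $D$ to $C[1]$ has components $\langle\ \rangle_k\in\operatorname{Hom}^1(\bigodot^kC[1],C[1])$ which, because $D\circ D=0$, satisfy the $L_\infty$-relations; since the corrective part of $D$ strictly lowers word length, its word-length-preserving part is $\widehat{\delta_{[1]}}$, so $\langle\ \rangle_1=\delta_{[1]}$ and this $L_\infty$-structure extends the differential complex structure of $C$. Likewise $I_\infty$ encodes an $L_\infty$-morphism $\imath_\infty\colon C\to V$; its linear part is the component $C[1]\to V[1]$ of $I_\infty$, which equals $\imath$ because $\varrho$ vanishes on cogenerators (it needs at least two inputs), so the corrective terms contribute nothing there. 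Hence $\imath_\infty$ is an $L_\infty$-quasi-isomorphism extending $\imath$, and $(V,q_1,q_2,\dots)$ and $(C,\langle\ \rangle_1,\langle\ \rangle_2,\dots)$ are quasi-isomorphic $L_\infty$-algebras.

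The main obstacle is exactly this coalgebra-compatibility of $D$ and $I_\infty$, and I would settle it by the tensor trick: $\varrho$, $K_{\mathcal S}$, $\pi_{\mathcal S}$, $\imath_{\mathcal S}$ are constructed so as to be compatible with the comultiplication in a way that persists, term by term in word length, through the geometric series defining $D$ and $I_\infty$. Equivalently — and this is the form I would actually write down, since it also yields closed formulas for the new brackets — one unwinds the series into sums over rooted trees: $\langle x_1,\dots,x_k\rangle=\sum_T\pm\,\pi\circ\Phi_T(\imath x_1,\dots,\imath x_k)$, where $T$ ranges over rooted trees with $k$ leaves and $\Phi_T$ is the composite obtained by placing $q_j$ at each internal vertex of arity $j$, a copy of $K$ on each internal edge, and feeding the $\imath x_i$ at the leaves; the components of $\imath_\infty$ are the analogous tree sums with $\imath$ in place of $\pi$ at the root. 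Decomposing such a sum according to the subtree carried by the edge at the root yields precisely the co-Leibniz identity (so that $D$ is a coderivation and $I_\infty$ a coalgebra morphism), while $Q\circ Q=0$ translates, tree by tree, into $D\circ D=0$, i.e. into the $L_\infty$-relations for $\{\langle\ \rangle_k\}$.
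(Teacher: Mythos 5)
The paper does not actually prove this theorem: it is quoted as a known result, with the proof delegated to \cite[Theorem~4.1]{bib fiorenz-Manet} (and to Fukaya and Kontsevich). Your argument --- encode the $L_\infty$-structure as a square-zero coderivation on $\bigoplus_n\bigodot^nV[1]$, extend the contraction to symmetric powers by the symmetrized tensor trick, perturb by the locally word-length-lowering part $\varrho$ via the homological perturbation lemma, and verify coderivation/coalgebra-morphism compatibility by unwinding the geometric series into sums over rooted trees decorated with $q_j$ on internal vertices, $K$ on internal edges, $\imath$ on leaves and $\pi$ at the root --- is precisely the standard proof given in those references (Fiorenza--Manetti work directly with the tree sums; the perturbation-lemma packaging is an equivalent reformulation), and it is the same description the paper itself uses afterwards to compute the transferred brackets $\langle\ \rangle_n$ on $\cil^\cdot$. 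So the proposal is correct and in substance identical to the cited proof. The only point deserving a remark is your reduction from ``homotopy inverses'' to a contraction with side conditions $\pi\imath=\operatorname{id}$, $K^2=K\imath=\pi K=0$: this is legitimate over a field of characteristic zero, and the subsequent correction of the linear part back to $\imath$ can be done by homotopy invariance of $L_\infty$-morphisms, but note that in the application the paper has in mind (Lemma~\ref{lemma i e pi quasi iso trasfer}) one already has $\pi\circ\imath=\operatorname{id}$ and an explicit $K$ on the nose, so that step is vacuous there.
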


\begin{proof}
See \cite[Theorem~4.1]{bib fiorenz-Manet}, \cite{bib fukaya} or
\cite{bib kontsevich}.
\end{proof}

Next, we use this theorem to transfer the $L_\infty$-structures of
$\hil$, given by  Example \ref{exe stru L infin su hil}, to the
cone $\cil ^\cdot$. We recall that  $ \cil^i=L^i \oplus N^i \oplus
M^{i-1}$ and $ D(l,n,m)=(d l, d n,-d m- g(n)+h(l)).$

Denote by $\langle \ ~\rangle_1\in
\Hom^1(\cil^\cdot[1],\cil^\cdot[1])$ the suspended differential,
i.e.,
$$
\langle   (l,n,m)\rangle_1=(-d l,-d n,d m +g(n)-h(l)).
$$

First of all, we note that  we can define an integral operator
$\int_a^b$ on   $M[t,dt]$, that is a linear map of degree -1 and
extends the usual integration $\int_a^b :\C[t,dt] \lrg \C$, i.e.,
$$
\int_a^b :M[t,dt]\lrg M \qquad \int_a^b (\sum_i t^im_i+t^idt\cdot
n_i)= \sum_i\left(\int_a^bt^idt\right)n_i.
$$

Next, define the following  linear  maps of degree 0:
$$
\imath: \cil^\cdot[1] \lrg \hil[1]
$$
$$
\imath(l,n,m)=(l,n,(1-t)g(n)+th(l)+dtm)\footnote{It is well
defined, since $e_0(l,n,(1-t)g(n)+th(l)+dtm)=g(n)$ and
$e_1(l,n,(1-t)g(n)+th(l)+dtm)=h(l)$.}
$$
and
$$
\pi: \hil[1]  \lrg \cil^\cdot[1]
$$
$$
\pi(l,n,m(t,dt))=(l,n,\int_0^1m(t,dt)).
$$
Finally, define the homotopy $K \in \Hom^{-1}(\hil[1],\hil[1])$ in
the following way
$$
K:\hil[1] \lrg \hil[1]
$$
$$
K(l,n,m(t,dt))=(0,0,t\int_0^1m(t,dt) -\int_0^t m(t,dt)).
$$

\begin{lem}\label{lemma i e pi quasi iso trasfer}
$\imath$ and $\pi$ are quasi-isomorphisms of complexes such that
$$
\pi \circ  \imath=id_{\cil^\cdot[1]} \quad \mbox{and}\quad
id_{\hil[1]}-\imath \circ \pi=K\circ q_1+q_1\circ K
$$
\end{lem}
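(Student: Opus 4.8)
The statement to prove is Lemma~\ref{lemma i e pi quasi iso trasfer}: that $\imath$ and $\pi$ are quasi-isomorphisms of complexes with $\pi\circ\imath=\operatorname{id}_{\cil^\cdot[1]}$ and $\operatorname{id}_{\hil[1]}-\imath\circ\pi=K\circ q_1+q_1\circ K$. The plan is to verify each of the stated identities by direct computation, using the explicit formulas for $\imath$, $\pi$, $K$ and the suspended differential $q_1$ on $\hil[1]$, and then to deduce the quasi-isomorphism claims formally. First I would check that $\imath$ and $\pi$ are morphisms of complexes, i.e.\ they commute with the differentials; on $\cil^\cdot[1]$ the differential is $\langle\,\cdot\,\rangle_1$ with $\langle(l,n,m)\rangle_1=(-dl,-dn,dm+g(n)-h(l))$, and on $\hil[1]$ it is $q_1(l,n,m(t,dt))=(-dl,-dn,-dm(t,dt))$ from Example~\ref{exe stru L infin su hil}. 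For $\pi$ this amounts to the identity $\int_0^1 dm(t,dt) = $ (the $M$-component obtained after applying $q_1$), which reduces to the elementary fact that $\int_0^1\frac{\partial}{\partial t}\varphi(t)\,dt = \varphi(1)-\varphi(0)$, matching $h(l)-g(n)$ via the boundary conditions $e_0(\imath(\cdot))=g(n)$, $e_1(\imath(\cdot))=h(l)$ that define membership in $\hil$. For $\imath$ one checks that $\debar$ applied to $(1-t)g(n)+th(l)+dt\,m$ produces exactly the $M[t,dt]$-component of $\imath(\langle(l,n,m)\rangle_1)$, again a short manipulation using $d\circ g=g\circ d$ and $d\circ h=h\circ d$.

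Next I would compute $\pi\circ\imath$: applying $\pi$ to $\imath(l,n,m)=(l,n,(1-t)g(n)+th(l)+dt\,m)$ gives $(l,n,\int_0^1[(1-t)g(n)+th(l)+dt\,m])$, and since $(1-t)g(n)+th(l)$ carries no $dt$ it integrates to $0$ under the integral operator $\int_0^1$ (which, as defined, only picks up the $dt$-part), leaving $\int_0^1 dt\cdot m = m$. Hence $\pi\circ\imath=\operatorname{id}_{\cil^\cdot[1]}$. The homotopy identity $\operatorname{id}_{\hil[1]}-\imath\circ\pi=K\circ q_1+q_1\circ K$ is the one genuine computation: on a general element $(l,n,m(t,dt))\in\hil[1]$, the left side has $L$- and $N$-components zero and $M[t,dt]$-component $m(t,dt)-\big[(1-t)g(n)+th(l)+dt\int_0^1 m(t,dt)\big]$; one then computes $q_1K+Kq_1$ on the same element and matches, using the classical homotopy formula for polynomial differential forms, namely that $t\mapsto\int_0^t$ furnishes a contracting homotopy on $\C[t,dt]$ with the constant-and-endpoint terms appearing as the boundary correction. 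The boundary conditions $g(n)=e_0(m(t,dt))$ and $h(l)=e_1(m(t,dt))$ defining $\hil$ are exactly what makes the endpoint terms $(1-t)g(n)+th(l)$ agree with the corresponding evaluations of $m$, so the two sides coincide.

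Finally, the quasi-isomorphism statements follow formally: from $\pi\circ\imath=\operatorname{id}$ and $\operatorname{id}-\imath\circ\pi=K q_1+q_1 K$ we see that $\imath$ and $\pi$ are homotopy inverses of each other as maps of complexes, hence each induces mutually inverse isomorphisms on cohomology, so both are quasi-isomorphisms. This also puts us precisely in the hypotheses of Theorem~\ref{teo trasfer strutt L infiniti}, which is the whole point of the lemma. The main obstacle, such as it is, is purely bookkeeping in the homotopy identity: one must be careful with the sign conventions built into $q_1$ (the suspended differential introduces a sign, cf.\ Remark~\ref{oss DGLA is L infinito}) and with the degree $-1$ nature of both $\int_0^1$ and $K$, so that the Koszul signs in $q_1\circ K$ versus $K\circ q_1$ are tracked correctly; everything else is elementary calculus of polynomial differential forms on the affine line.
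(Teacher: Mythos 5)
Your proposal is correct and matches the paper's approach: the paper's proof is exactly the direct computation you sketch (it simply defers the bookkeeping to Fiorenza--Manetti, Lemma~3.2, calling it straightforward), namely checking $\pi\circ\imath=\operatorname{id}$ via the fact that $\int_0^1$ only sees the $dt$-part, verifying the homotopy identity using the boundary conditions $e_0(m)=g(n)$, $e_1(m)=h(l)$ defining $\hil$, and deducing the quasi-isomorphism claims formally. The only nitpick is notational: the differential on $M$ is the abstract $d$, not $\debar$, but this does not affect the argument.
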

\begin{proof}
See \cite[Lemma~3.2]{bib fiorenz-Manet}. It is a straightforward
computation.
\end{proof}

Thus, applying Theorem~\ref{teo trasfer strutt L infiniti}, we get
an $L_\infty$-algebra structure $\cil^\cdot$.

\begin{cor}\label{cor esist strutt L infin su Cil da transf Hil}
There exists an $L_\infty$-algebra structure $(\cil^\cdot, \langle
\ \rangle_1,\langle \ \rangle_2, \ldots )$ on the complex $\cil^\cdot$,
that extends its differential  structure, and it makes $
(\hil,q_1,q_2,0, \ldots)$ and $(\cil^\cdot,\langle  \ \rangle_1,
\langle \ \rangle_2, \ldots )$   quasi-isomorphic
$L_\infty$-algebras, via an $L_\infty$-quasi-isomorphism
$\imath_\infty$ extending $\imath$.
\end{cor}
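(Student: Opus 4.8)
The plan is to apply Theorem~\ref{teo trasfer strutt L infiniti} directly, with the $L_\infty$-algebra $(V,q_1,q_2,q_3,\ldots)$ taken to be $\hil$ equipped with the $L_\infty$ structure of Example~\ref{exe stru L infin su hil} (so $q_3=q_4=\cdots=0$), and with the differential graded vector space $(C,\delta)$ taken to be the cone $(\cil^\cdot,D)$. Everything needed to invoke the transfer theorem has already been set up: Lemma~\ref{lemma i e pi quasi iso trasfer} provides the two morphisms of complexes $\pi:(\hil[1],q_1)\lrg(\cil^\cdot[1],\langle\ \rangle_1)$ and $\imath:(\cil^\cdot[1],\langle\ \rangle_1)\lrg(\hil[1],q_1)$, and exhibits them as homotopy inverses via the explicit homotopy $K$, since $\pi\circ\imath=\mathrm{id}$ and $\mathrm{id}-\imath\circ\pi=K\circ q_1+q_1\circ K$. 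Thus the hypotheses of Theorem~\ref{teo trasfer strutt L infiniti} are satisfied verbatim.

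Concretely, I would first recall that by Remark~\ref{oss DGLA is L infinito} the DGLA $\hil$ carries an $L_\infty$ structure with $q_1$ the suspension of its differential, $q_2$ built from its bracket, and $q_k=0$ for $k\ge 3$; this is recorded explicitly at the end of Example~\ref{exe stru L infin su hil}. Next I would point out that Lemma~\ref{lemma i e pi quasi iso trasfer} gives precisely a homotopy-equivalence datum between the complex $(\hil[1],q_1)$ and the complex $(\cil^\cdot[1],\langle\ \rangle_1)$ — indeed $\langle\ \rangle_1$ is by construction the suspension $D_{[1]}$ of the cone differential $D$. Then the conclusion of Theorem~\ref{teo trasfer strutt L infiniti} yields an $L_\infty$-algebra structure $(\cil^\cdot,\langle\ \rangle_1,\langle\ \rangle_2,\ldots)$ on the cone extending its differential complex structure, together with an $L_\infty$-quasi-isomorphism $\imath_\infty$ extending $\imath$ and making $\hil$ and $\cil^\cdot$ quasi-isomorphic $L_\infty$-algebras. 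This is exactly the assertion of the Corollary, so no further argument is required.

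The only mild subtlety — and the one step I would be careful about — is bookkeeping of the shift conventions: one must check that the maps $\imath$, $\pi$, $K$ as defined on $\cil^\cdot[1]$ and $\hil[1]$ really do land in the right shifted degrees and that the differential $\langle\ \rangle_1$ written down just before Lemma~\ref{lemma i e pi quasi iso trasfer} agrees with the suspension of $D$. But this is routine and is precisely the content of Lemma~\ref{lemma i e pi quasi iso trasfer}, which we are entitled to assume. Hence the proof is essentially a one-line invocation: \emph{apply Theorem~\ref{teo trasfer strutt L infiniti} to the $L_\infty$-algebra $\hil$ of Example~\ref{exe stru L infin su hil} and the homotopy equivalence data of Lemma~\ref{lemma i e pi quasi iso trasfer}.} I would therefore write the proof as: ``It follows at once from Theorem~\ref{teo trasfer strutt L infiniti}, applied to the $L_\infty$-algebra $(\hil,q_1,q_2,0,\ldots)$ of Example~\ref{exe stru L infin su hil} and to the complex $(\cil^\cdot,D)$, using the quasi-isomorphisms $\imath$, $\pi$ and the homotopy $K$ of Lemma~\ref{lemma i e pi quasi iso trasfer}.''
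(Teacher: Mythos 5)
Your proposal is correct and is exactly the paper's argument: the corollary is obtained by invoking Theorem~\ref{teo trasfer strutt L infiniti} with $V=\hil$ carrying the DGLA-induced $L_\infty$ structure of Example~\ref{exe stru L infin su hil}, $C=\cil^\cdot$, and the homotopy-equivalence data $\imath$, $\pi$, $K$ supplied by Lemma~\ref{lemma i e pi quasi iso trasfer}. No further review is needed.
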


\begin{oss}

As explained in \cite{bib fiorenza}, \cite{bib fiorenz-Manet},
\cite{bib fukaya} and \cite{bib kontsevich}, we have an explicit
description of the higher multiplication $\langle \ \rangle_n$ on
$\cil^\cdot$ in terms of rooted trees. Since
$$
q_2(\image K \otimes \image K)\subseteq \ker \pi \cap \ker K \quad
\mbox{and} \quad q_k=0,\ \forall \, k \geq 3,
$$
it can be proved that we have to consider  just the following
rooted trees
$$
\begin{xy}
,(-32,24)*{\circ};(-24,18)*{\bullet}**\dir{-}?>*\dir{>}
,(-32,12)*{\circ};(-24,18)**\dir{-}?>*\dir{>}
,(-24,6)*{\circ};(-16,12)*{\bullet}**\dir{-}?>*\dir{>}
,(-24,18)*{\circ};(-16,12)**\dir{-}?>*\dir{>}
,(-8,6)*{\circ};(0,0)*{\bullet}**\dir{-}?>*\dir{>}
,(-8,-6)*{\circ};(0,0)*{\bullet}**\dir{-}?>*\dir{>}
,(-16,12);(-8,6)*{\bullet}**\dir{.}?>*\dir{>}
,(-16,0)*{\circ};(-8,6)*{\bullet}**\dir{-}?>*\dir{>}
,(0,0)*{\bullet};(8,0)*{\circ}**\dir{-}?>*\dir{>}
,(15,0)*{{\text{root}}}
\end{xy}
$$
decorated with the operators $K,q_2,\imath$ and $\pi$ in the
following way
\begin{equation}\label{diagram albero decorato}
\begin{xy}
,(-32,24);(-24,18)*{\bullet}**\dir{-}?>*\dir{>}
,(-32,12);(-24,18)*{\bullet}**\dir{-}?>*\dir{>}
,(-24,6);(-16,12)*{\bullet}**\dir{-}?>*\dir{>}
,(-24,18)*{\bullet};(-16,12)*{\bullet}**\dir{.}?>*\dir{>}
,(-8,6);(0,0)*{\bullet}**\dir{-}?>*\dir{>}
,(-8,-6);(0,0)*{\bullet}**\dir{-}?>*\dir{>}
,(-16,12)*{\bullet};(-8,6)*{\bullet}**\dir{-}?>*\dir{>}
,(-16,0);(-8,6)*{\bullet}**\dir{-}?>*\dir{>}
,(0,0)*{\bullet};(8,0)**\dir{-}?>*\dir{>}
,(-7,8)*{\scriptstyle{q_2}} ,(-15,14)*{\scriptstyle{q_2}}
,(-23,20)*{\scriptstyle{q_2}} ,(-3,5)*{\scriptstyle{K}}
,(-11,11)*{\scriptstyle{K}} ,(-5,-2.4)*{\scriptstyle{\iota}}
,(-13,3.6)*{\scriptstyle{\iota}} ,(-21,9.6)*{\scriptstyle{\iota}}
,(-29,15.6)*{\scriptstyle{\iota}}
,(-29,23.3)*{\scriptstyle{\iota}} ,(4.5,1.5)*{\scriptstyle{\pi}}
,(1,2)*{\scriptstyle{q_2}},(11,0)*{.}
\end{xy}
\end{equation}

Explicitly, for  each  $n\geq 2$, these diagrams give us the
following formula for the higher multiplications $ \langle\
\rangle_n $:
\begin{multline*}
\langle\gamma_1\odot\cdots\odot
\gamma_n\rangle_n=\\
=\frac{(-1)^{n-2}}{2}\sum_{\sigma\in
\Sigma_n}\varepsilon(\sigma)\pi{q}_2(\imath(\gamma_{\sigma(1)})\odot
Kq_2(\imath(\gamma_{\sigma(2)})\odot\cdots\odot
Kq_2(\imath(\gamma_{\sigma(n-1)})\odot\imath
(\gamma_{\sigma(n)}))\cdots)).
\end{multline*}
The factor 1/2 in the above formula accounts for the cardinality
of the automorphisms group of the graph involved.

\end{oss}
In particular,   for each $ (l_1,n_1,m_1 )$ and $(l_2,n_2,m_2 )$
in $\cil^\cdot$ we have
$$
\langle(l_1,n_1,m_1 )\odot(l_2,n_2,m_2 ) \rangle_2=
$$
$$
\pi q_2(\imath( l_1,n_1,m_1  )\odot \imath(l_2,n_2,m_2 ))=
$$
$$
(-1)^{deg_{\cil ^\cdot  }(l_1,n_1,m_1 )}([l_1,l_2],[n_1,n_2],
$$
$$
([g(n_1),m_2] +[m_1,g(n_2)]) \int_0^1 \!\! (1-t)dt +([h(l_1),m_2]
+[m_1,h(l_2)])\int_0^1 \!\! tdt)=
$$
$$
(-1)^{deg_{\cil^\cdot }(l_1,n_1,m_1 )}
$$
$$
([l_1,l_2], [n_1,n_2], \frac{1}{2}\left( [g(n_1),m_2]
+[m_1,g(n_2)] + [h(l_1),m_2] + [m_1,h(l_2)] \right)).
$$
If  $(l_1,n_1,m_1 )= (l_2,n_2,m_2 ) =(l,n,m )\in \cil^0[1] $, then
$$
\langle  (l ,n ,m  )^{\odot 2} \rangle_2=(-[l,l],-[n,n],
-[m,g(n)]-[m,h(l)]\,).
$$

\begin{oss}
All higher multiplications (for $n\geq3$) vanish except the
following ones:
\begin{equation}\label{equa prodotti non nulli m^j l e m^j n}
\langle m_1\odot\cdots\odot m_{j}\odot l\rangle_{j+1} \qquad
\mbox{and } \qquad \langle m_1\odot\cdots\odot m_{j}\odot
n\rangle_{j+1}
\end{equation}
or their linear combinations (for each $j \geq 2$). Here we use
the notation $\gamma_i=m_i$ instead of $\gamma_i=(0,0,m_i)$ and
analogously for $\gamma_i=l$ or $\gamma_i=n$.
\end{oss}

As in \cite[Section~5]{bib fiorenz-Manet}, we can use Bernoulli's
numbers to give an explicit description of the multiplications of
(\ref{equa prodotti non nulli m^j l e m^j n}). First of all, we
recall that the  Bernoulli's numbers $B_n$ are defined by
$$
\sum_{n=0}^{\infty}B_n\frac{x^n}{n!}=\frac{x}{e^x-1}=
=1-\frac{x}{2}+\frac{x^2}{12}-\frac{x^4}{720}+\frac{x^6}{30240}-
\frac{x^8}{1209600}+\frac{x^{10}}{47900160}+\cdots.
$$
Next, consider the multiplication
$$
q_2( \imath(m) \odot \imath(l))=q_2((0,0,dt m)\odot (l,0,th(l))) 
$$
$$
= (-1)^{deg_{\hil}(0,0,dtm)}(0,0,tdt[m,h(l)]).
$$
Define recursively $\phi_j(t) \in \mathbb{Q}[t]$ and $I_j \in
\mathbb{Q}$ as
$$
 \phi_1(t)=t,\qquad I_j=\int_0^1\phi_j(t)dt,\qquad
\phi_{j+1}(t)= \int_0^t\phi_{j}(s)ds -tI_j.
$$
Then,
$$
K((\phi_j(t)dt)m)=-\phi_{j+1}(t)m
$$
and so
$$
Kq_2((dt m_1)\odot \phi_j(t)m_2)=
$$
$$
-(-1)^{deg_M m_1 +1}\phi_{j+1}(t)[m_1,m_2]=(-1)^{deg_M
m_1}\phi_{j+1}(t)[m_1,m_2].
$$

\begin{lem}\label{lemma prodotti cono <m m m m l>}
The following formula holds
$$
\langle m_1\odot \cdots \odot m_j\odot l \rangle_{j+1}=
$$
$$
(-1)^{n+\sum_{i=1}^j  deg_M(m_i)}I_j \sum_{\sigma\in
\Sigma_j}\varepsilon(\sigma) [m_{\sigma(1)}, [m_{\sigma(2)},\cdots
[m_{\sigma(j)},h(l)]\cdots]]
$$
$$
=-(-1)^{\sum_{i=1}^j   deg_M(m_i)}\frac{B_j}{j!}\sum_{\sigma\in
\Sigma_j}\varepsilon(\sigma) [m_{\sigma(1)}, [m_{\sigma(2)},\cdots
[m_{\sigma(j)},h(l)]\cdots]].
$$

\end{lem}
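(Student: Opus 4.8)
The plan is to prove the identity in Lemma~\ref{lemma prodotti cono <m m m m l>} by unwinding the tree formula for $\langle\ \rangle_{j+1}$ specialized to the decorated tree that actually contributes, and then comparing the resulting rational constant with $B_j/j!$. First I would observe that, by the preceding remark, the only nonvanishing contributions to $\langle m_1\odot\cdots\odot m_j\odot l\rangle_{j+1}$ come from the linear ``corolla-with-a-tail'' trees appearing in diagram~(\ref{diagram albero decorato}): applying $\imath$ to each leaf, composing successively with $Kq_2$, and finishing with $\pi q_2$ at the root. Using $\imath(l,0,0)=(l,0,th(l))$ and $\imath(0,0,m_i)=(0,0,dt\,m_i)$, together with the computation already recorded before the statement, namely $Kq_2((dt\,m_1)\odot\phi_j(t)m_2)=(-1)^{\deg_M m_1}\phi_{j+1}(t)[m_1,m_2]$, I would show by a straightforward induction on the height of the tree that an iterated application of $Kq_2$ starting from $\imath(l)$ and attaching the leaves $\imath(m_j),\imath(m_{j-1}),\dots$ produces the element $(0,0,\phi_{j}(t)[m_{\sigma(1)},[m_{\sigma(2)},\cdots[m_{\sigma(j)},h(l)]\cdots]])$, up to the Koszul sign $\varepsilon(\sigma)$ and an overall sign coming from the $\deg_M(m_i)$ and the shift. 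Applying $\pi$ at the root replaces $\phi_j(t)$ by $\int_0^1\phi_j(t)\,dt=I_j$, and summing over $\sigma\in\Sigma_j$ (with the global factor $\tfrac12(-1)^{j-2}$ from the tree formula, absorbed into the sign bookkeeping) yields the first displayed equality.

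The second equality is then a purely combinatorial statement about the sequence $I_j$ defined recursively by $\phi_1(t)=t$, $I_j=\int_0^1\phi_j(t)\,dt$, $\phi_{j+1}(t)=\int_0^t\phi_j(s)\,ds-tI_j$. The claim is that $I_j=-B_j/j!$ for $j\ge1$. I would prove this by producing a generating function identity: set $\Phi(t,x)=\sum_{j\ge1}\phi_j(t)x^{j-1}$ (or a suitably normalized variant). The recursion for $\phi_{j+1}$ translates into $\Phi(t,x)=t+x\int_0^t\Phi(s,x)\,ds - x\,t\,(\sum_j I_j x^{j-1})$, i.e.\ $\partial_t\Phi = x\,\Phi$ away from the correction term, whence $\Phi(t,x)$ is essentially $e^{xt}$ times a function of $x$ determined by the normalization $\phi_1=t$ and the integral constraints. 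Evaluating at $t=1$ and matching with $x/(e^x-1)=\sum_n B_n x^n/n!$ gives $I_j=-B_j/j!$; the factor $-1$ and the shift in index are exactly accounted for by the $-\tfrac x2$ term in the Bernoulli expansion being the $j=1$ value $I_1=\int_0^1 t\,dt=\tfrac12$, consistent with $-B_1/1!=\tfrac12$. This is the same computation as in \cite[Section~5]{bib fiorenz-Manet}, so I would either cite it or reproduce the one-line generating-function argument.

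The main obstacle I anticipate is not the generating function but the sign bookkeeping in the first part: one must track the Koszul signs $\varepsilon(\sigma)$, the degree shifts $[1]$ built into $q_1,q_2,K,\imath,\pi$, and the extra $(-1)^{\deg_{\hil}}$ appearing in $q_2$ on $\hil$, and verify that they collapse to the stated global sign $(-1)^{n+\sum \deg_M(m_i)}$ (with $n=j+1$). I would handle this by fixing once and for all the convention that elements of $\cil^\cdot[1]$ carry degree $\deg_{\cil^\cdot}-1$, computing the sign contribution of a single $Kq_2$ step in isolation, and then showing the signs multiply cleanly along the tail of the tree because each intermediate element $\phi_k(t)m\in M[t,dt]$ has a controlled degree. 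Once the single-step sign is pinned down, the induction is mechanical, so I would state it as a ``straightforward computation'' after isolating that one step. No serious difficulty is expected beyond this careful accounting.
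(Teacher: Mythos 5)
Your proposal is correct and takes essentially the same route as the paper, which for this lemma simply defers to \cite[Theorem~5.5]{bib fiorenz-Manet}: the reduction to the linear decorated trees, the iterated $Kq_2$ step producing $\phi_j(t)$ times the nested brackets, the appearance of $I_j$ after applying $\pi$, and the generating-function identity $\sum_{j\ge1}I_jx^j=1-\tfrac{x}{e^x-1}$ (hence $I_j=-B_j/j!$) are exactly the content of the cited computation. The only piece you leave unexecuted is the global sign bookkeeping, which you correctly isolate as the single delicate step and for which your single-step-then-induct strategy is the right one.
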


\begin{proof}
See \cite[Theorem~5.5]{bib fiorenz-Manet}.
\end{proof}

In particular, if $m_{\sigma(i)}=m \in M^0 $, for each $i$, then
$$
\langle m^{\odot j} \odot l \rangle_{j+1}
=-\frac{B_j}{j!}\sum_{\sigma\in \Sigma _j}  [m , [m ,\cdots [m
,h(l )]\cdots]]=-B_j\,\ad^j_m(h(l)).
$$

\bigskip

Next, consider the multiplication
$$
q_2( \imath(m) \odot g(n))=q_2((0,0,dt m)\odot (0,g(n),(1-t)g(n))=
$$
$$
(-1)^{deg_{\hil}(0,0,dtm)}(0,0,(1-t)dt[m,g(n)]).
$$
In this case, define recursively $\varphi_j(t) \in \mathbb{Q}[t]$
as follows
$$
 \varphi_1(t)=1-t=1- \phi_1(t)
$$
and
$$
\varphi_{j+1}(t)=\int_0^t\varphi_{j}(s)ds
-t\int_0^1\varphi_{j}(s)ds.
$$

We note that
$$
\varphi_2(t)=\int_0^t  \varphi_1(s)ds - t \int_0^1 \varphi_1(s)ds=
$$
$$
=\int_0^t(1- \phi_1(s))ds -t\int_0^1(1- \phi_1(s))ds=
$$
$$
( \int_0^t ds-t\int_0^1ds)- ( \int_0^t\phi_1(s)ds
-t\int_0^1\phi_1(s)ds)= -\phi_2(t).
$$
Therefore, for $j \geq 2$ we get
$$
\varphi_j(t)=-\phi_j(t).
$$
Then,
$$
K((\varphi_j(t)dt)m)=-\varphi_{j+1}(t)m
$$
and so
$$
Kq_2((dt m_1)\odot \varphi_j(t)m_2)=
$$
$$
-(-1)^{deg_M m_1 +1}\varphi_{j+1}(t)[m_1,m_2]=(-1)^{deg_M
m_1}\varphi_{j+1}(t)[m_1,m_2].
$$

\begin{lem}\label{lemma prodotti cono <m m m m n>}
The following formula holds
$$
\langle m_1\odot \cdots \odot m_j\odot n \rangle_{j+1}=
$$
$$
-(-1)^{n+\sum_{i=1}^j deg_M(m_i)}I_j \sum_{\sigma\in
S_j}\varepsilon(\sigma) [m_{\sigma(1)}, [m_{\sigma(2)},\cdots
[m_{\sigma(j)},g(n)]\cdots]]
$$
$$
= (-1)^{\sum_{i=1}^j  deg_M(m_i)}\frac{B_j}{j!}\sum_{\sigma\in
S_j}\varepsilon(\sigma) [m_{\sigma(1)}, [m_{\sigma(2)},\cdots
[m_{\sigma(j)},g(n)]\cdots]].
$$
\end{lem}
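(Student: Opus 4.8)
The plan is to mirror exactly the computation that proves Lemma~\ref{lemma prodotti cono <m m m m l>}, replacing the chain of operators attached to the leaf $l$ by the chain attached to the leaf $n$, and then to account for the single sign discrepancy that arises from the two different decorated edges $\imath(l)=(l,0,th(l))$ versus $\imath(n)=(0,n,(1-t)g(n))$. First I would recall that, by the tree formula for the higher multiplications in Corollary~\ref{cor esist strutt L infin su Cil da transf Hil} and the Remark following it, the only surviving contributions to $\langle m_1\odot\cdots\odot m_j\odot n\rangle_{j+1}$ come from the linear caterpillar tree decorated as in diagram~(\ref{diagram albero decorato}), so that
$$
\langle m_1\odot\cdots\odot m_j\odot n\rangle_{j+1}
=\frac{(-1)^{j-1}}{2}\sum_{\sigma\in\Sigma_j}\varepsilon(\sigma)\,
\pi q_2\bigl(\imath(m_{\sigma(1)})\odot Kq_2(\imath(m_{\sigma(2)})\odot\cdots\odot Kq_2(\imath(m_{\sigma(j)})\odot\imath(n))\cdots)\bigr).
$$
(The leaf $n$ here stands for $(0,n,0)$ and each $m_i$ for $(0,0,m_i)$, as in the statement.)

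Next I would run the innermost step: $q_2(\imath(m_{\sigma(j)})\odot\imath(n))=q_2((0,0,dt\,m_{\sigma(j)})\odot(0,n,(1-t)g(n)))$, which by the formula already displayed in the text equals $(-1)^{\deg m_{\sigma(j)}}(0,0,(1-t)dt\,[m_{\sigma(j)},g(n)])$, i.e.\ $(-1)^{\deg m_{\sigma(j)}}\varphi_1(t)\,dt\,[m_{\sigma(j)},g(n)]$ in the $M[t,dt]$-slot. Applying $K$ and then iterating $q_2$ and $K$ against the remaining $m_i$'s — using the two identities $K((\varphi_j(t)dt)m)=-\varphi_{j+1}(t)m$ and $Kq_2((dt\,m_1)\odot\varphi_j(t)m_2)=(-1)^{\deg m_1}\varphi_{j+1}(t)[m_1,m_2]$ recorded just above the statement — produces, after all $j$ leaves $m_i$ are consumed, a polynomial $\varphi_j(t)$ times the iterated bracket $[m_{\sigma(1)},[m_{\sigma(2)},\cdots[m_{\sigma(j)},g(n)]\cdots]]$, with an accumulated sign $(-1)^{\sum_i\deg m_i}$. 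The final step $\pi q_2(\imath(m_{\sigma(1)})\odot\,\cdot\,)$ applies $\int_0^1$, converting $\varphi_j(t)$ into the constant $\int_0^1\varphi_j(t)\,dt$. Since for $j\ge 2$ we have shown $\varphi_j(t)=-\phi_j(t)$, this constant is $-\int_0^1\phi_j(t)\,dt=-I_j$, which is the source of the overall minus sign distinguishing this lemma from Lemma~\ref{lemma prodotti cono <m m m m l>}. Collecting the combinatorial prefactor $(-1)^{j-1}/2$ with the $|\Sigma_j|$-to-$|S(1,\dots)|$ symmetrization bookkeeping — identical to the bookkeeping in the proof of Lemma~\ref{lemma prodotti cono <m m m m l>}, which is why one may simply cite \cite[Theorem~5.5]{bib fiorenz-Manet} for it — and then using the classical identity $I_j=-B_j/j!$ for $j\ge1$ (equivalently $\int_0^1\phi_j(t)dt=B_j/j!$, already invoked in the previous lemma), gives both displayed expressions in the statement.

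The only genuinely new point, and the one I expect to need the most care, is the sign arising from the asymmetry between the two leaves: the edge $\imath(n)$ carries $(1-t)g(n)$ whereas $\imath(l)$ carries $th(l)$, so the recursion is seeded by $\varphi_1(t)=1-t$ rather than $\phi_1(t)=t$. Verifying the base case $\varphi_2(t)=-\phi_2(t)$ — done explicitly in the text just before the statement — and then the induction step $\varphi_{j+1}(t)=-\phi_{j+1}(t)$ for $j\ge2$ (immediate from linearity of the recursion once one notes $\int_0^t 1\,ds - t\int_0^1 1\,ds=0$) is precisely what propagates this single sign flip through the whole caterpillar without affecting anything else. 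Everything else — the Koszul signs $\varepsilon(\sigma)$, the degree shifts in $\hil[1]$ and $\cil^\cdot[1]$, and the $1/2$ automorphism factor — is literally the same as in Lemma~\ref{lemma prodotti cono <m m m m l>}, so the proof reduces to: (i) rerun that computation with $g(n)$ in place of $h(l)$, (ii) replace $\phi_j$ by $\varphi_j$, (iii) invoke $\varphi_j=-\phi_j$ for $j\ge2$ to pick up the global minus sign, and (iv) cite \cite[Theorem~5.5]{bib fiorenz-Manet} for the remaining identical bookkeeping.
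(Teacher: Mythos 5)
Your proposal is correct and follows essentially the same route as the paper, whose proof is literally ``analogous to Lemma~\ref{lemma prodotti cono <m m m m l>}'' (itself a citation of \cite[Theorem~5.5]{bib fiorenz-Manet}): you spell out that analogy using exactly the ingredients the paper prepares just before the statement, namely the seed $q_2(\imath(m)\odot\imath(n))=(-1)^{\deg m}(0,0,(1-t)dt[m,g(n)])$, the recursion for $\varphi_j$, and the identity $\varphi_j=-\phi_j$ for $j\ge 2$ which produces the extra global sign. One tiny slip: your parenthetical ``equivalently $\int_0^1\phi_j(t)dt=B_j/j!$'' contradicts the correct identity $I_j=\int_0^1\phi_j(t)dt=-B_j/j!$ that you state in the same sentence, but this does not affect the argument.
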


\begin{proof}
Analogous of Lemma~\ref{lemma prodotti cono <m m m m l>}.
\end{proof}

In particular, if $m_{\sigma(i)}=m \in M^0 $ for each $\imath$,
then
$$
\langle m^{\odot j} \odot n \rangle_{j+1}=
\frac{B_j}{j!}\sum_{\sigma\in S_j}  [m , [m ,\cdots [m
,g(n)]\cdots]]=B_j\,\ad^j_m(g(n)).
$$

\subsection{The Maurer-Cartan functor on the cone}

Let $T$ be an $L_\infty$-algebra. Then we can define (see
\cite{bib fukaya},\cite{bib kontsevich}) the \emph{Maurer-Cartan
functor $\MC_T^\infty$ associated with $T$} in the following way
$$
\MC_T^\infty : \Art \lrg \Set
$$
$$
\MC_T^\infty(A)=\left\{ \gamma\in T^1\otimes m_A
\;\strut\left\vert\; \sum_{j\ge1}\frac{\langle\gamma^{\odot
j}\rangle_j}{j!}=0\right.\right\}.
$$

Next, we want to prove that the Maurer-Cartan functor $\mc$
associated with the pair of morphisms of DGLAs $h:L \lrg M$ and
$g:N \lrg M$ (introduced in Section~\ref{sezi def MC_(h,g) and
DEF(h,g) no ext}) is exactly the Maurer-Cartan functor associated
with the $L_\infty$ structure on $\cil^\cdot$ defined before.

By definition,
$$
\MC_{\cil}^\infty:\Art \lrg \Set
$$
$$
\MC_{\cil}^\infty(A)=\left\{ \gamma\in \cil[1]^0\otimes m_A
\;\strut\left\vert\; \sum_{j\geq 1}\frac{\langle\gamma^{\odot
j}\rangle_j}{j!}=0\right.\right\}.
$$
Let $\gamma=(l,n,m) \in \cil[1]^0\otimes m_A$, thus $l \in L^1
\otimes m_A, \ n \in N^1 \otimes m_A$ and $ m \in M^0 \otimes
m_A$.
Then,
$$
\langle   (l,n,m )\rangle_1=(-dl,-dn,dm+g(n)-h(l))
$$
and
$$
\langle   (l,n,m )^{\odot 2}\rangle_2=(-[l,l],-[n,n],
-[m,g(n)]-[m,h(l)]).
$$

Therefore, the Maurer-Cartan equation
$$
\sum_{j\geq 1}\frac{\langle(l,n,m)^{\odot j}\rangle_j}{j!}=0
$$
splits into
$$
dl+\frac{1}{2}[l,l]=dn +\frac{1}{2}[n,n]=0
$$
and
\begin{equation}\label{equa second MC l infi cono}
g(n)-h(l)+dm -\frac{1}{2}[m,g(n)]- \frac{1}{2} [m,h(l)]
+\sum_{j\geq 3}\frac{\langle(l,n,m)^{\odot j}\rangle_j}{j!}=0.
\end{equation}
Since
$$
\sum_{j\geq 3}\frac{\langle(l,n,m)^{\odot j}\rangle_j}{j!}=
\sum_{j\geq 2} \frac{(j+1)}{(j+1)!}\langle m^{\odot j} \odot l
\rangle_{j+1} + \sum_{j\geq 2} \frac{(j+1)}{(j+1)!}\langle
m^{\odot j} \odot n \rangle_{j+1},
$$
applying  Lemma~\ref{lemma prodotti cono <m m m m l>} and
Lemma~\ref{lemma prodotti cono <m m m m n>}, Equation~(\ref{equa
second MC l infi cono}) becomes
$$
g(n)-h(l)+dm -\frac{1}{2}[m,g(n)]- \frac{1}{2} [m,h(l)]
$$
$$
+\sum_{j\geq 2} \frac{B_j}{j!} {\operatorname{ad}_m}^j(g(n))
-\sum_{j\geq 2} \frac{B_j}{j!} {\operatorname{ad}_m}^j(h(l))=0.
$$

Since $B_0=1$ and $B_1=-\frac{1}{2}$, we can write
$$
0=g(n)-h(l)+dm-[m,h(l)]+\sum_{j\geq 1} \frac{B_j}{j!}
{\operatorname{ad}_m}^j(g(n)) -\sum_{j\geq 1} \frac{B_j}{j!}
{\operatorname{ad}_m}^j(h(l))
$$
and so
$$
0=dm-[m,h(l))]+\sum_{j\geq 0} \frac{B_j}{j!}
{\operatorname{ad}_m}^j(g(n)) -\sum_{j\geq 0}
 \frac{B_j}{j!}  {\operatorname{ad}_m}^j(h(l))=
$$
$$
dm-[m,h(l))]+\sum_{j\geq 0} \frac{B_j}{j!}
{\operatorname{ad}_m}^j(g(n)  - h(l)).
$$

This implies that
$$
0=dm -[m,h(l)]+\dfrac{\ad_m}{e^{\ad_m}-id}(g(n)-h(l)).
$$
Applying the operator $\dfrac{e^{\ad_m}-id}{\ad_m}$,  we get
$$
g(n)=h(l)+\dfrac{e^{\ad_m}-id}{\ad_m}([m,h(l)]-dm)=e^m*h(l).
$$

In conclusion, the Maurer-Cartan equation for the $L_\infty$
structure on $\cil^\cdot$ is
$$
\begin{cases}
dl+\dfrac{1}{2}[l,l]=0\\
dn+\dfrac{1}{2}[n,n]=0\\
e^m * h(l)=g(n).
\end{cases}
$$
\begin{cor}\label{cor MC(h,g)=MC infini cil}
$\mc\cong \MC^\infty_{\cil^\cdot}$.
\end{cor}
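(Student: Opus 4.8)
The statement to prove is Corollary~\ref{cor MC(h,g)=MC infini cil}, namely $\mc \cong \MC^\infty_{\cil^\cdot}$. Looking at the context, the entire preceding subsection has essentially done all the computational work: starting from a general element $\gamma = (l,n,m) \in \cil[1]^0 \otimes m_A$, the Maurer-Cartan equation $\sum_{j\ge 1} \langle \gamma^{\odot j}\rangle_j / j! = 0$ has been expanded degree by degree, the higher brackets computed via Lemmas~\ref{lemma prodotti cono <m m m m l>} and \ref{lemma prodotti cono <m m m m n>}, the Bernoulli number generating function recognized, and the third component reduced to $e^m * h(l) = g(n)$ — exactly the defining relation of $\mc$. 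So the proof is short.

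My plan would be: First, observe that there is an obvious natural identification on the level of underlying sets/degree pieces, namely $\cil[1]^0 \otimes m_A = (L^1 \otimes m_A) \oplus (N^1 \otimes m_A) \oplus (M^0 \otimes m_A)$, matching the triples $(x,y,m)$ (with $e^p := e^m$) that appear in the definition of $\mc(A)$ in Section~\ref{sezi def MC_(h,g) and DEF(h,g) no ext}. Second, cite the computation just carried out: it shows precisely that $\gamma = (l,n,m)$ lies in $\MC^\infty_{\cil^\cdot}(A)$ if and only if $dl + \tfrac12[l,l] = 0$, $dn + \tfrac12[n,n]=0$, and $e^m * h(l) = g(n)$, which is by definition the condition for $(l,n,e^m) \in \mc(A)$. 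Third, note that this identification is manifestly functorial in $A$: for a morphism $A \lrg B$ in $\Art$ both functors act by the induced map $m_A \lrg m_B$ applied componentwise, and all the operations involved (differential, bracket, $\ad_m$, exponential series) are natural, so the bijection $\mc(A) \cong \MC^\infty_{\cil^\cdot}(A)$ commutes with these maps. Hence we get an isomorphism of functors.

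There is essentially no obstacle here: the only thing to be careful about is that the "underlying" bijection is literally the identity under the identification $\cil[1]^0 = L^1 \oplus N^1 \oplus M^0$ (with the third coordinate read through $m \mapsto e^m$), and that naturality is immediate because everything in sight is defined by universal algebraic formulas. One might also remark that this corollary is the $\MC$-level shadow of the finer comparison of deformation functors to come (Theorem~\ref{teo Def INFI cil = Def(h,g)}), but for the corollary itself, invoking the preceding display chain suffices. In short, the proof is: "This is exactly the content of the computation preceding the statement; naturality in $A$ is clear."

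\begin{proof}
The computation carried out above shows that, under the identification $\cil[1]^0\otimes m_A=(L^1\otimes m_A)\oplus(N^1\otimes m_A)\oplus(M^0\otimes m_A)$, an element $\gamma=(l,n,m)$ belongs to $\MC^\infty_{\cil^\cdot}(A)$ if and only if
$$
dl+\tfrac12[l,l]=0,\qquad dn+\tfrac12[n,n]=0,\qquad e^m*h(l)=g(n).
$$
Writing $e^p:=e^m$, this is precisely the condition for $(l,n,e^p)$ to lie in $\mc(A)$; hence the assignment $(l,n,m)\mapsto(l,n,e^m)$ is a bijection $\MC^\infty_{\cil^\cdot}(A)\lrg\mc(A)$. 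Since this bijection is given by a universal algebraic formula in the components, and since the differentials, brackets, and the series defining the gauge action and the higher multiplications are all natural, the bijection commutes with the maps induced by any morphism $A\lrg B$ in $\Art$. Therefore it defines an isomorphism of functors $\mc\cong\MC^\infty_{\cil^\cdot}$.
\end{proof}
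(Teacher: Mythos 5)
Your proof is correct and follows exactly the paper's approach: the corollary is stated as an immediate consequence of the preceding computation, which reduces the $L_\infty$ Maurer--Cartan equation on $\cil^\cdot$ to the three defining conditions of $\mc$. Your added remarks on the identification of underlying sets and on functoriality in $A$ are accurate but routine, and the paper leaves them implicit.
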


\smallskip

Given an $L_\infty$-algebra $T$, two elements $x$ and $y
\in\MC_T^\infty(A)$ are \emph{homotopy equivalent} if there exists
$g[s,ds]\in \MC_{T[s,ds]}^\infty(A)$ with $g(0)=x$ and $g(1)=y$.
Then, the \emph{deformation functor} $\Def^\infty_T$ associated
with $T$ is $\MC_{T  }^\infty/\text{homotopy}$. Moreover, if two
$L_\infty$-algebras are quasi-isomorphic, then there exists an
isomorphism between the associated deformation functors \cite{bib
fukaya}. In particular, Corollary~\ref{cor esist strutt L infin su
Cil da transf Hil} implies the following result.

\begin{cor}\label{cor DEF inf cono iso DEF hil}
$\Def^\infty_{\cil^\cdot}\cong \Def_{\hil}$.
\end{cor}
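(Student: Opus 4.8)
The plan is to derive this corollary directly from the two results already assembled in this section. By Corollary~\ref{cor esist strutt L infin su Cil da transf Hil}, the $L_\infty$-algebras $\hil$ (with its DGLA structure $(q_1,q_2,0,\ldots)$, viewed as an $L_\infty$-algebra by Remark~\ref{oss DGLA is L infinito}) and $(\cil^\cdot,\langle\ \rangle_1,\langle\ \rangle_2,\ldots)$ are quasi-isomorphic as $L_\infty$-algebras, via the $L_\infty$-quasi-isomorphism $\imath_\infty$ extending $\imath$. So the first step is simply to invoke the general principle, cited from \cite{bib fukaya}, that a quasi-isomorphism of $L_\infty$-algebras induces an isomorphism of the associated deformation functors $\Def^\infty_{(-)}=\MC^\infty_{(-)}/\text{homotopy}$. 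Applying this to $\imath_\infty$ yields $\Def^\infty_{\cil^\cdot}\cong\Def^\infty_{\hil}$.

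The second step is to identify $\Def^\infty_{\hil}$ with $\Def_{\hil}$. Here I would note that the $L_\infty$ structure on $\hil$ is the one coming from its genuine DGLA structure via Remark~\ref{oss DGLA is L infinito}, with all higher brackets vanishing. For such an $L_\infty$-algebra the $L_\infty$ Maurer-Cartan equation $\sum_{j\ge1}\langle\gamma^{\odot j}\rangle_j/j!=0$ reduces to the ordinary Maurer-Cartan equation $d\gamma+\tfrac12[\gamma,\gamma]=0$ (only the $j=1$ and $j=2$ terms survive), so $\MC^\infty_{\hil}=\MC_{\hil}$. Likewise, $\hil[s,ds]$ inherits the same zero-higher-bracket structure, so homotopy equivalence of Maurer-Cartan elements in the $L_\infty$ sense coincides with the usual notion of homotopy (i.e. gauge) equivalence for the DGLA $\hil$; hence $\Def^\infty_{\hil}=\Def_{\hil}$. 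Composing the two identifications gives $\Def^\infty_{\cil^\cdot}\cong\Def_{\hil}$, which is the claim.

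I expect the only genuinely delicate point to be the second step, namely making precise that the $L_\infty$-notion of homotopy equivalence for a DGLA (regarded as an $L_\infty$-algebra with trivial higher operations) agrees with the classical gauge equivalence underlying $\Def_{\hil}$. This is a standard fact — it is exactly the statement that the functor $\Def^\infty$ restricts to the usual $\Def$ on DGLAs — but it is worth citing explicitly (e.g. from \cite{bib fukaya} or \cite[Chapter~IX]{bib manRENDICONTi}) rather than leaving it implicit. Everything else is a matter of unwinding definitions, since the comparison of Maurer-Cartan equations was already carried out above in the computation leading to Corollary~\ref{cor MC(h,g)=MC infini cil}.
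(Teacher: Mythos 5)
Your proof is correct and follows essentially the same route as the paper: the paper likewise deduces the corollary from Corollary~\ref{cor esist strutt L infin su Cil da transf Hil} together with the quasi-isomorphism invariance of $\Def^\infty$ cited from \cite{bib fukaya}. The only difference is that you spell out the identification $\Def^\infty_{\hil}=\Def_{\hil}$ (equality of Maurer--Cartan equations and of homotopy versus gauge equivalence for a DGLA viewed as an $L_\infty$-algebra with vanishing higher brackets), which the paper leaves implicit; flagging and citing that standard fact is a reasonable refinement, not a divergence.
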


Next, we prove that there is also an isomorphism between  the
functor $\Def_ {(h,g)} $ and the deformation functor $
\Def^\infty_{C_{(h,g)}^\cdot}$. First of all, we state two useful
lemmas. We will use the notation $e_a(p(t,dt)):=p(a)$, for each
$p(t,dt) \in M[t,dt]$ and $a \in \C$.

\begin{lem}\label{lemma MC_L[t,dt]=e^g(t)*x}
Let $M$ be a differential graded Lie algebra and let $A\in \Art$.
Then, for any $x$ in $\MC_M(A)$ and any $g(t)\in M^0[t] \otimes
m_A$, with $g(0)=0$, the element $e^{g(t)}  * x$ is an element of
$\MC_{M[t,dt]}(A)$. Moreover all the elements of
$\MC_{M[t,dt]}(A)$ are obtained in this way.
\end{lem}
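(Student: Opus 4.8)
The plan is to prove Lemma~\ref{lemma MC_L[t,dt]=e^g(t)*x} in two halves: first that $e^{g(t)}*x$ solves the Maurer-Cartan equation in $M[t,dt]$ whenever $x$ solves it in $M$ and $g(t)\in M^0[t]\otimes m_A$ with $g(0)=0$, and second that every element of $\MC_{M[t,dt]}(A)$ arises this way. For the first half I would recall that the gauge action of $\exp(M^0\otimes m_A)$ on $\MC_M(A)$ always lands back in $\MC_M(A)$; the only subtlety here is that the coefficient ring is now $A[t]$ (polynomials in $t$) and the DGLA is $M[t,dt]=M\otimes\C[t,dt]$, whose differential is $d_M\otimes 1 + 1\otimes d_{dR}$, where $d_{dR}$ sends $t\mapsto dt$. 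So I would work in the DGLA $M[t,dt]$ with the element $g(t)\in M[t,dt]^0\otimes m_A$ — note that $g(t)$ has no $dt$-component, so it is a genuine degree-zero element — and simply apply the general formula for the gauge action: $e^{g(t)}*x = x + \sum_{k\ge 0}\frac{[g(t),-]^k}{(k+1)!}([g(t),x]-d_{M[t,dt]}g(t))$. Since $x\in M^1\otimes m_A\subset M[t,dt]^1\otimes m_A$ is already Maurer-Cartan for the sub-DGLA $M\subset M[t,dt]$, the general fact ``gauge action preserves Maurer-Cartan solutions'' gives that $e^{g(t)}*x\in\MC_{M[t,dt]}(A)$ directly, without further computation.

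\textbf{The surjectivity half is the real content.} Given an arbitrary $\xi\in\MC_{M[t,dt]}(A)$, write $\xi = \xi_0(t) + \xi_1(t)\,dt$ with $\xi_0(t)\in M^1[t]\otimes m_A$ and $\xi_1(t)\in M^0[t]\otimes m_A$. Setting $x:=\xi_0(0)=e_0(\xi)$, the evaluation $e_0:M[t,dt]\to M$ being a morphism of DGLAs shows $x\in\MC_M(A)$. I would then look for $g(t)\in M^0[t]\otimes m_A$ with $g(0)=0$ such that $e^{g(t)}*x=\xi$. The natural strategy is to read off a differential equation for $g(t)$ by comparing the $dt$-components: the $dt$-part of the Maurer-Cartan equation $d_{M[t,dt]}\xi + \tfrac12[\xi,\xi]=0$ forces $\dot\xi_0(t) = d_M\xi_1(t)+[\xi_0(t),\xi_1(t)]$, i.e. $\xi_0(t)$ evolves in $t$ by the ``infinitesimal gauge action'' generated by $\xi_1(t)$. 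This is precisely the flow equation whose solution, with initial condition $\xi_0(0)=x$, is the path $t\mapsto e^{g(t)}*x$ where $g(t)$ is determined by solving $\dot g(t) = $ (some universal expression in $g(t)$ and $\xi_1(t)$, obtained by differentiating the gauge formula); since everything happens in $m_A$, which is nilpotent, all the relevant power series are finite sums and the ODE is solved by a terminating recursion, giving a polynomial $g(t)$ with $g(0)=0$. Uniqueness of solutions to this nilpotent ODE then guarantees $e^{g(t)}*x=\xi$ as full elements of $M[t,dt]$, not merely after evaluation.

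\textbf{The main obstacle} I anticipate is the bookkeeping in the surjectivity argument: making precise the claim that the path $t\mapsto e^{g(t)}*x$ is characterized by the flow equation $\dot\xi_0(t)=d_M\xi_1(t)+[\xi_0(t),\xi_1(t)]$ together with the initial condition, and correctly extracting the polynomial $g(t)$ from $\xi_1(t)$ by the nilpotent recursion. One clean way to organize this is by induction on the nilpotency length of $m_A$ (or, equivalently, along a small extension $0\to J\to \tilde A\to A\to 0$): assume the statement over $A$, lift a given $\tilde\xi\in\MC_{M[t,dt]}(\tilde A)$ to a known form $e^{g(t)}*x$ modulo $J$, and then correct $g(t)$ and $x$ by elements of $J$, where the obstruction-theoretic terms are linear and the equation becomes an ordinary (linear) differential equation in the finite-dimensional sense, solvable explicitly. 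I would also cite the analogous statement for $\MC_M$-paths if it appears in Manetti's references, since this lemma is a standard ``$M[t,dt]$ computes homotopies of Maurer--Cartan elements'' fact; but the self-contained proof via the flow equation is short enough to include. The routine verification that the resulting $g(t)$ indeed has no $dt$-component and satisfies $g(0)=0$ I would leave as a direct check.
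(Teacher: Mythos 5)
Your proposal is correct in substance, but note that the paper itself does not prove this lemma: it simply cites \cite[Corollary~7.2]{bib fiorenz-Manet}. What you have written is essentially a reconstruction of the argument in that reference, so the comparison is really between your sketch and the standard proof rather than with anything in this paper. The first half is exactly as easy as you say: $g(t)$ is a degree-zero element of $M[t,dt]\otimes m_A$ and $x\in\MC_M(A)\subset\MC_{M[t,dt]}(A)$, so gauge-invariance of the Maurer--Cartan locus does all the work (and $g(0)=0$ plays no role there). For surjectivity your two-step organization is the right one, and the only place where the bookkeeping must be done in the correct order is this: writing $\xi=\xi_0(t)+\xi_1(t)\,dt$, you must \emph{first} produce $g(t)$ with $g(0)=0$ whose gauge transform of $x=\xi_0(0)$ has $dt$-component equal to $\xi_1(t)\,dt$ --- i.e. solve the nilpotent ODE $\dot g=-\frac{\ad_g}{e^{\ad_g}-1}\xi_1$, which terminates by induction on powers of $m_A$ and yields a polynomial --- and only \emph{then} invoke uniqueness for the flow equation $\dot\eta_0=d_M\xi_1+[\eta_0,\xi_1]$, $\eta_0(0)=x$, to conclude that the $dt$-free components agree (the difference $u$ satisfies $\dot u=[u,\xi_1]$, $u(0)=0$, hence vanishes by the same $m_A$-adic induction). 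You state both ingredients, so there is no gap; if you include this proof you should still cite Fiorenza--Manetti, since it is their Corollary~7.2 verbatim.
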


\begin{proof}
See \cite[Corollary~7.2]{bib fiorenz-Manet}.
\end{proof}

\begin{lem}\label{lemma stab p^0 is irrelevant}
Let $x(t,dt) \in \MC_{M[t,dt]}(A)$, $\mu(t,dt)  \in
M[t,dt]^0\otimes m_A$, such that $\mu(0)=0$, and
$$
e^{\mu(t,dt) } *x(t,dt)=x(t,dt).
$$
Then, $e^{\mu(1)} \in Stab_A(x(1))$, i.e., there  exists $C\in
M^{-1}$ such that $\mu(1)=d_M C+ [x(1),C]$, where $d_M$ is the
differential in $M$.
\end{lem}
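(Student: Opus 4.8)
The plan is to reduce first to the case in which the path $x(t,dt)$ is constant in $t$, and then to read the conclusion off the $dt$-component of the linearized stabilizer equation. For the reduction I would apply Lemma~\ref{lemma MC_L[t,dt]=e^g(t)*x}, which writes $x(t,dt)=e^{g(t)}*x_0$ for some $g(t)\in M^0[t]\otimes m_A$ with $g(0)=0$, where $x_0=e_0(x(t,dt))\in\MC_M(A)$. Using $e^{a}*(e^{b}*y)=(e^{a}e^{b})*y$, the hypothesis $e^{\mu(t,dt)}*x(t,dt)=x(t,dt)$ rewrites as $e^{\nu(t,dt)}*x_0=x_0$, where $e^{\nu(t,dt)}:=e^{-g(t)}e^{\mu(t,dt)}e^{g(t)}$ is the Baker--Campbell--Hausdorff product in the pro-nilpotent Lie algebra $M[t,dt]^0\otimes m_A$; since $e_0$ is a morphism of DGLAs and $g(0)=\mu(0)=0$, one still has $\nu(0)=0$. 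Granting the constant case, i.e.\ $e^{\nu(1)}\in Stab_A(x_0)$, the conjugation identity $e^{a}Stab_A(x_0)e^{-a}=Stab_A(e^{a}*x_0)$ recalled above, applied with $a=g(1)\in M^0\otimes m_A$, gives $e^{\mu(1)}=e^{g(1)}e^{\nu(1)}e^{-g(1)}\in Stab_A(e^{g(1)}*x_0)=Stab_A(x(1))$, which is the assertion.

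It thus remains to treat a \emph{constant} $x=x(t,dt)\in\MC_M(A)$. Since $m_A$ is nilpotent, the operator $\sum_{n\ge0}[\mu(t,dt),-]^{n}/(n+1)!$ is invertible, so $e^{\mu(t,dt)}*x=x$ is equivalent to $d\mu(t,dt)=[\mu(t,dt),x]$, where $d$ is the total differential of $M[t,dt]$. Writing $\mu(t,dt)=a(t)+b(t)\,dt$ with $a(t)\in M^0[t]\otimes m_A$ and $b(t)\in M^{-1}[t]\otimes m_A$, and keeping track of the Koszul signs in the differential and bracket of $M[t,dt]=M\otimes\C[t,dt]$, the $\C[t]\,dt$-component of this identity reads $\partial a/\partial t=-\bigl((d_Mb)(t)+[b(t),x]\bigr)$; the remaining $\C[t]$-component $(d_Ma)(t)=[a(t),x]$ plays no role. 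Since $\mu(1)=a(1)$ and $a(0)=\mu(0)=0$, integrating over $[0,1]$ and using that $d_M$ and $[-,x]$ commute with $\int_0^1(\cdot)\,dt$ gives $\mu(1)=d_MC+[x,C]$ with $C:=-\int_0^1 b(t)\,dt\in M^{-1}\otimes m_A$. Hence $e^{\mu(1)}\in Stab_A(x)$ by the very definition of the irrelevant stabilizer, which completes the reduced case and therefore the proof.

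The one genuinely structural point is that conjugation by $e^{g(t)}$ preserves both the stabilizer condition and the normalization at $t=0$, which is precisely what legitimizes the reduction to a constant path; everything else is the routine but sign-sensitive manipulation isolating the $dt$-component and the elementary integration. I expect the Koszul-sign bookkeeping in $M\otimes\C[t,dt]$ to be the part most prone to slips.
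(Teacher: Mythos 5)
Your proposal is correct and follows essentially the same route as the paper: both reduce to a constant Maurer--Cartan element via Lemma~\ref{lemma MC_L[t,dt]=e^g(t)*x} and the BCH-conjugation $e^{-g(t)}e^{\mu(t,dt)}e^{g(t)}$ together with the conjugation property of irrelevant stabilizers, and both handle the constant case by splitting $\mu(t,dt)$ into its $\C[t]$ and $\C[t]\,dt$ components and integrating the $dt$-component over $[0,1]$. The only (cosmetic) difference is the order of the two steps, and your explicit remark that $C$ lives in $M^{-1}\otimes m_A$ is in fact the correct reading of the paper's slightly imprecise ``$C\in M^{-1}$''.
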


\begin{oss}
The proof of the case $x(t,dt)=0$ is contained in
\cite[Theorem~7.4]{bib fiorenz-Manet}.
\end{oss}

\begin{proof}
First, suppose that $x(t,dt)=x\in M$. Thus,  we have $e^{\mu(t,dt)
} *x=x$ and so
\begin{equation}\label{equaz lemma dmu+[x,mu]=0}
d(\mu(t,dt))+[x,\mu(t,dt)]=0\in (M[t,dt])^1 \otimes m_A.
\end{equation}
If we write $\mu(t,dt)=\mu^0(t) +\mu^{-1}(t) dt$, with $\mu^0(t)
\in M[t]^0$ and $\mu^{-1}(t) \in M[t]^{-1}$, then
Equation~(\ref{equaz lemma dmu+[x,mu]=0}) becomes
$$
\dot \mu^0(t)dt+d_M\mu^0(t)+d_M\mu^{-1}(t)dt  +[x, \mu^0(t)] +[x,
\mu^{-1}(t)]dt=0,
$$
or, equivalently,
$$
\begin{cases}
\dot{\mu}^0+d_M\mu^{-1}(t)+[x, \mu^{-1}(t)] =0\\
d_M\mu^0(t) +[x, \mu^0(t)]=0.
\end{cases}
$$
Thus, for any fixed $\mu^{-1}(t)$, we get
$$
\mu^0(t)=
$$
$$
-\int_0^t \,  d_M\mu^{-1}(s)ds -\int_0^t  \,
[x,\mu^{-1}(s)]ds=-d_M\int_0^t \,  \mu^{-1}(s)ds -[x,\int_0^t \,
\mu^{-1}(s)]ds.
$$
Let $C=-\int_0^1 \,  \mu^{-1}(s)ds \in M^{-1}$.  Therefore,
$\mu(1)=\mu^0(1)=d_M C+ [x,C]$ or, analogously, $e^{\mu(1)} \in
Stab_A(x)$. This concludes the proof in the case $x(t,dt)=x$.

Next, consider the general case of a  Maurer-Cartan element
$x(t,dt)\in \MC_{M[t,dt]}(A)$. Lemma~\ref{lemma
MC_L[t,dt]=e^g(t)*x} implies the existence of $g(t)\in M^0[t]$
such that $g(0)=0$ and
$$
x(t,dt)=e^{g(t)}*x(0).
$$
Therefore, the hypothesis  $ e^{\mu(t,dt)  } *x(t,dt)=x(t,dt)$,
implies
$$
e^{-g(t)} e^{\mu(t,dt) }e^{g(t)}*x(0)=x(0).
$$
Let $q(t,dt)= -g(t)\bullet \mu(t,dt)  \bullet g(t)$. If we write
$\mu(t,dt)=\mu^0(t) +\mu^{-1}(t)dt$ and $q(t,dt)=q^0(t)
+q^{-1}(t)dt$, then $q^0(t)=-g(t)\bullet \mu^0(t) \bullet g(t)$.

By the previous consideration  applied to $e^{q(t,dt)}*x_0=x_0$,
we conclude that $e^{q(1)} \in Stab_A(x(0))$.

The main property of irrelevant stabilizer asserts that
$$\forall\  a \in M^0\otimes A  \qquad e^a
Stab_A(x)e^{-a}=Stab_A(y), \quad \mbox{ with } \quad y=e^a*x.
$$
Therefore, $e^{\mu(1)}=e^{g(1)} e^{q(1)}e^{-g(1)}\in Stab_A(y)$,
with $y=e^{g(1)}*x(0)=x(1)$.

Equivalently, there exists $C\in M^{-1}$ such that $\mu(1)=d_M
 C+ [x(1),C]$.

\end{proof}

\begin{teo}\label{teo Def INFI cil  = Def(h,g)}
$$
\Def^\infty_{C_{(h,g)}^\cdot}=
\frac{\MC^\infty_{C_{(h,g)}^\cdot}}{\text{homotopy}} \simeq
\frac{\MC_{(h,g)}}{\text{gauge}}=\Def_ {(h,g)}.
$$
\end{teo}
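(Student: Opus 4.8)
The plan is to build an explicit bijection between $\MC^\infty_{\cil^\cdot}(A)$ and $\MC_{(h,g)}(A)$ and then show it descends to the quotients, i.e.\ that the homotopy relation on the $L_\infty$ side matches the gauge relation $\sim$ of Definition~\ref{defin funtor Def_(h,g) non esteso}. The first half is already essentially done: by Corollary~\ref{cor MC(h,g)=MC infini cil} an element $\gamma=(l,n,m)\in\MC^\infty_{\cil^\cdot}(A)$ is precisely a triple with $dl+\frac12[l,l]=0$, $dn+\frac12[n,n]=0$, and $e^m*h(l)=g(n)$; matching $(l,n,m)$ with $(x,y,e^p):=(l,n,e^m)$ identifies it with an element of $\MC_{(h,g)}(A)$. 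So I would declare the map $\Phi(A)\colon(l,n,m)\mapsto(l,n,e^m)$ and note it is a natural bijection on Maurer-Cartan sets.

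Next I would unwind the homotopy relation. Two elements $\gamma_0,\gamma_1\in\MC^\infty_{\cil^\cdot}(A)$ are homotopy equivalent iff there is $\Gamma(s,ds)\in\MC^\infty_{\cil^\cdot[s,ds]}(A)$ with $e_0\Gamma=\gamma_0$, $e_1\Gamma=\gamma_1$. Writing $\Gamma=(l(s,ds),n(s,ds),m(s,ds))$ and using the componentwise form of the $L_\infty$ structure, the first two components say $l(s,ds)\in\MC_{L[s,ds]}(A)$ and $n(s,ds)\in\MC_{N[s,ds]}(A)$; by the standard homotopy-equals-gauge fact for DGLAs (Lemma~\ref{lemma MC_L[t,dt]=e^g(t)*x} applied to $L$ and to $N$) this produces $a\in L^0\otimes m_A$ and $b\in N^0\otimes m_A$ with $l_1=e^a*l_0$, $n_1=e^b*n_0$. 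The third component is the Maurer-Cartan equation in $M[s,ds]$ for (a gauge-transform of) the path $t\mapsto e^{m(s)}*h(l(s))$, and evaluating Lemma~\ref{lemma stab p^0 is irrelevant} at the two endpoints should yield exactly an element $e^T\in Stab_A(g(n_1))$ of the form $T=dc+[g(n_1),c]$, together with the relation $e^{p_1}=e^{g(b)}e^Te^{p_0}e^{-h(a)}$ — which is the definition of $\sim$. Conversely, given a gauge equivalence $(x_0,y_0,e^{p_0})\sim(x_1,y_1,e^{p_1})$ with data $(a,b,c)$, I would write down an explicit path: use $e^{sa}*l_0$ and $e^{sb}*n_0$ in the first two slots, and in the $M$-slot interpolate between $e^{p_0}*h(l_0)$ and $e^{p_1}*h(l_1)$ through the gauge transformations $e^{sg(b)}$, $e^{-sh(a)}$ and the irrelevant factor $e^{sdc+s[g(n),c]}$ (or rather its path version), checking via Lemma~\ref{lemma MC_L[t,dt]=e^g(t)*x} that this lands in $\MC^\infty_{\cil^\cdot[s,ds]}(A)$.

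The main obstacle I anticipate is the third (the $M$-valued) component of the homotopy: the $L_\infty$ structure on $\cil^\cdot$ has nontrivial higher brackets $\langle m^{\odot j}\odot l\rangle_{j+1}$ and $\langle m^{\odot j}\odot n\rangle_{j+1}$ (the Bernoulli terms from Lemmas~\ref{lemma prodotti cono <m m m m l>} and \ref{lemma prodotti cono <m m m m n>}), so the Maurer-Cartan equation for a path is not simply a gauge equation in a DGLA — it only becomes one after applying the operator $\frac{e^{\ad_m}-id}{\ad_m}$ as in the computation preceding Corollary~\ref{cor MC(h,g)=MC infini cil}. I would handle this by performing that same resummation \emph{uniformly over the parameter $s$}, i.e.\ in $M[s,ds]$ rather than in $M$, so that the path-level third equation turns into the statement that $s\mapsto g(n(s))$ and $s\mapsto e^{m(s,ds)}*h(l(s))$ agree as Maurer-Cartan elements of $M[s,ds]$; then Lemma~\ref{lemma stab p^0 is irrelevant} is exactly the tool that extracts the irrelevant-stabilizer element at $s=1$. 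The remaining verifications (naturality in $A$, that the constructed path has the right endpoints, that the correspondence is a well-defined map of functors both ways) are routine bookkeeping with the gauge-action formula $e^a*x=x+\sum_{n\ge0}\frac{[a,-]^n}{(n+1)!}([a,x]-da)$ and the BCH product $\bullet$, and I would only sketch them.
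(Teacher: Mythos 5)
Your proposal follows essentially the same route as the paper: identify $\MC^\infty_{\cil^\cdot}(A)$ with $\MC_{(h,g)}(A)$ via the Bernoulli resummation carried out uniformly in the path parameter (i.e.\ in $M[s,ds]$), construct the explicit path $e^{sa}*l_0$, $e^{sb}*n_0$, $g(sb)\bullet T(s)\bullet m_0\bullet(-h(sa))$ for the direction gauge $\Rightarrow$ homotopy, and combine Lemma~\ref{lemma MC_L[t,dt]=e^g(t)*x} with Lemma~\ref{lemma stab p^0 is irrelevant} for the converse. The one detail you elide is that Lemma~\ref{lemma stab p^0 is irrelevant} produces an element of $Stab_A(g(n_1))$ sitting to the \emph{left} of $e^{g(\nu(1))}$, which must still be conjugated into $Stab_A(g(n_0))$ (using the displayed conjugation property of irrelevant stabilizers) to land in the exact normal form $e^{g(b)}e^{T}e^{p_0}e^{-h(a)}$ with $T=dc+[g(n_0),c]$ required by the definition of $\sim$.
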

\begin{proof}
This theorem is a generalization of \cite[Theorem~7.4]{bib
fiorenz-Manet}.

First, we show that \emph{gauge equivalence implies homotopy
equivalence}. Let $(l_0,n_0,m_0)$ and $(l_1,n_1,m_1)$  in
$\MC_{(h,g)}(A)$, for some $A \in \Art$; in particular,
$e^{m_0}*h(l_0)=g(n_0)$ and $e^{m_1}*h(l_1)=g(n_1)$.

Suppose that they are gauge equivalent elements, i.e., there exist
$a \in L^0 \otimes m_A$, $b \in N^0 \otimes m_A $ and $c \in
M^{-1}\otimes m_A $ such that
$$
l_1=e^a *l_0, \qquad n_1=e^b*n_0, \qquad
 m_1 = g(b) \bullet T \bullet m_0\bullet (-h(a)),
$$
with $ T=dc+[g(n_0),c]$ (and so $e^T \in Stab_A(g(n_0))$).

Let $\tilde{l}(s,ds)=e^{sa}*l_0\in L[s,ds]\otimes m_A$, $\tilde{n}
(s,ds) = e^{sb}*n_0\in N[s,ds]\otimes m_A$ and $T(s)=d(sc)+
[g(n_0),sc]$. By Lemma~\ref{lemma MC_L[t,dt]=e^g(t)*x},
$\tilde{l}$ and $\tilde{n}$ satisfy the Maurer-Cartan equation and
$h(\tilde{l})=e^{h(sa)}*h(l_0)$ and $g(\tilde{n})=
e^{g(sb)}*g(n_0)$.

Define $\tilde{m}=g(sb)\bullet T(s) \bullet m_0 \bullet (-h(sa))$;
then,
$$
e^{\tilde{m}}*h(\tilde{l}) =e^{g(sb)\bullet T(s) \bullet  m_0
\bullet (-h(sa))}*h(\tilde{l})=
$$
$$
e^{g(sb)\bullet T(s) \bullet m_0}*h(l_0)=e^{g(sb)\bullet T(s)}
* g(n_0)=g(\tilde{n}).
$$
Therefore, $(\tilde{l},\tilde{n},\tilde{m})\in
\MC^\infty_{\cil^\cdot[t,dt]}(A) $. Moreover, $\tilde{l}(0)=l_0$,
$\tilde{l}(1)=l_1$, $\tilde{n}(0)=n_0$, $\tilde{n}(1)=n_1$,
$\tilde{m}(0)=m_0$ and $\tilde{m}(1)=g(b)\bullet T \bullet
m_0\bullet (-h(a))=m_1$, i.e., $(l_0,n_0,m_0)$ and $(l_1,n_1,m_1)$
are homotopy equivalent.

Next, we show that \emph{homotopy equivalence implies  gauge
equivalence}. Let $(l_0,n_0,m_0)$ and $(l_1,n_1,m_1)$ be homotopy
equivalent elements in $\MC^\infty_{\cil^\cdot}(A)$. Thus, there
exists $(\tilde{l},\tilde{n},\tilde{m})\in
\MC^\infty_{\cil^\cdot[t,dt] }(A)$ such that
$$
d\tilde{l}+\dfrac{1}{2}[\tilde{l},\tilde{l}]=0, \qquad
d\tilde{n}+\dfrac{1}{2}[\tilde{n},\tilde{n}]=0, \qquad
g(\tilde{n})= e^{\tilde{m}}* h(\tilde{l})
$$
and
$$
\begin{cases}
(\tilde{l}(0),\tilde{n}(0),\tilde{m}(0))=(l_0,n_0,m_0)\\
(\tilde{l}(1),\tilde{n}(1),\tilde{m}(1))=(l_1,n_1,m_1).
\end{cases}
$$
In particular, $\tilde{l}$ and $\tilde{n}$ satisfy the
Mauer-Cartan equation  in $L[t,dt]$ and $N[t,dt]$, respectively.
Applying Lemma~\ref{lemma MC_L[t,dt]=e^g(t)*x}, there exist degree
zero elements $\lambda(t) \in L[t]\otimes m_A$ and $\nu(t) \in
N[t]\otimes m_A$,  such that $\lambda(0)=0$,
$\tilde{l}=e^{\lambda}* l_0$, $\nu(0)=0$ and $\tilde{n}=e^{\nu}* n
_0$.

This implies that $h(\tilde{l})=e^{h(\lambda)}* h(l_0)$,
$g(\tilde{n})=e^{g(\nu)}* g(n _0)$ and, for $s=1$,  that
$$
l_1=e^{\lambda(1)}* l_0 \qquad \mbox{  and } \qquad
n_1=e^{\nu(1)}*n_0.
$$
Moreover, we note that
$$
e^{g(\nu)  \bullet m_0\bullet (-h(\lambda))}*
h(\tilde{l})=e^{g(\nu)
   \bullet m_0}* h(l _0) =e^ {g(\nu)}* g(n _0)=
g(\tilde{n}) .
$$
Let $\mu=\tilde{m}\bullet h(\lambda)\bullet (-m_0) \bullet
(-g(\nu)) \in M^0[t,dt]\otimes m_A$ so that $\tilde{m}=\mu\bullet
g(\nu) \bullet m_0 \bullet (-h(\lambda))$. Then $\mu(0)=0$ and
$e^{\mu}*g(\tilde{n})=g(\tilde{n}) $.

Therefore, by  Lemma~\ref{lemma stab p^0 is irrelevant}, there
exists $C\in M^{-1}$, such that   $\mu(1)=  d_M(C)+[ g(n_1),C]$
and so  $e^{\mu(1)} \in Stab_A(g(n_1))$. Then, $m_1=
\tilde{m}(1)=\mu(1)\bullet g(\nu(1)) \bullet m_0\bullet
(-h(\lambda_1))$. Applying the  main property of the irrelevant
stabilizers, there exists  $C' \in M^{-1}$ such that
$$
\mu(1)\bullet g(\nu(1))=g(\nu(1))\bullet T',
$$
with $T'=dC' +[g(n_0),C']$ and   $e^{T'} \in Stab_A(g(n_0))$.
Thus,  $m_1=  g(\nu(1)) \bullet T' \bullet m_0\bullet
(-h(\lambda_1))$.

In conclusion,  if $(l_0,n_0,m_0)$ and $(l_1,n_1,m_1)$ are
homotopy equivalent, then there exists $(\lambda(1), \nu(1))\in
(L^0\otimes m_A)  \times (N^0\otimes m_A)$  and  $T'=dC'
+[g(n_0),C']  $, for some $C' \in M^{-1}$, such that
$$
\begin{cases}
l_1=e^{\lambda(1)} * l_0\\
n_1=e^{\nu(1)}*n_0 \\
m_1=  g(\nu(1))  \bullet T' \bullet  m_0 \bullet (-h(\lambda(1))),
\end{cases}
$$
i.e.,  $(l_0,n_0,m_0)$ and $(l_1,n_1,m_1)$ are gauge equivalent.

\end{proof}

\begin{cor}\label{cor L infini Def_(h,g)iso Def_H(h,g)}
$\Def_{(h,g)}\cong \Def_{\hil} $.
\end{cor}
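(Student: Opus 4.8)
The plan is to obtain the statement formally, by composing the two identifications established immediately above. On one side, Theorem~\ref{teo Def INFI cil  = Def(h,g)} asserts that the deformation functor $\Def^\infty_{\cil^\cdot}$ of the transferred $L_\infty$ structure on the cone coincides with $\Def_{(h,g)}$; this is where all the real work sits, since proving it required matching, in both directions, homotopy equivalence of Maurer--Cartan elements on $\cil^\cdot[t,dt]$ with gauge equivalence of triples in $\mc$. On the other side, Corollary~\ref{cor DEF inf cono iso DEF hil} gives $\Def^\infty_{\cil^\cdot}\cong\Def_{\hil}$: indeed Corollary~\ref{cor esist strutt L infin su Cil da transf Hil} supplies an $L_\infty$-quasi-isomorphism $\imath_\infty$ between $(\hil,q_1,q_2,0,\dots)$ and $(\cil^\cdot,\langle\ \rangle_1,\langle\ \rangle_2,\dots)$, and quasi-isomorphic $L_\infty$-algebras have isomorphic deformation functors. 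Concatenating the two identifications yields
\[
\Def_{(h,g)}\;=\;\Def^\infty_{\cil^\cdot}\;\cong\;\Def_{\hil},
\]
which is the assertion.

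The only point that deserves a word is the identification of $\Def_{\hil}$ --- the deformation functor of $\hil$ viewed as a DGLA --- with the $L_\infty$ deformation functor attached to $\hil$ equipped with vanishing higher brackets $q_k=0$ for $k\ge3$; this identification is precisely what is already built into the statement of Corollary~\ref{cor DEF inf cono iso DEF hil}. It holds because, for a DGLA, the $L_\infty$ Maurer--Cartan equation $\sum_{j\ge1}\langle\gamma^{\odot j}\rangle_j/j!=0$ is literally $d\gamma+\frac{1}{2}[\gamma,\gamma]=0$, and, by Lemma~\ref{lemma MC_L[t,dt]=e^g(t)*x} and Lemma~\ref{lemma stab p^0 is irrelevant}, homotopy equivalence in $\hil[s,ds]$ reduces to gauge equivalence exactly as in the specialisation of Theorem~\ref{teo Def INFI cil  = Def(h,g)} to a single DGLA. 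Consequently there is no genuine obstacle here: the corollary is a two-line deduction from the preceding theorem and corollary, and I would present it as such.
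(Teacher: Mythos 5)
Your proposal is correct and matches the paper's (implicit) argument exactly: the corollary is obtained by composing Theorem~\ref{teo Def INFI cil  = Def(h,g)} with Corollary~\ref{cor DEF inf cono iso DEF hil}, and your additional remark that the DGLA deformation functor of $\hil$ agrees with the $L_\infty$ one is the standard fact already absorbed into that corollary. Nothing further is needed.
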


Therefore, by suitably choosing  $L,M,$ and $h,g$, we have an
explicit description of the DGLA that controls the infinitesimal
deformations of holomorphic maps.

\begin{teo}\label{teo esiste hil governa DEF(f)}
Let $f:X \lrg Y$ be a  holomorphic map. Then, the DGLA $H_{(h,g)}$
associated with the morphisms $h:L \hookrightarrow KS_{X\times Y}$
and $g=(p^*,q^*):KS_X \times KS_Y \lrg KS_{X\times Y}$ (introduced
in Section~\ref{sezio def funct of pair morfisms}) controls
infinitesimal deformations of $f$, i.e.,
$$
\Def_{\hil} \cong \Def(f).
$$
\end{teo}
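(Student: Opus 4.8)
The plan is simply to chain together the two main isomorphisms already established in the paper. The statement is the specialization of the general machinery to the concrete pair of DGLA morphisms $h:L\hookrightarrow KS_{X\times Y}$ and $g=(p^*,q^*):KS_X\times KS_Y\lrg KS_{X\times Y}$ of diagram~\eqref{equa diagrL-> KS(XxY) <-KS(X)xKS(Y)}, so the first thing I would record is that this pair does fall under the hypotheses used throughout: $h$ is an inclusion, hence injective, and $M=KS_{X\times Y}$ is concentrated in non-negative degrees. Consequently the construction of Example~\ref{exe stru L infin su hil} applies verbatim to this pair and produces precisely the DGLA $\hil$ appearing in the statement.

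Next I would invoke Corollary~\ref{cor L infini Def_(h,g)iso Def_H(h,g)}, which is valid for an arbitrary pair of morphisms of DGLAs, to obtain an isomorphism of functors $\Def_{\hil}\cong\Def_{(h,g)}$. Recall that this corollary is itself the composite of Corollary~\ref{cor DEF inf cono iso DEF hil} (the homotopy transfer of the $L_\infty$ structure from $\hil$ to the cone $\cil^\cdot$, Section~\ref{sezio l infinito cono}) with Theorem~\ref{teo Def INFI cil  = Def(h,g)} (the identification $\Def^\infty_{\cil^\cdot}=\Def_{(h,g)}$), so for the present $(h,g)$ nothing new needs to be verified here.

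Then I would apply Theorem~\ref{teo Def_(h,g)  Def (f)}, which for exactly this choice of $L,M,h,g$ supplies the isomorphism $\gamma:\Def_{(h,g)}\lrg\Def(f)$, $(n,e^m)\mapsto\shI_A(n,e^m)$, between deformations of the pair and deformations of the holomorphic ideal sheaf of the graph of $f$ inside $Z=X\times Y$. Composing the two isomorphisms yields
$$
\Def_{\hil}\;\cong\;\Def_{(h,g)}\;\cong\;\Def(f),
$$
which is the asserted statement.

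I do not expect any real obstacle: the theorem is a formal consequence of work already carried out, and the one verification needed, namely that the geometric pair $(h,g)$ meets the standing hypotheses of Remark~\ref{oss DEF_(h,g) con h iniettivo} and of Theorem~\ref{teo Def_(h,g)  Def (f)}, is immediate from the definitions of $\mathcal{L}$, of $KS_{X\times Y}$ and of the inclusion $h$. The step that is genuinely substantial in spirit (though not in this short argument) is the chain of identifications behind Corollary~\ref{cor L infini Def_(h,g)iso Def_H(h,g)}, which rests on the explicit $L_\infty$ structure on $\cil^\cdot$ and on Lemma~\ref{lemma stab p^0 is irrelevant} about irrelevant stabilizers; but all of that is in place before the statement.
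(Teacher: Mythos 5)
Your proposal is correct and follows exactly the paper's own proof, which simply composes Theorem~\ref{teo Def_(h,g)  Def (f)} with Corollary~\ref{cor L infini Def_(h,g)iso Def_H(h,g)}. The extra remarks you make about the geometric pair $(h,g)$ satisfying the standing hypotheses ($h$ injective, $M$ in non-negative degrees) are accurate but not needed beyond what the two cited results already require.
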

\begin{proof}
It is sufficient to apply  Theorem~\ref{teo Def_(h,g) Def (f)} and
Corollary~\ref{cor L infini Def_(h,g)iso Def_H(h,g)}.
\end{proof}

\end{document}